\documentclass[12pt]{amsart}
\usepackage[utf8]{inputenc}
\usepackage{amsmath}
\usepackage{amssymb}
\usepackage{amsthm}
\usepackage[
backend=biber,
style=alphabetic,
]{biblatex}
\usepackage{chngcntr}
\usepackage{enumitem}
\usepackage{extpfeil}
\usepackage[margin=2cm]{geometry}
\usepackage{hyperref}
\usepackage{parskip}
\usepackage{mathrsfs}
\usepackage{mathtools}
\usepackage{tikz}
\usepackage{tikz-cd}

\hypersetup{
  colorlinks=true,
  linkcolor=blue,
  citecolor=blue
}

\newcommand{\Z}{\mathbb Z}
\newcommand{\C}{\mathbb C}
\newcommand{\F}{\mathbb F}
\newcommand{\PP}{\mathbb P}
\newcommand{\E}{\mathbb E}

\theoremstyle{plain}
\newtheorem{theorem}{Theorem}
\newtheorem{lemma}[theorem]{Lemma}
\newtheorem{corollary}[theorem]{Corollary}
\newtheorem{proposition}[theorem]{Proposition}

\theoremstyle{definition}
\newtheorem{definition}[theorem]{Definition}
\newtheorem{example}[theorem]{Example}
\newtheorem{remark}[theorem]{Remark}

\numberwithin{theorem}{section}

\newcommand{\Hom}{\operatorname{Hom}}
\newcommand{\Sur}{\operatorname{Sur}}
\newcommand{\Aut}{\operatorname{Aut}}
\newcommand{\coker}{\operatorname{coker}}

\newcommand{\details}[1]{}
\newcommand{\intuition}[1]{}

\title[Cokernels of Nearly-symmetric matrices]{Nearly-Symmetric Matrices in the Cohen-Lenstra Universality Class}

\author{Elia Gorokhovsky}
\address{Harvard University, Cambridge, USA}

\date{\today}

\begin{document}

\begin{abstract}
In this paper, we study cokernels of random $n\times n$ matrices over $\Z$ with symmetry conditions determined by fixed alternating bilinear forms on $\Z^n$. These include perturbations of random symmetric matrices at a very small (but unbounded with $n$) number of entries. We show that, subject to fairly weak conditions on the distributions of the entries, the distribution of these cokernels converges weakly to the Cohen-Lenstra distribution, which is the limiting distribution of cokernels of random matrices with no symmetry constraints. This result demonstrates that the cokernel distributions of symmetric matrices are quite sensitive to small perturbations of the symmetry conditions.
\end{abstract}

\maketitle

\section{Introduction}

We study the limiting distribution of cokernels of large random matrices with integer entries. The motivation for this problem comes from arithmetic statistics. Cohen and Lenstra \cite{cohenlenstra1984} conjectured, based on numerical evidence, a distribution for the $p$-Sylow subgroups of class groups of imaginary quadratic number fields for odd primes $p$: for a finite abelian $p$-group $G$, the \textit{Cohen-Lenstra random group} $\Gamma_{CL}^{(p)}$ satisfies \[
\PP[\Gamma_{CL}^{(p)} \cong G] \propto \frac{1}{|\Aut(G)|}.
\]
Later, Friedman and Washington \cite{FriedmanWashington+1989+227+239} observed that this distribution, now called the Cohen-Lenstra distribution, arises as the limiting distribution of cokernels of matrices of the form $I - M$, where $M$ is Haar-random in $\operatorname{GL}_{2g}(\Z_p)$, the group of invertible $2g \times 2g$ matrices over the $p$-adic integers, as $g \to \infty$.

Wood \cite{wood2019matrices} and Nguyen and Wood \cite{nguyenRandomIntegralMatrices2022} found that the Cohen-Lenstra distribution is \textit{universal}, in the sense that one obtains the same limiting distribution from ($p$-parts of) cokernels of $n \times n$ integer matrices as $n \to \infty$, as long as the entries of the matrices are independent and satisfy a weak regularity condition called \textit{$\varepsilon$-balancedness} (Definition~\ref{def:eps-bal}). Subsequent work has provided many more examples of square integer or $p$-adic random matrix ensembles, even with some dependence among the entries, whose cokernels converge to the Cohen-Lenstra distribution in the same sense, such as Laplacians of Erd\H{o}s-R\'enyi random digraphs \cite{Meszaros2020}, random band matrices with wide enough bands \cite{mészáros2024phasetransitioncokernelsrandom}, random block matrices with weak regularity conditions on the blocks \cite{gorokhovsky2024timeinhomogeneousrandomwalksfinite}, matrices with some entries fixed at zero \cite{kang2024randompadicmatricesfixed}, and certain sparse matrices \cite{meszarosCohenLenstraDistribution2025}.

At the same time, there are numerous examples of random matrix models whose cokernels do not converge to the Cohen-Lenstra distribution, such as symmetric matrices \cite{wood2017sandpile, nguyenLocalGlobalUniversality2025}, alternating matrices \cite{nguyenLocalGlobalUniversality2025}, products of independent random matrices \cite{nguyen2024products}, quadratic polynomials in Haar-random $p$-adic matrices \cite{cheong2022polynomials}, and matrices with too many entries fixed to zero \cite{kang2024randompadicmatricesfixed, mészáros2024phasetransitioncokernelsrandom}. It is natural to ask what kind of dependence among the entries of random matrices prevents their cokernels from converging to the Cohen-Lenstra distribution.

Sometimes, as in the case of symmetric matrices, the differences in limiting distributions are explained by the existence of extra structure (e.g., perfect pairings) on cokernels of certain matrices. When keeping track of these structures in addition to the group structure on the cokernel, one gets distributions more closely resembling Cohen-Lenstra \cite{hodgesDistributionSandpileGroups2024}. 

We study a new random matrix model given by matrices with the following symmetry constraints:

\begin{definition}
Let $C$ be an alternating $n\times n$ matrix over a ring $R$. We say an $n\times n$ matrix $X$ over $R$ is \textit{$C$-symmetric} if $X - X^\intercal = C$. In particular, a matrix is $0$-symmetric if and only if it is symmetric.
\end{definition}

We will give some examples of interesting random $C$-symmetric matrices later in the introduction and in more detail in Section~\ref{sect:examples}.

Our main theorem shows that the limiting distribution of ensembles of random $C$-symmetric integer matrices for ``most'' choices of $C$ is the Cohen-Lenstra distribution, in the following sense:

\begin{theorem}\label{thm:intro-result}
Let $\varepsilon > 0$. For each $n$, let $C_n$ be a deterministic $n\times n$ alternating matrix over $\Z_p$. Let $X_n$ be a random $C_n$-symmetric matrix over $\Z_p$ with independent $\varepsilon$-balanced entries on and above the diagonal. Assume that we have \[
\lim_{n\to\infty} \operatorname{rank}(C_n\mod{p}) = \infty.
\]
Then for any finite abelian $p$-group $G$, we have \[
\lim_{n\to\infty} \PP[\coker(X_n) \cong G] = \frac{1}{|\Aut(G)|} \prod_{k=1}^\infty (1 - p^{-k})
\]
In other words, the distribution of $\coker(X_n)$ converges to the Cohen-Lenstra distribution.
\end{theorem}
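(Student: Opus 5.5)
We will use the moment method of Wood. Since the Cohen--Lenstra distribution is determined by its moments $\E[\#\Sur(\Gamma_{CL}^{(p)},G)]=1$ for every finite abelian $p$-group $G$, and these moments grow slowly enough that Wood's uniqueness/robustness theorem applies, it suffices to show that
\[
\lim_{n\to\infty}\E[\#\Sur(\coker X_n, G)] = 1
\]
for each fixed $G$.

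Write $\#\Sur(\coker X_n,G)=\sum_{F\in\Sur(\Z_p^n,G)}\mathbf 1[FX_n=0]$. The problem is then to show that $\sum_F \PP[FX_n=0]\to1$. Fix $a$ with $p^aG=0$ and reduce everything mod $p^a$. The event $FX=0$ means that for every $\phi\in\Hom(G,\Z/p^a)$ and every $v$ we have $\phi F X v=0$. By Fourier analysis,
\[
\PP[FX=0]=|G|^{-n}\sum_{\psi}\E[\zeta^{\psi(FX)}],
\]
where $\psi$ ranges over $\Hom(\Hom(\Z^n,G),\Q/\Z)\cong G^{\vee n}$. We encode $\psi$ as an $n\times n$-pairing: $\psi(FX)=\sum_{i,j}\langle F e_i, \sigma_j\rangle X_{ij}$ with $\sigma\in (G^\vee)^n$. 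Substituting $X_{ji}=X_{ij}-C_{ij}$ for $j<i$ gives
\[
\psi(FX)=\sum_{i\le j}\bigl(\langle F e_i,\sigma_j\rangle+\langle F e_j,\sigma_i\rangle\bigr)X_{ij}\;-\;\sum_{i<j}C_{ij}\langle Fe_j,\sigma_i\rangle,
\]
with the diagonal term counted once. The independent entries then yield a product of characteristic functions, multiplied by a deterministic phase $\zeta^{-\langle C, F,\sigma\rangle}$. This is exactly the symmetric-matrix computation of Wood's sandpile paper, except for that extra phase.

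The key steps, in order, are as follows.

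\textbf{Step 1.} Following Wood, restrict to ``codes'' $F$, meaning surjections that remain surjective after deleting any $\delta n$ coordinates. Non-codes contribute $o(1)$ by the standard $\varepsilon$-balanced bounds for symmetric matrices. These bounds are absolute-value bounds and are insensitive to the phase.

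\textbf{Step 2.} For codes, the terms with $\psi$ whose symmetrized coefficients $\langle Fe_i,\sigma_j\rangle+\langle Fe_j,\sigma_i\rangle$ are nonzero at many positions are exponentially small by $\varepsilon$-balancedness. The surviving $\psi$ are those where this symmetric bilinear pairing on $G$ essentially vanishes, i.e.\ $\sigma=\phi\circ F$ for a homomorphism $\phi:G\to G^\vee$ whose associated pairing on $G$ is alternating. In the symmetric case these contribute a factor $|\wedge^2 G|$-type count, which is what makes the symmetric moments differ from $1$.

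\textbf{Step 3.} This is the crucial step. For such $\phi$ the phase becomes $\zeta^{-\sum_{i<j}C_{ij}\langle Fe_j,\phi Fe_i\rangle}$, which is a character evaluated on $F^{\intercal}CF$ paired with $\phi$. Summing over codes $F$, we need to show that this phase averages to zero for every alternating $\phi\ne0$. With this cancellation, only $\phi=0$ survives and the total is $\sum_F |G|^{-n}(1+o(1))\to1$.

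We expect Step 3 to be the main obstacle. Since $C\bmod p$ has rank $r_n\to\infty$, choose a basis in which $C\bmod p$ contains $r_n/2$ hyperbolic blocks. For uniformly random $F$ (codes are almost all of $\Hom(\Z^n,G)$), the pushforward $F^{\intercal}CF\in\wedge^2$-valued pairing on $G^\vee$ becomes equidistributed as $r_n\to\infty$. The mechanism is that each hyperbolic pair contributes an independent random term $x\wedge y$ with $x,y$ uniform in $G$, and a random walk on the finite group $\wedge^2G$ with these steps mixes, because the steps generate the group. The discrepancy from uniformity decays like $c^{r_n}$. Hence $\sum_F\zeta^{\langle \phi, F^\intercal CF\rangle}=o(|G|^n)$ for each nonzero alternating $\phi$. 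Here we pass from $C\bmod p$ to $C\bmod p^a$: a character nontrivial mod $p^a$ restricts nontrivially to the appropriate $p$-torsion layer, and the hyperbolic blocks mod $p$ lift to unimodular blocks mod $p^a$.

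Finally, we must check that the error terms from Steps 1--2 remain uniform after inserting the phase. They do, because every bound used there is on absolute values.
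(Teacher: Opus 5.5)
Your proposal is correct. Steps 1 and 2 are the paper's argument in Fourier form, but Step 3 computes the isotropy probability by a genuinely different method.

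\textbf{Steps 1--2.} The paper shifts $X$ by a fixed $C$-symmetric $M$ with $F\circ M=0$ (Lemma~\ref{lem:reduction}). It then quotes Wood's symmetric-case estimates, including the depth bound under assumption (A), inside Lemma~\ref{lem:moment-from-isotropy}. You instead carry the deterministic phase through Wood's Fourier expansion. The two are equivalent, because the code and non-code bounds are uniform in the offset.

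\textbf{Step 3.} Your computation shows that, for codes $F$, the surviving characters sum to $|\wedge^2G|\cdot\mathbf 1[\omega_F=0]$, where $\omega_F=\sum_{i<j}C_{ij}\,Fe_i\wedge Fe_j\in\wedge^2G$. This gives a single intrinsic isotropy criterion valid for every $G$ at once. You then use the symplectic normal form of $C$ over $\Z/p^a\Z$, with $r_n/2$ unimodular blocks, to write $\omega_F$ as a sum of $r_n/2$ i.i.d.\ copies of $x\wedge y$ plus an independent term.

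The mixing can be made fully explicit. For a nonzero alternating $B$ with associated $\phi_B\colon G\to G^\vee$, averaging over $x$ first gives $\E[\zeta^{B(x\wedge y)}]=\PP[\phi_B(y)=0]=|\operatorname{im}\phi_B|^{-1}\le p^{-1}$. So every nontrivial Fourier coefficient of $\omega_F$ is at most $p^{-r_n/2}$. This settles the point that ``the steps generate the group'' needs laziness (here $0$ is in the support) to imply mixing. It also makes your remark about ``$p$-torsion layers'' unnecessary.

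\textbf{The paper's route.} It characterizes isotropy, via Smith normal form of $F$, by the layered conditions $F_k\circ C_k\circ F_k^*=0$ (Proposition~\ref{prop:isotropy-condition}). It proves near-uniformity of $M^\intercal CM$ for free targets by column-by-column conditioning, using only Smith normal form of $C$ (Proposition~\ref{prop:random-alternating-form}). It then chains conditional probabilities across the layers with Lemma~\ref{lem:err-combining}.

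\textbf{Comparison.} Your route is shorter and gives the clean rate $p^{-r_n/2}$ with no threshold $n-g\ge N$. The paper's route avoids the symplectic normal form, works over any finite elementary divisor ring, and is set up for composite $a$ and a nontrivial gcd $h$, yielding the $|\wedge^2G[h]|$ moments of Theorem~\ref{thm:intro-moments}. Your argument would adapt to that setting too, since $\omega_F=h\,\omega'_F$ with $\omega'_F$ equidistributed.
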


Theorem~\ref{thm:intro-result} is a consequence of the following stronger result (see Remark~\ref{rmk:gen-and-rank} for the relation between $\operatorname{rank}(C_n\mod{p})$ and $n - g_n$), which is equivalent to Theorem~\ref{thm:moment-convergence}:
\begin{theorem}\label{thm:intro-moments}
Let $\varepsilon > 0$. For each $n$, let $C_n$ be a deterministic $n\times n$ alternating matrix over $R = \Z$ (respectively, $R = \Z/a\Z$ for $a > 0$, or $R = \Z_p$) and let $h$ be an integer dividing all entries of $C_n$ for every $n$. Suppose the cokernel of $C_n/h$ can be generated by $g_n$ elements over $R$. Let $X_n$ be a random $C_n$-symmetric matrix over $R$ with independent $\varepsilon$-balanced entries on and above the diagonal. Assume we have \[
\lim_{n\to\infty} n - g_n = \infty
\]
For any finite abelian group $G$ (respectively, finite abelian group with exponent dividing $a$, or finite abelian $p$-group) we have \[
\lim_{n\to\infty} \E[\#\Sur(\coker(X_n), G)] = |\wedge^2 G[h]|.
\]
where $G[h]$ is the subgroup of $G$ consisting of elements of order dividing $h$.
\end{theorem}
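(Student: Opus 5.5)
Following the moment method of Wood \cite{wood2017sandpile, wood2019matrices}, the plan is to write $\E[\#\Sur(\coker(X_n),G)] = \sum_{F\in \Sur(R^n,G)} \PP[F X_n = 0]$. Here $F X_n=0$ means that $X_n$, viewed as a map $R^n\to R^n$, has image in $\ker F$. Let $e_i$ be the standard basis and set $v_i = F(e_i)\in G$. Since $X$ is $C$-symmetric, the condition $FX=0$ says $\sum_i X_{ij} v_i = 0$ for all $j$. Pair this with an arbitrary $\phi\in\Hom(G,\Q/\Z)$ (or the appropriate dual for each ring $R$). By Fourier analysis,
\[
\PP[FX=0]=\frac{1}{|G|^n}\sum_{\psi}\E\Big[\exp\Big(2\pi i\sum_{i,j}X_{ij}\psi_j(v_i)\Big)\Big],
\]
where $\psi=(\psi_j)\in \Hom(G,\Q/\Z)^n$. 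Substituting $X_{ji}=X_{ij}-C_{ij}$ for $i<j$ splits the exponent into a deterministic part $-\sum_{i<j}C_{ij}\psi_i(v_j)$ and independent random terms $X_{ij}(\psi_j(v_i)+\psi_i(v_j))$ for $i\le j$. The main term comes from those $\psi$ for which every random term vanishes, i.e.\ the symmetric pairing $\psi_j(v_i)+\psi_i(v_j)=0$ for all $i\le j$. As in the symmetric case, such $\psi$ correspond (for $F$ a code) to $\psi = \delta\circ F$ with $\delta\in\Hom(G,G^\vee)$ antisymmetric, together with the diagonal conditions. The surviving sum is then $\sum_\delta \exp(-2\pi i\,\sum_{i<j}C_{ij}\,\delta(v_i)(v_j))$, which is a character sum over alternating forms evaluated against the pushforward of $C$ under $F$.

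The key steps, in order, are as follows.

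\textbf{Step 1 (codes vs.\ non-codes).} Define $F$ to be a code of distance $w$ if $F$ restricted to any $n-w$ coordinates is still surjective, and use the depth stratification of \cite{wood2019matrices, nguyenRandomIntegralMatrices2022}. I would show that non-codes contribute $o(1)$ using the standard counting bound on non-code $F$ and an $\varepsilon$-balanced bound $\PP[FX=0]\le \exp(-\varepsilon w)|G|^{-n}$-type estimates. These are adapted from the symmetric case, where the $C$ term only changes phases and so does not hurt upper bounds.

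\textbf{Step 2 (codes).} For codes, the non-main Fourier terms are $O(e^{-\varepsilon w})$ by $\varepsilon$-balancedness, exactly as in Wood's symmetric argument. So for a code $F$,
\[
\PP[FX=0]\approx |G|^{-n}\sum_{\delta}\chi_\delta(F_*C),
\]
where $F_*C=\sum_{i<j}C_{ij}\,v_i\wedge v_j\in\wedge^2G$.

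\textbf{Step 3 (averaging over $F$).} Summing over codes $F$, the character sum over antisymmetric $\delta$ detects whether $F_*C=0$ in $\wedge^2 G$ modulo the diagonal contribution. This reduces the problem to computing $\sum_F \#\{\delta : \chi_\delta(F_*C)=1\}/|G|^n$. Since $C=h C'$, the pairing $\chi_\delta(F_*C)$ depends only on $h\delta$, which is trivial exactly when $\delta$ kills the $h$-multiples. I expect the answer to be: the characters $\delta$ factoring through the quotient dual to $\wedge^2 G[h]$ contribute $1$ identically, while all other $\delta$ average to $o(1)$ over $F$. This gives the limit $|\wedge^2 G[h]|$, after combining with the symmetric-case count, which supplies $|\mathrm{Sym}^2|$-type factors that cancel against the diagonal.

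The main obstacle is this last averaging. One must show that for any $\delta$ not killed by $h$, the sum $\sum_F \chi_\delta(hF_*C')/|G|^n$ tends to $0$. This is where $n-g_n\to\infty$ enters. Put $C'$ in Smith-type (symplectic) normal form. Then $C'$ has at least $(n-g_n)/2$ unit hyperbolic blocks, and on each block $F$ contributes $\delta(hv)(hv')$ with $v,v'$ uniform and independent. Each nontrivial such character has average of modulus $\le 1 - c(G)$ or exactly $0$, so the product over blocks decays as $(n-g_n)\to\infty$. I would carry this out first for uniform $F$ and then handle the restriction to codes by noting that non-codes have negligible proportion.
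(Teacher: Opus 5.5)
Your proposal is correct, but it reaches the key count by a different route from the paper.

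\textbf{How the paper argues.} The paper never Fourier-expands the twisted problem. For a code $F$ it picks a deterministic $C$-symmetric $M$ with $F\circ M=0$, when one exists, and applies Wood's symmetric lemma to $X-M$ (Lemma~\ref{lem:reduction}). So the main term is $|\wedge^2G||G|^{-n}$ times the indicator that $F$ is ``isotropic''. It then characterizes isotropy of a surjection in coordinates adapted to $F$, via Smith normal form and the layered conditions over the $R_k$ (Proposition~\ref{prop:isotropy-condition}). Finally it computes the isotropy probability in two stages. A column-by-column conditioning argument shows $M^\intercal CM$ is nearly uniform (Proposition~\ref{prop:random-alternating-form}). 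A telescoping over the layers $d_{k-1}\mid d_k$ then finishes (Corollary~\ref{cor:isotropy-probability}).

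\textbf{How your route differs.} In your version the $C$-term is only a phase, so for surjective $F$ the main term is
\[
|G|^{-n}\sum_{\tilde\beta\in(\wedge^2G)^\vee}e^{2\pi i\,\tilde\beta(F_*C)}=|\wedge^2G|\,|G|^{-n}\,\mathbf{1}[F_*C=0].
\]
This is the paper's isotropy condition in coordinate-free form. The upper bounds needed for non-codes transfer verbatim, since $|\E\, e^{2\pi i\psi(FX)}|$ is the same as in the symmetric case.

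Averaging over uniform $F$, the $|\wedge^2G[h]|$ characters with $h\tilde\beta=0$ contribute $1$. For $\gamma=h\tilde\beta\neq0$, put $C'=C/h$ in symplectic normal form $\bigoplus_k s_k\left(\begin{smallmatrix}0&1\\-1&0\end{smallmatrix}\right)\oplus 0$. The average then factors as $\prod_k\PP_v[s_k\gamma(v,\cdot)=0]$. Every factor lies in $[0,1]$, and it is at most $1/2$ when $s_k$ is a unit. The product is therefore at most $2^{-(n-g)/2}$.

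\textbf{What each route buys.} Yours gives an explicit error $O(2^{-(n-g)/2})$ with no threshold $N$. It makes the surjective/non-surjective bookkeeping of Remark~\ref{rmk:hom-vs-sur} unnecessary and avoids the conditioning arguments. The paper's route instead produces the standalone near-uniformity statement of Proposition~\ref{prop:random-alternating-form}. It also gives the explicit layered/subspace description of isotropy that Section~\ref{sect:Fp} relies on.

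\textbf{Fixes needed in the write-up.}
\begin{itemize}
\item A unit block contributes the phase $h s_k\,\delta(v)(v')$, with a single factor of $h$, matching your correct earlier remark that only $h\delta$ matters. It is not $\delta(hv)(hv')$: that would introduce $h^2$ and give the wrong count $|\wedge^2G[h^2]|$.
\item No $\mathrm{Sym}^2$-type factors enter anywhere. The alternating $\delta$ already number exactly $|\wedge^2G|$, and the count $|\wedge^2G[h]|$ comes out directly.
\item When $a$ is composite, take the normal form with $s_1\mid s_2\mid\cdots$, or work one prime at a time after CRT. Otherwise ``$\coker(C/h)$ generated by $g$ elements'' does not force $(n-g)/2$ unit blocks: blocks with $s=2$ and $s=3$ over $\Z/6\Z$ give few generators but no units.
\end{itemize}
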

Here $\#\Sur(\coker(X_n), G)$ denotes the number of surjective homomorphisms from $\coker(X_n)$ onto $G$. The averages of this quantity over finite abelian groups $G$ are called the \textit{moments} of the distribution of $\operatorname{coker}(X_n)$ and often provide enough information to recover the distribution uniquely. The moments computed in Theorem~\ref{thm:intro-moments} correspond to the cokernel distribution of a matrix drawn from the Haar measure on the subgroup of matrices over $\widehat{\Z}$ (the profinite completion of the integers) consisting of those matrices which are symmetric after reduction modulo $h$. This follows from the more general universality statement in Example~\ref{ex:symm-mod-h}.

The result of Theorem~\ref{thm:intro-moments} implies Theorem~\ref{thm:intro-result} through the moment method for finite abelian groups, which we review in Section~\ref{sect:background}. The method is somewhat stronger than stated in Theorem~\ref{thm:intro-moments}, in the sense that it also proves that the $p$-Sylow subgroups of $\coker(X_n)$ are asymptotically independent for any finite set of primes $p$, as well as getting versions of Theorem~\ref{thm:intro-result} over other rings like $\Z_p$ and $\Z/a\Z$. This is the same flavor of result as in, e.g., \cite{wood2017sandpile, wood2019matrices}. The distribution with moments given as in Theorem~\ref{thm:intro-moments} also has an explicit formula for the $p$-part, which is substantially more complicated due to the additional parameter $h$ and which can be found in \cite[Proposition 7.1]{sawinConjecturesDistributionsClass2024a} (with $R = \Z_p$ and $s = \varepsilon = 0$; see also \cite[Lemma 7.2]{sawinConjecturesDistributionsClass2024a}).

The $\varepsilon$-balanced condition on the entries asks that the distribution of each entry is not too concentrated modulo any prime (see Definition~\ref{def:eps-bal}). 

The condition on the ranks of the $C_n$ is true for most choices of $C_n$, in the sense that a random alternating matrix over $\F_p$ tends to have large rank as the size of the matrix grows. So, if each $C_n$ is chosen at random, the desired condition on the sequence of ranks of $C_n/h$ modulo $p$ is satisfied with probability 1. We explain this in more detail in Example~\ref{ex:symm-mod-h}. 

We do not determine the limiting cokernel distribution when $C_n$ is close to the zero matrix, but we do observe in Theorem~\ref{thm:not-symmetric} that, in the setting of Theorem~\ref{thm:intro-result}, the limiting distribution of $\coker(X_n)$ agrees with the limiting distribution of cokernels of symmetric matrices if and only if $C_n$ converges to 0 in the $p$-adic sense. This result demonstrates that the distribution of cokernels of symmetric matrices is highly sensitive to small perturbations in the symmetry of the entries. We also observe in Theorem~\ref{thm:not-cl} that if the rank of $C_n/h$ is bounded modulo any prime in the setting of Theorem~\ref{thm:intro-moments}, then one does not obtain the $|\wedge^2 G[h]|$ limiting moments as in the conclusion of that theorem. This demonstrates that the assumption $\lim_{n\to\infty} \operatorname{rank}(C_n \mod{p}) = \infty$ in Theorem~\ref{thm:intro-result} is necessary, and some assumption like $\lim_{n\to\infty} (n - g_n) = \infty$ in Theorem~\ref{thm:intro-moments} is necessary as well.

Over $R = \F_2$, when $C$ is the matrix with ones in every off-diagonal entry, $C$-symmetric matrices appear ``in nature'' as tournament matrices, i.e., adjacency matrices of tournament graphs.

Another natural setting for $C$-symmetric matrices over $\F_2$ comes again from number theory. R\'edei \cite{Rédei1934} observed that the 4-torsion in the class group of a quadratic number field can be studied by means of a matrix whose entries are quadratic residue symbols evaluated at pairs of ramified primes. By quadratic reciprocity, these \textit{R\'edei matrices} are $C$-symmetric, with $C$ having nonzero entries when the corresponding primes are both congruent to 3 modulo 4. Later, Gerth \cite{gerth4classRanksQuadratic1984} used this interpretation to study the distribution of this 4-torsion in families of quadratic number fields with a fixed number of ramified primes. Gerth's argument studied how the rank of a R\'edei matrix changes when a new random row and column are added. See \cite{Koymans2020EffectiveCO} for a proof using this strategy with effective error bounds. 

The author and Liu \cite{gorokhovskyliu4rank} gave an alternate effective computation of the limiting corank distribution for $C$-symmetric matrices over finite fields which is more similar in spirit to the proof here.

This paper is organized as follows. After discussing notation and terminology in Section~\ref{sect:notation}, in Section~\ref{sect:background} we give background on the random abelian groups that appear in this paper, as well as a brief exposition of the general strategy we will use to determine limiting cokernel distributions of our random matrix ensembles. This strategy requires computing the probability that $F\circ X_n = 0$ for every finite abelian group $G$ and map $F\colon \Z^n \to G$. In Section~\ref{sect:reduction}, we introduce a condition on the map $F$ called \textit{isotropy} which allows us to reduce this computation to a result of Wood \cite{wood2017sandpile}. In Section~\ref{sect:Fp}, we describe the isotropy condition in the setting where our matrices have entries in a finite field and use this to study $C$-symmetric matrices when $C$ is close to the zero matrix. In Section~\ref{sect:iso-prob}, we compute the probability that a random map $F\colon \Z^n \to G$ is isotropic and thereby prove our main result. Finally, in Section~\ref{sect:examples}, we give three examples of ensembles of $C$-symmetric matrices to showcase the power of our result.   

\section{Notation and Terminology}\label{sect:notation}

If $G$ is an abelian group and $p$ is a prime, we write $G_p$ for the $p$-Sylow subgroup of $G$. If $h$ is a positive integer, we write $G[h]$ for the set of elements of $G$ of order dividing $h$.

If $G, H$ are groups, we write $\Hom(G, H)$ for the set of homomorphisms from $G$ to $H$, we write $\Sur(G, H)$ for the set of surjective homomorphisms from $G$ to $H$, and we write $\Aut(G)$ for the set of automorphisms of $G$.

All rings are assumed to be commutative, and all nonzero rings are assumed to have 1.

We use $\PP$ to denote probability of an event and $\E$ to denote expectation of a random variable.

When $R$ is a ring and $G$ is an $R$-module, we use $G^* = \Hom(G, R)$ to denote the dual $R$-module. We view the free $R$-module $R^n$ as having the natural basis, so we have a natural isomorphism $R^n \cong (R^n)^* \cong (R^*)^n$ for any $n$. If $F\colon G \to H$ is a map of $R$-modules, we write $F^*\colon H^* \to G^*$ as the dual map.

We view an $n\times n$ matrix $M$ over $R$ as a map $M\colon (R^n)^* \to R^n$. We write $M^\intercal$ for the transpose map $M^*\colon (R^n)^* \to (R^n)^{**} = R^n$, and we write $\operatorname{coker}(M) = R^n/M((R^n)^*)$ for the cokernel.

We denote by $[n]$ the set $\{1, \dots, n\}$.

\section{Background on Cokernel Distributions and the Moment Method}\label{sect:background}

In this section, we give an exposition of the general strategy developed by Wood (which has since become standard) for determining the limiting distribution of cokernels of various random matrix models at finitely many primes. We will also record the results from the literature that we will need for our computations.

The primary tool that makes cokernel distributions tractable is the \textit{moment method}. Suppose that $\Gamma$ is a random group, and $G$ is a fixed group. The quantity $\E[\#\Sur(\Gamma, G)]$, the expected number of surjections from $\Gamma$ to $G$, is called the \textit{$G$-moment} of $\Gamma$.

We say an abelian group $G$ is \textit{finite mod $a$} for a positive integer $a$ if $G \otimes \Z/a\Z$ is a finite abelian group. 

\begin{theorem}[Consequence of {\cite[Theorem 3.1]{wood2019matrices}}, see also \cite{sawin2024momentproblemrandomobjects}]\label{thm:moment-robustness}
Let $a > 0$ be an integer. Let $\Gamma_1, \Gamma_2, \dots$ be a sequence of random abelian groups which are finite mod $a$ and $\Gamma_\infty$ be a random abelian group which is finite mod $a$. Let $A$ be the set of isomorphism classes of finite abelian groups with exponent dividing $a$. If for every $G \in A$ we have \[
\lim_{n\to\infty} \E[\#\Sur(\Gamma_n, G)] = \E[\#\Sur(\Gamma_\infty, G)] \leq |\wedge^2 G| \tag{$*$}
\]
then for every $H \in A$ we have \[
\lim_{n\to\infty} \PP[\Gamma_n \otimes \Z/a\Z \cong H] = \PP[\Gamma_\infty \otimes \Z/a\Z \cong H] \tag{$**$}
\]
\end{theorem}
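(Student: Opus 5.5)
This statement is a consequence of Wood's moment theorem \cite[Theorem 3.1]{wood2019matrices}, so the plan is to reduce to that result rather than reprove uniqueness and robustness from scratch. Wood's theorem is phrased for random finitely generated abelian groups whose moments are indexed by groups $G$ of exponent dividing $a$. Its hypothesis is that the limiting moments $M_G$ are $O(|\wedge^2 G|)$. Its conclusion is that the distributions of $\Gamma_n\otimes\Z/a\Z$ converge to a limit determined uniquely by the $M_G$.

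\textbf{Step 1: the relevant moments only depend on $\Gamma_n \otimes \Z/a\Z$.} For $G$ of exponent dividing $a$, every homomorphism $\Gamma\to G$ factors uniquely through $\Gamma\otimes\Z/a\Z = \Gamma/a\Gamma$, and surjectivity is preserved. Hence
\[
\#\Sur(\Gamma,G)=\#\Sur(\Gamma\otimes\Z/a\Z,G).
\]
So we may replace each $\Gamma_n$ and $\Gamma_\infty$ by $\Gamma_n' = \Gamma_n\otimes\Z/a\Z$ and $\Gamma_\infty'$. These are random finite abelian groups of exponent dividing $a$, i.e. random elements of the countable set $A$, and the hypothesis $(*)$ holds verbatim for them.

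\textbf{Step 2: apply Wood's theorem.} Set $M_G = \E[\#\Sur(\Gamma'_\infty,G)]$. By $(*)$ we have $M_G\le|\wedge^2 G|$, which is exactly the growth bound in \cite[Theorem 3.1]{wood2019matrices}. That theorem yields two conclusions:
\begin{enumerate}[label=(\roman*)]
\item there is at most one distribution $\nu$ on $A$ with moments $M_G$ for all $G\in A$;
\item since $\E[\#\Sur(\Gamma_n',G)]\to M_G$ for every $G\in A$, there is such a $\nu$, and $\PP[\Gamma_n'\cong H]\to\nu(H)$ for all $H\in A$.
\end{enumerate}
The law of $\Gamma'_\infty$ has moments $M_G$ by construction, so uniqueness in (i) forces $\nu$ to be that law. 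This gives $(**)$.

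\textbf{Main obstacle.} The only subtle point is matching the exact form of Wood's hypothesis. If it is stated with finitely generated rather than finite-mod-$a$ groups, or with $M_G = O(|\wedge^2 G|)$ only for $G$ of exponent dividing $a$, Step 1 handles the discrepancy. If one wanted to avoid citing it, the hard work would be the uniqueness and tightness argument. Tightness follows from the moment bound for $G=(\Z/p)^k$, which controls the $p$-rank. Uniqueness follows by inverting the moment system over the finite lattice of quotients, using that $|\wedge^2G|$ grows slowly enough to justify the exchange of limits. This is precisely what \cite{wood2019matrices} and \cite{sawin2024momentproblemrandomobjects} supply, so I would cite them.
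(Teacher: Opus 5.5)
Your proposal is correct and matches the paper, which gives no argument beyond citing Wood's Theorem 3.1. Your Step 1 is exactly the bridge from the ``finite mod $a$'' hypothesis to Wood's finitely generated setting: replacing $\Gamma_n$ and $\Gamma_\infty$ by their finite reductions $\Gamma\otimes\Z/a\Z$ leaves all $G$-moments for $G$ of exponent dividing $a$ unchanged, and your uniqueness step is equivalent to applying the comparison clause of Wood's theorem to the constant sequence $Y_n=\Gamma_\infty\otimes\Z/a\Z$, which yields $(**)$ directly.
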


In \cite[Theorem 3.1]{wood2019matrices}, Wood also proves the existence of a random group with given well-behaved moments. In \cite{sawin2024momentproblemrandomobjects}, Sawin and Wood prove a much more general version of this theorem for random objects in a diamond category and give a formula for the distribution of a random object with given moments. In this paper, the moments we will find already match with moments from a known random group, so we will not need the existence statement.

\begin{remark}\label{rmk:weak-convergence}
We say an abelian group $G$ is \textit{well-behaved} if it is finite mod $a$ for every positive integer $a$. Well-behaved abelian groups include all finitely generated abelian groups. One can put a topology on the set of isomorphism classes of well-behaved abelian groups by specifying a basis of open sets of the form \[
U_{a, H} = \{G \text{ well-behaved abelian} \mid G \otimes \Z/a\Z \cong H\}\]
for positive integers $a$ and finite abelian groups $H$ of exponent dividing $a$. A version of this topology on sets of isomorphism classes of profinite groups was studied in \cite{liu2018freegroup}. The condition $(**)$ for all choices of $a$ and $H$ is equivalent to the assertion that the (distributions of) $\Gamma_n$ converge weakly to (the distribution of) $\Gamma_\infty$ with respect to this topology. For this reason, we will sometimes say that a sequence of random groups weakly converges to a distribution to convey that $(**)$ is satisfied for all $a, H$. Theorem~\ref{thm:moment-robustness} says that to prove weak convergence, it suffices to check $(*)$ for all choices of $a$ and $G$ of exponent dividing $a$. See \cite[Section 3.5]{gorokhovsky2024timeinhomogeneousrandomwalksfinite} for more details.
\end{remark}

We now define two random groups that will be of relevance to this paper. 

Let $u \geq 0$ be an integer. For a prime $p$, define a random group $\Gamma_{CL}^{(p, u)}$ by \[
\PP[\Gamma_{CL}^{(p, u)} \cong G] = \frac{1}{|G|^u|\Aut(G)|}\prod_{k=1}^\infty(1 - p^{-k-u})
\]
for any finite abelian $p$-group $G$, and define \[
\Gamma_{CL}^{(u)} \coloneqq \prod_{p\text{ prime}} \Gamma_{CL}^{(p, u)}
\]
\details{Note that $\Gamma_{CL}^{(u)}$ is well-behaved because tensoring with $\Z/a\Z$ kills all terms in the product not indexed by primes dividing $a$.} Cohen and Lenstra \cite{cohenlenstra1984} conjectured that the random group $\Gamma_{CL}^{(p, 0)}$ describes the distribution of $p$-parts of class groups of imaginary quadratic number fields, and the random group $\Gamma_{CL}^{(p, 1)}$ describes the distribution of class groups of real quadratic number fields. Taking the product over all primes $p$ to define $\Gamma_{CL}^{(u)}$ corresponds to the assumption that the $p$-parts of these class groups behave ``independently'' for different primes $p$.

For a prime $p$, define a random group $\Gamma_{\text{sandpile}}^{(p)}$ by \begin{align*}
\PP[&\Gamma_{\text{sandpile}}^{(p)} \cong G] \\
&= \frac{\#\{\text{symmetric, bilinear, perfect }\phi\colon G \times G \to \C^*\}}{|G||\Aut(G)|}\prod_{k=1}^\infty (1 - p^{1 - 2k})
\end{align*}
for finite abelian $p$-groups $G$, and define \[
\Gamma_{\text{sandpile}} \coloneqq \prod_{p\text{ prime}} \Gamma_{\text{sandpile}}^p
\]
Wood \cite{wood2017sandpile} showed that $\Gamma_{\text{sandpile}}^{(p)}$ is the limiting distribution of the $p$-parts of sandpile groups of quite general Erd\H{o}s-R\'enyi random graphs (i.e., the cokernels of the reduced Laplacians of such graphs). The term counting perfect pairings on $G$ is related to the fact that the torsion part of the cokernel of a symmetric matrix carries a natural perfect pairing. Hodges \cite{hodgesDistributionSandpileGroups2024} showed that Wood's computation can be refined: if one instead keeps track of both the sandpile group and its duality pairing, one gets a Cohen-Lenstra-like distribution on isomorphism classes of groups-with-pairing.

\begin{lemma}[{\cite[Lemma 3.2]{wood2019matrices}, \cite[Theorem 8.3, Corollary 9.2]{wood2017sandpile}}, see also {\cite[Theorem 2]{clancyCohenLenstraHeuristic2015}}]\label{lem:known-moments}
For a finite abelian group $G$, we have \[
\E[\#\Sur(\Gamma_{CL}^{(u)}, G)] = |G|^{-u}
\]
and \[
\E[\#\Sur(\Gamma_{\text{sandpile}}, G)] = |\wedge^2 G|
\]
\end{lemma}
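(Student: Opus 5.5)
This lemma only records known moments, so the plan is to reduce each formula to the product structure of $\Gamma_{CL}^{(u)}$ and $\Gamma_{\text{sandpile}}$ and then invoke the cited local computations.

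\emph{Reduction to $p$-groups.} Write $G=\prod_{p\mid |G|} G_p$. Since $G_p$ is a $p$-group, any homomorphism from $\Gamma=\prod_q \Gamma^{(q)}$ to $G_p$ kills every factor $\Gamma^{(q)}$ with $q\neq p$. A map $\Gamma\to G$ is surjective if and only if each of its components onto $G_p$ is surjective. This gives a bijection
\[
\Sur(\Gamma,G)\cong\prod_{p}\Sur(\Gamma^{(p)},G_p),
\]
valid for $\Gamma=\Gamma_{CL}^{(u)}$ and for $\Gamma=\Gamma_{\text{sandpile}}$. The factors $\Gamma^{(p)}$ are independent by definition of the product, so the expectation factors as $\prod_p \E[\#\Sur(\Gamma^{(p)},G_p)]$. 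Both targets are multiplicative over Sylow subgroups: clearly $|G|^{-u}=\prod_p|G_p|^{-u}$, and $\wedge^2 G\cong\bigoplus_p\wedge^2 G_p$, since $G_p\otimes G_q=0$ for $p\ne q$. It therefore suffices to treat a single finite abelian $p$-group $G$.

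\emph{Cohen--Lenstra case.} The identity $\sum_{H}\#\Sur(H,G)\,|H|^{-u}|\Aut(H)|^{-1}=|G|^{-u}\prod_k(1-p^{-k-u})^{-1}$ is the standard computation in \cite[Lemma 3.2]{wood2019matrices}. To reprove it, group surjections $H\to G$ by their kernel $K$; the number of pairs $(H, H\twoheadrightarrow G)$ weighted by $1/|\Aut H|$ equals the weighted count of extensions of $G$ by $K$. Summing over $K$ gives $|G|^{-u}$ times the total mass $\sum_K |K|^{-u}/|\Aut K|$. The normalization constant cancels this mass, yielding $|G|^{-u}$.

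\emph{Sandpile case.} Here $\E[\#\Sur(\Gamma_{\text{sandpile}}^{(p)},G)]=|\wedge^2 G|$ is exactly \cite[Theorem 8.3, Corollary 9.2]{wood2017sandpile}, with a direct verification in \cite[Theorem 2]{clancyCohenLenstraHeuristic2015}. The sum over $H$ involves counting symmetric perfect pairings on $H$, and this weighted sum does not telescope as easily as the Cohen--Lenstra one. This is the main obstacle, so I will cite it rather than reprove it. If a self-contained argument were needed, I would start from the fact that the sandpile distribution is the limit of cokernels of uniform random symmetric matrices over $\Z_p$. For $F\in\Sur(\Z_p^n,G)$, the probability that $FX=0$ is $|G|^{-n}|\wedge^2 G|$, because of the constraint coming from symmetry. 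Summing over $F$ then gives moments tending to $|\wedge^2G|$. The remaining step, passing from these limiting moments to moments of the limiting distribution, needs a uniform-integrability estimate, and that estimate is where the real work lies.
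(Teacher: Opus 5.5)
Your proposal is correct and matches the paper's treatment. The paper gives no argument of its own and simply defers to the cited computations of Wood and Clancy--Kaplan--Leake--Payne--Wood, which is also what you ultimately do for the sandpile case. Your Sylow reduction and your extension-counting sketch for the Cohen--Lenstra case are valid additions. Two points there should be made explicit:
\begin{itemize}
\item A homomorphism to $G_p$ kills all of $\prod_{q\neq p}\Gamma^{(q)}$, not just each factor separately, because that infinite product is $p$-divisible.
\item Each kernel $K$ contributes exactly $|G|^{-u}|K|^{-u}/|\Aut K|$ only because $|\operatorname{Ext}^1(G,K)|=|\Hom(G,K)|$ for finite abelian groups; you use this fact without stating it.
\end{itemize}
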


Wood and others have shown that these moments arise from matrix models with fairly weak conditions on the entries, which we now define.

\begin{definition}\label{def:eps-bal}
Let $N$ be a random variable valued in a ring $R$ and let $\varepsilon > 0$. We say $N$ is \textit{$\varepsilon$-balanced} if for any maximal ideal $\mathfrak p$ of $R$, and for any residue class $r \in R/\mathfrak p$, we have \[
\PP[N \equiv r \pmod{\mathfrak p}] \leq 1 - \varepsilon
\]
\end{definition}

We will say a random matrix with entries in $R$ is $\varepsilon$-balanced if its entries are independent and $\varepsilon$-balanced. We will say a $\varepsilon$-balanced ($C$-)symmetric matrix with entries in $R$ is a random matrix whose entries above and on the diagonal are independent and $\varepsilon$-balanced. Note that, in a regrettable abuse of notation, an $\varepsilon$-balanced ($C$-)symmetric matrix is never $\varepsilon$-balanced. In this paper, we restrict ourselves to the cases $R = \Z$, $R = \Z_p$ for $p$ prime, and $R = \Z/a\Z$ for $a$ a positive integer. 

\begin{theorem}[{\cite[Theorem 2.9]{wood2019matrices}}]\label{thm:iid-moments}
Let $\varepsilon > 0$ and $G$ be a finite abelian group with an $R$-module structure for $R = \Z$, $R = \Z_p$, or $R = \Z/a\Z$. For each natural number $n$, let $X_n$ be an $\varepsilon$-balanced $n\times(n+u)$ matrix with entries in $R$. There are $c, K > 0$ depending only on $\varepsilon$ and $G$ such that \[
\left|\E[\#\Sur(\coker(X_n), G)] - |G|^{-u}\right| \leq Ke^{-cn}
\]
In particular, $\coker(X_n)$ converges weakly to $\Gamma_{CL}^{(u)}$.
\end{theorem}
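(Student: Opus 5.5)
We plan to follow Wood's argument. Write $X = X_n$ and let $x_1, \dots, x_{n+u}$ be its columns. A surjection $\coker(X) \to G$ is the same thing as a surjection $F\colon R^n \to G$ with $F \circ X = 0$. Hence
\[
\E[\#\Sur(\coker(X), G)] = \sum_{F \in \Sur(R^n, G)} \PP[FX = 0] = \sum_{F} \prod_{j=1}^{n+u} \PP[F x_j = 0],
\]
where the product uses independence of the columns. The target value $|G|^{-u}$ is what we would get if $F x_j$ were exactly uniform on $G$ for every $F$: each term would be $|G|^{-(n+u)}$, and there are about $|G|^n$ surjections. So the proof splits the sum over $F$ into ``good'' maps, where $Fx_j$ is nearly uniform, and ``bad'' maps, which are few enough to contribute negligibly. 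Throughout we may assume $R = \Z/a\Z$ with $a$ the exponent of $G$, since $F$ factors through reduction mod $a$ and $\varepsilon$-balancedness is preserved.

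\textbf{Step 1 (codes).} Fix a small $\delta > 0$. Call $F$ a \emph{code} if, for every $\sigma \subseteq [n]$ with $|\sigma| < \delta n$, the restriction of $F$ to the coordinates outside $\sigma$ is still surjective onto $G$. For a code, expand by Fourier analysis:
\[
\PP[Fx = 0] = \frac{1}{|G|}\sum_{\chi \in \widehat G} \E[\chi(Fx)] = \frac{1}{|G|}\sum_{\chi} \prod_{i=1}^n \E[\chi(F e_i)^{x_i}].
\]
For nontrivial $\chi$, the code property forces $\chi(Fe_i) \neq 1$ for at least $\delta n$ indices $i$. For each such $i$, $\varepsilon$-balancedness (applied mod the relevant prime dividing $a$) gives $|\E[\chi(Fe_i)^{x_i}]| \le 1 - c(\varepsilon, a)$. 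Therefore
\[
\bigl|\PP[Fx=0] - |G|^{-1}\bigr| \le e^{-c\delta n}.
\]
Raising to the power $n+u$ and summing over at most $|G|^n$ codes, the codes contribute $|G|^{-u} + O(e^{-c'n})$. A complementary count is also needed: the codes make up all but an exponentially small fraction of $\Hom(R^n, G)$.

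\textbf{Step 2 (non-codes), the main obstacle.} The danger is that a non-code $F$ can have $\PP[Fx=0]$ much larger than $|G|^{-1}$. We would stratify non-codes by \emph{depth}. For each proper subgroup $H < G$, consider the $F$ that map all coordinates outside some set $\sigma$ with $|\sigma| < \delta n$ into $H$, taking $H$ minimal and with the induced map to $H$ a code. The number of such $F$ is at most
\[
\binom{n}{\lceil \delta n\rceil} |G|^{\delta n} |H|^{n}.
\]
For such $F$, conditioning on the coordinates in $\sigma$ and applying Step 1 to $H$, we expect a bound of the form
\[
\PP[Fx = 0] \le (1-\varepsilon)^{[\text{coset condition}]}\bigl(|H|^{-1} + e^{-cn}\bigr).
\]
Here $\varepsilon$-balancedness is used once more to show that landing in $H$ costs a factor bounded away from $1$ unless $F$ already maps almost everything into $H$. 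Multiplying over the $n+u$ columns, the total contribution of this stratum is roughly
\[
\binom{n}{\delta n}|G|^{\delta n}\,|H|^{-u}\,(1-\varepsilon)^{n+u}\cdots,
\]
which is exponentially small once $\delta$ is chosen small relative to $\varepsilon$ and $|G|$. This balancing of the entropy factor $\binom{n}{\delta n}|G|^{\delta n}$ against the $\varepsilon$-decay is the delicate part. We would follow Wood's depth bookkeeping, choosing $\delta$ last.

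\textbf{Step 3.} Sum the contributions of codes and of all finitely many depth strata. This gives $\bigl|\E[\#\Sur(\coker X_n, G)] - |G|^{-u}\bigr| \le Ke^{-cn}$. Weak convergence to $\Gamma_{CL}^{(u)}$ then follows from Lemma~\ref{lem:known-moments} together with Theorem~\ref{thm:moment-robustness}, since $|G|^{-u} \le |\wedge^2 G|$.
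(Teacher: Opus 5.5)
Your proposal is correct and is essentially the argument the paper relies on: the paper cites Wood and describes exactly this method, namely Lemma~\ref{lem:moment-sum}, a Fourier estimate for codes, the count in Lemma~\ref{lem:num-depth}, and a per-column $e^{-\varepsilon}$ coset factor as in Lemma~\ref{lem:bound-prob-depth}. When you fill in ``Wood's depth bookkeeping,'' the removed set $\sigma$ must be allowed to have size $<\ell(D)\delta n$ rather than the single threshold $<\delta n$ you wrote. That graded threshold is what guarantees the induced map onto $H=F(R^n_\sigma)$ is itself a code of distance $\delta n$, and the factor $1-\varepsilon$ then appears in every column simply because a surjective $F$ cannot map all coordinates into $H$.
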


\begin{theorem}[Consequence of {\cite[Theorem 6.1]{wood2017sandpile}}]\label{thm:symmetric-moments}
Let $\varepsilon > 0$ and $G$ be a finite abelian group with an $R$-module structure for $R = \Z$, $R = \Z_p$, or $R = \Z/a\Z$. For each natural number $n$, let $X_n$ be an $\varepsilon$-balanced \textit{symmetric} $n\times n$ matrix with entries in $R$. There are $c, K > 0$ depending only on $\varepsilon$ and $G$ such that \[
\left|\E[\#\Sur(\coker(X_n), G)] - |\wedge^2G|\right| \leq Ke^{-cn}
\]
In particular, $\coker(X_n)$ converges weakly to $\Gamma_{\text{sandpile}}$.  
\end{theorem}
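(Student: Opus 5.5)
The statement is an immediate consequence of \cite[Theorem 6.1]{wood2017sandpile}. The plan is to reduce every case to a symmetric matrix over $\Z/a\Z$, where that theorem applies directly, and then obtain weak convergence from the moment machinery of Section~\ref{sect:background}.

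\textbf{Reduction to $\Z/a\Z$.} Let $a$ be the exponent of $G$; when $R = \Z_p$, $G$ is a $p$-group, so $a$ is a power of $p$. Every homomorphism $\coker(X_n) \to G$ kills $a\cdot\coker(X_n)$. Hence it factors uniquely through
\[
\coker(X_n)\otimes \Z/a\Z = \coker(X_n \bmod a),
\]
and so $\#\Sur(\coker(X_n), G) = \#\Sur(\coker(\bar X_n), G)$, where $\bar X_n$ is the reduction of $X_n$ modulo $a$. The reduction $\bar X_n$ is again symmetric, and its entries on and above the diagonal are independent. They are also $\varepsilon$-balanced over $\Z/a\Z$. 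To see this, note that the maximal ideals of $\Z/a\Z$ are the ideals $(\ell)$ for primes $\ell \mid a$. Each such ideal is the image of a maximal ideal of $R$: the ideal $(\ell)$ of $\Z$, or $(p)$ of $\Z_p$. So the balancedness hypothesis on $X_n$ transfers directly to $\bar X_n$. It therefore suffices to treat $R = \Z/a\Z$ with $G$ of exponent dividing $a$.

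\textbf{Moment bound over $\Z/a\Z$.} We expand
\[
\E[\#\Sur(\coker \bar X_n, G)] = \sum_{F\in \Sur((R^n), G)} \PP[F\circ \bar X_n = 0].
\]
This is exactly the quantity estimated in \cite[Theorem 6.1]{wood2017sandpile}. That theorem gives the bound $\bigl|\,\cdot\, - |\wedge^2 G|\bigr| \le K e^{-cn}$ with constants depending only on $\varepsilon$ and $G$ (and on $a$, which is determined by $G$). We therefore invoke it rather than reprove it.

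For orientation, the internal argument of that theorem goes as follows. Maps $F$ are split into \emph{codes}, i.e.\ maps that remain surjective after deleting any small set of coordinates, and non-codes stratified by depth. For codes, a Fourier/Halász-type computation shows that $\PP[F\bar X_n = 0]$ is close to $|G|^{-n}|\wedge^2 G|$; the factor $|\wedge^2 G|$ arises because symmetry makes the equations $F\bar X_n = 0$ dependent through the alternating pairing. For non-codes, one bounds their number against a weaker probability estimate. The non-code counting is the genuinely hard step, but it is entirely contained in Wood's theorem.

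\textbf{Weak convergence.} By Lemma~\ref{lem:known-moments}, the limiting moment $|\wedge^2 G|$ equals $\E[\#\Sur(\Gamma_{\text{sandpile}}, G)]$. This value trivially satisfies the bound $\le |\wedge^2 G|$ in $(*)$. Theorem~\ref{thm:moment-robustness} then applies for every $a$. By Remark~\ref{rmk:weak-convergence}, this yields weak convergence of $\coker(X_n)$ to $\Gamma_{\text{sandpile}}$.
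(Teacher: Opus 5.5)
Your proposal is correct and matches the paper, which gives no separate proof. The paper cites Wood's Theorem 6.1 after the reduction to $\Z/a\Z$, made in its remark that surjections onto $G$ factor through $\coker(X_n)\otimes\Z/a\Z$, and obtains weak convergence from Theorem~\ref{thm:moment-robustness} together with Lemma~\ref{lem:known-moments}, exactly as you do. One minor caveat, inherited from the statement itself: for $R=\Z_p$ the moments are only available for $p$-groups $G$, so the weak limit in that case is really $\Gamma_{\text{sandpile}}^{(p)}$ rather than $\Gamma_{\text{sandpile}}$.
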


\begin{remark}
If $R = \Z$ or $\Z_p$, surjections $\coker(X_n) \twoheadrightarrow G$ always factor uniquely through $\Z/a\Z$ if the exponent of $G$ divides $a$. So, without loss of generality we will restrict ourselves to $R = \Z/a\Z$ for the rest of the paper.
\end{remark}

In fact, Nguyen and Wood \cite{nguyenRandomIntegralMatrices2022, nguyenLocalGlobalUniversality2025} show that in both cases, one actually has strong (setwise) convergence of the cokernels to the limiting distribution, although they impose an i.i.d. assumption on the entries. While the results of this paper are limited to weak convergence, it would be interesting to see if strong convergence still holds.

We give a brief summary of the ideas in the proof of Theorem~\ref{thm:symmetric-moments} that will be useful to us. A similar (but simpler) approach can also be used to prove Theorem~\ref{thm:iid-moments}. The first step is to notice that surjections $\coker(X_n) \twoheadrightarrow G$ correspond to surjections $R^n \twoheadrightarrow G$ which vanish on the image of $X_n$. Thus, we have the well-known result:

\begin{lemma}\label{lem:moment-sum}
Let $X_n$ be a random $n\times n$ matrix over $R$ and $G$ a finite abelian group with $R$-module structure. Then \[
\E[\#\Sur(\coker(X_n), G)] = \sum_{F\colon R^n \twoheadrightarrow G} \PP[F\circ X_n = 0]
\]
\end{lemma}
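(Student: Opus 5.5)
The plan is to set up a bijection between the two sets being counted and then take expectations. Throughout, $G$ is an $R$-module and $X_n$ is viewed as a map $(R^n)^* \to R^n$, so $\coker(X_n) = R^n/X_n((R^n)^*)$. Write $\pi\colon R^n \twoheadrightarrow \coker(X_n)$ for the quotient map.

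First, for a fixed matrix $X$, show that composition with $\pi$ gives a bijection
\[
\Sur(\coker(X), G) \longrightarrow \{F\colon R^n \twoheadrightarrow G \mid F\circ X = 0\}, \qquad \phi \mapsto \phi\circ\pi .
\]
This map is well defined. Since $\pi$ is surjective, $\phi\circ\pi$ is surjective whenever $\phi$ is. Also $\phi\circ\pi\circ X = 0$, because $\pi$ kills the image of $X$.

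Injectivity also follows from surjectivity of $\pi$: if $\phi\circ\pi = \phi'\circ\pi$, then $\phi = \phi'$.

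Surjectivity of the map is the universal property of the quotient. If $F\circ X = 0$, then $F$ vanishes on $X((R^n)^*)$. Hence $F$ factors uniquely as $F = \phi\circ\pi$ for some homomorphism $\phi\colon \coker(X) \to G$. Since $F$ is surjective, so is $\phi$. All maps here are $R$-module maps. For $R = \Z/a\Z$ or $\Z$ this is the same as being a group homomorphism. For $R = \Z_p$ and $G$ a finite $p$-group, additive maps are automatically $\Z_p$-linear by continuity, or one simply interprets $\Sur$ as $R$-module surjections throughout.

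Consequently, for each fixed $X$,
\[
\#\Sur(\coker(X), G) = \sum_{F\colon R^n \twoheadrightarrow G} \mathbf{1}[F\circ X = 0].
\]
Now replace $X$ by the random matrix $X_n$ and take expectations. The index set of the sum is $\Sur(R^n, G)$, which is finite because $|\Hom(R^n, G)| \le |G|^n$. So linearity of expectation applies with no convergence issues. Since $\E[\mathbf{1}[F\circ X_n = 0]] = \PP[F\circ X_n = 0]$, this gives exactly the formula in the lemma.

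There is no real obstacle here. The only point requiring care is to confirm that the sum is finite and that measurability holds. The event $F\circ X_n = 0$ depends only on $X_n$ modulo the exponent of $G$, so it is a measurable event.
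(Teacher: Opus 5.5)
Your proposal is correct and follows the paper's own justification. The paper also derives the lemma from the observation that surjections $\coker(X_n)\twoheadrightarrow G$ correspond bijectively to surjections $R^n\twoheadrightarrow G$ vanishing on the image of $X_n$; you add the explicit bijection and the linearity-of-expectation step over the finite set $\Sur(R^n,G)$.
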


Ideally, one could compute $\PP[F\circ X_n = 0]$ directly for each $F$, then compute the sum. If $F$ is sufficiently general, one hopes that the distribution of $F\circ X_n$ is easy to understand, since composition with $F$ mixes the many independent sources of randomness comprising $X_n$. We make precise what we need from the notion of ``sufficiently general'':

\begin{definition}
For $\sigma \subset [n]$, write $R^n_{\sigma}$ for the submodule of $R^n$ spanned by the standard basis elements \textit{not} indexed in $\sigma$. 

We say $F\in \Hom(R^n, G)$ is a \textit{code of distance $w$} if for every $\sigma \subset [n]$ with $\#\sigma < w$, the map $F|_{R^n_{\sigma}}\colon R^n_{\sigma} \to G$ is surjective. 
\end{definition}

Then we have:

\begin{lemma}[{\cite[Lemma 4.1]{wood2017sandpile}}]\label{lem:symm-codes}
Let $\varepsilon, w > 0$, let $a$ be a positive integer and $R = \Z/a\Z$, and let $G$ be a finite abelian group of exponent dividing $a$. There is a constant $K > 0$ such that the following holds.

Let $X_n$ be a random $\varepsilon$-balanced symmetric $n\times n$ matrix, viewed as a map $(R^n)^* \to R^n$. Let $F$ be a code of distance $w$. Let $A \in \Hom((R^n)^*, G)$. For all $n$ we have \[
\left|\PP[F\circ X_n = 0] - |\wedge^2G||G|^{-n}\right| \leq \frac{K\exp\left(\frac{\varepsilon w}{4a^2}\right)}{|G|^n}
\]
and \[
\PP[F\circ X_n = A] \leq K|G|^{-n}
\]
\end{lemma}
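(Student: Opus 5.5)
The plan is to follow Wood's proof in \cite{wood2017sandpile}: compute the distribution of $F\circ X_n$ by Fourier analysis on $\Hom((R^n)^*, G) \cong G^n$. Write $X_n = \sum_{i\le j} x_{ij} E_{ij}$, where $E_{ii}$ is the diagonal elementary matrix and $E_{ij}=e_ie_j^\intercal+e_je_i^\intercal$ for $i<j$. Let $F(e_i)=f_i\in G$. Then $F\circ X_n$ is the tuple whose $j$-th coordinate is $\sum_i x_{ij}f_i$, with $x_{ij}=x_{ji}$.

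By Fourier inversion,
\[
\PP[F\circ X_n = A] = |G|^{-n}\sum_{\chi} \overline{\chi(A)}\,\E[\chi(F\circ X_n)],
\]
where $\chi$ ranges over characters of $G^n$. Identify $\chi$ with $(\chi_1,\dots,\chi_n)$, $\chi_j\in\Hom(G,\C^*)$, and pair it with $F$ to get the matrix $\chi_j(f_i)$. Independence of the entries factors the expectation as
\[
\E[\chi(F\circ X_n)]=\prod_{i\le j}\E\bigl[\zeta^{x_{ij}\,c_{ij}}\bigr],
\]
where $c_{ij}\in\frac1a\Z/\Z$ encodes $\chi_j(f_i)\chi_i(f_j)$ for $i<j$ and $\chi_i(f_i)$ for $i=j$.

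The $\varepsilon$-balanced hypothesis gives the standard bound: each factor with $c_{ij}\neq 0$ has absolute value at most $\exp(-\varepsilon/a^2)$, or a similar explicit constant. Factors with $c_{ij}=0$ equal $1$. So the main term comes from characters $\chi$ for which the symmetric pairing $c_{ij}$ vanishes identically. For such $\chi$ every factor is $1$, and $\chi(F\circ X_n)=1$ almost surely.

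Next, count these \emph{special} characters. Since $F$ is surjective (a code of distance $w\ge1$), $\chi$ corresponds to an element $\phi\in\Hom(G,\Hom(G,\C^*))$ via $\chi_j=\phi(f_j)$-type data. The vanishing of $c_{ij}$ for all $i,j$ says that the associated bilinear form $G\times G\to\C^*$ is alternating. Two sub-points need care here. First, every special $\chi$ must factor through $F$; this uses the code property to force $\chi_j$ to depend only on $f_j$. Second, the number of alternating forms on $G$ is $|\wedge^2 G|$, with the diagonal condition supplying the alternating (rather than merely antisymmetric) condition. Together these give the main term $|\wedge^2G||G|^{-n}$. For $A=0$, the special characters contribute exactly this term. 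For general $A$, each special term has modulus $|G|^{-n}$, which gives the upper bound once the remaining terms are controlled.

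The main obstacle is bounding the sum over non-special $\chi$. Following Wood, split according to how many columns $j$ carry a nonzero character that is not explained by the bilinear-form structure. If $\chi$ is non-special, the code property of $F$ (surjectivity after deleting fewer than $w$ coordinates) forces many pairs $(i,j)$ with $c_{ij}\ne0$. Concretely, fix a non-special $\chi$ and a column $j$ whose $\chi_j$ deviates from the best alternating approximation. Then $\chi_j(f_i)\chi_i(f_j)\ne1$ for at least about $w$ indices $i$, since otherwise the set of $f_i$ where it vanishes would generate $G$.

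Count characters by the number $m$ of deviating columns. Each term is bounded by $\exp(-\varepsilon w m/a^2)$ times roughly $|G|^{m}$ choices times $\binom nm$ choices of columns. Summing over $m$ gives a geometric-type series that yields the $K\exp(\varepsilon w/(4a^2))$-shaped error. I would accept the constants as in \cite[Lemma 4.1]{wood2017sandpile}, and handle the large-$m$ regime separately using that roughly $m\cdot n$ coefficients are then nonzero. This bookkeeping is the delicate step, and I would follow Wood's partition of characters essentially verbatim.
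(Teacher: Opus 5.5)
Your strategy is Wood's Fourier-analytic one, and the main-term analysis is fine. Factoring through $F$ needs only surjectivity: for special $\chi$ the form $B(u,v)=\chi(v)(Fu)$ on $R^n$ is alternating, so for $v\in\ker F$ one gets $\chi(v)(Fu)=-\chi(u)(Fv)=0$ for all $u$.

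The gap is the non-special estimate, which is the entire content of the lemma, and the claims you make there are false. Write $\chi'=\chi-\phi\circ F$ with deviating set $\sigma$, $|\sigma|=m$. Then $c_{ij}=\chi'_j(f_i)+\chi'_i(f_j)$ for $i<j$. The code property controls only the first summand, and the second can cancel it when $i\in\sigma$ as well. So a deviating column is only guaranteed $w-m$ nonzero entries (those with $i\notin\sigma$), which says nothing once $m\gtrsim w$.

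Worse, normalizing by \emph{alternating} $\phi$ is wrong when $G$ has $2$-torsion. Take $G=\Z/2\Z$, $F=(1,\dots,1)$ (a code of distance $n$) and every $\chi_j$ nontrivial. Then $c_{ij}=\tfrac12+\tfrac12=0$ for $i<j$ and $c_{ii}=\tfrac12$, so $\E[\chi(F\circ X_n)]=\prod_i\E[(-1)^{x_{ii}}]$, which has modulus $(1-2\varepsilon)^n$ if $\PP[x_{ii}=1]=\varepsilon$. Since $0$ is the only alternating form on $\Z/2\Z$, here $m=n$. Both your bound $\exp(-\varepsilon wm/a^2)=\exp(-\varepsilon n^2/4)$ and your claim that roughly $m\cdot n$ coefficients are nonzero therefore fail.

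The diagonal-versus-polarization distinction that you correctly used in the main term has to be carried into the error analysis. You should measure distance to $\phi\circ F$ with $\phi$ merely antisymmetric, i.e.\ $\phi(g)(h)+\phi(h)(g)=0$. For such $\phi$ and $i\notin\sigma$, $j\in\sigma$ one has exactly $c_{ij}=\chi'_j(f_i)$, so $N(\chi):=\#\{i\le j: c_{ij}\neq 0\}\ge m(w-m)$. For each of the boundedly many non-alternating antisymmetric $\phi$, the map $g\mapsto\phi(g)(g)$ is a nonzero homomorphism, so at least $w-m$ diagonal coefficients are also nonzero.

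What is still missing is a stability lemma for the large-$m$ regime, which your ``roughly $m\cdot n$'' remark assumes without proof. The lemma: there is $\gamma=\gamma(G)>0$ such that $N(\chi)<\gamma w^2$ forces $\chi$ to agree with $\phi\circ F$ outside fewer than $w/2$ columns, for some antisymmetric $\phi$. One way to prove it:
\begin{itemize}
\item Fewer than $w/4$ indices have off-diagonal degree at least $t=w/(4r)$, where $r=\ell(|G|)$.
\item Choose at most $r$ indices $\eta$ of smaller degree with $\{f_i\}_{i\in\eta}$ generating $G$.
\item Consider the set of $g\in G$ for which some (necessarily unique) $\psi\in G^*$ satisfies $\psi(f_i)=-\chi_i(g)$ for all $i\in\eta$. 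It is a subgroup containing $f_j$ for every $j$ not adjacent to $\eta$, i.e.\ for all but fewer than $r+w/4$ indices, hence it is $G$ by the code property.
\item This gives a homomorphism $\phi$ with $\chi_j=\phi(f_j)$ for all such $j$. If $\phi$ were not antisymmetric, its symmetrization evaluated on the code would produce $\gtrsim w^2$ nonzero $c_{ij}$.
\end{itemize}

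Only with this does your count close, with error at most
\[
|G|^ne^{-\varepsilon\gamma w^2/a^2}+O\bigl((1+|G|e^{-\varepsilon w/(2a^2)})^n-1\bigr)+O\bigl(e^{-\varepsilon w/(2a^2)}(1+|G|e^{-\varepsilon w/(2a^2)})^n\bigr).
\]
This is $O(e^{-cn})$ only when $w$ grows linearly in $n$, and is not even bounded for fixed $w$. So your claim that the sum over $m$ yields a bound uniform in $n$ for fixed $w$ is wrong.

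In fact the lemma as literally stated (fixed $w$, $K$ independent of $n$) is false. Take $G=\Z/5\Z$, $F=(1,1,0,\dots,0)$ (a code of distance $2$), and entries equal to $0$ with probability $0.9$. Then $\PP[F\circ X_n=0]\ge 0.9^{2n-1}$, which is not $O(5^{-n})$. What your argument can prove is the version for codes of distance $\delta n$, with constants depending on $\delta$ and error $Ke^{-cn}|G|^{-n}$. That is what Wood proves and what Lemma~\ref{lem:reduction} actually uses.
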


The second part of Lemma~\ref{lem:symm-codes} is not needed for the main term in the moment calculation from Lemma~\ref{lem:moment-sum} coming from codes $F$. However, it will come up when we try to handle maps $F\colon R^n \to G$ that are not codes. The idea is that any map $F\colon R^n \to G$ is a code onto some subgroup $H \leq G$ after restricting appropriately. If the index of $H$ in $G$ is small enough, some control over the behavior of the restricted map into $H$ is enough to show that the contribution of non-codes is negligible.

To do this more carefully, Wood stratifies non-codes $F\colon R^n \to G$ by the index of the largest subgroup $H \leq G$ onto which they are a code:

\begin{definition}
For an integer $D$ with prime factorization $D = \prod_i p_i^{e_i}$, we define $\ell(D) = \sum_i e_i$.

The \textit{$w$-depth} of a map $F\colon R^n \to G$ is the maximal positive integer $D$ such that there is a subset $\sigma \subset [n]$ with $\#\sigma < \ell(D)w$ and $D = [G : F(R^n_\sigma)]$, or 1 if no such $D$ exists.
\end{definition}

We observe that a map of $w$-depth 1 is a code of distance $w$. We have a bound on the number of maps of each depth more than 1:

\begin{lemma}[{\cite[Lemma 5.2]{wood2017sandpile}}]\label{lem:num-depth}
There is a constant $K$ depending on $G$ such that if $D > 1$, there are at most \[
K\binom{n}{\lceil \ell(D)w\rceil - 1}|G|^nD^{-n + \ell(D)w}
\]
maps $R^n \to G$ of $w$-depth $D$.
\end{lemma}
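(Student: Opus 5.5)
We need to prove Lemma~\ref{lem:num-depth}: a bound on the number of maps $F\colon R^n \to G$ of $w$-depth $D>1$. The plan is a union bound. Every such $F$ carries a witness: a subset $\sigma \subset [n]$ with $\#\sigma < \ell(D)w$ and a subgroup $H = F(R^n_\sigma)$ of index exactly $D$ in $G$. We count triples $(\sigma, H, F)$ in which $F$ maps $R^n_\sigma$ into $H$. This overcounts the maps of depth $D$, which is fine since we only need an upper bound.

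Write $m = \lceil \ell(D)w\rceil - 1$. Then $\#\sigma < \ell(D)w$ forces $\#\sigma \le m$, so we may enlarge $\sigma$ to a set of size exactly $m$. Enlarging $\sigma$ only shrinks $R^n_\sigma$, so the condition $F(R^n_\sigma)\subseteq H$ is preserved, and the image can only become smaller. Because we only impose containment in $H$, not equality, we may count with $\#\sigma = m$ exactly (assuming $m \le n$; otherwise the bound is trivial up to adjusting $K$). There are then $\binom{n}{m}$ choices for $\sigma$.

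For the choice of $H$, the number of subgroups of the finite group $G$ is a constant depending only on $G$, and it goes into $K$.

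Now fix $\sigma$ and $H$. A map $F$ is determined by the images of the standard basis vectors $e_1,\dots,e_n$. For the $n - m$ indices not in $\sigma$ we need $F(e_i) \in H$, giving at most $|H| = |G|/D$ choices each. For the $m$ indices in $\sigma$ there are at most $|G|$ choices each. The number of such $F$ is therefore at most
\[
|G|^m (|G|/D)^{n-m} = |G|^n D^{-n+m} \le |G|^n D^{-n+\ell(D)w},
\]
where the last inequality uses $m < \ell(D)w$ and $D>1$.

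Multiplying by $\binom{n}{m}$ and by the number of subgroups gives the claimed bound
\[
K\binom{n}{\lceil \ell(D)w\rceil - 1}|G|^n D^{-n+\ell(D)w}.
\]
The only delicate point is the reduction to $\#\sigma = m$ exactly, so that a single binomial coefficient appears in place of a sum $\sum_{j\le m}\binom{n}{j}$. This is handled by the monotonicity of the condition $F(R^n_\sigma)\subseteq H$ under enlarging $\sigma$, which we noted above.
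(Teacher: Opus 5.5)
Your union bound over pairs $(\sigma, H)$ is correct, and it is essentially the argument of \cite[Lemma 5.2]{wood2017sandpile}, which the paper cites rather than reproves: you pad $\sigma$ to size exactly $\lceil \ell(D)w\rceil - 1$ and force $F(e_i)\in H$ for $i\notin\sigma$. The one flaw is your aside about the case $\lceil \ell(D)w\rceil - 1 > n$. There the binomial coefficient is $0$, so no choice of $K$ helps, and for $D=|G|$ the statement itself fails: taking $\sigma=[n]$ shows every map has depth $|G|$. This degenerate regime does not arise in the application, where $w=\delta n$ with $\delta$ small as in Lemma~\ref{lem:moment-from-isotropy}.
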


Then Lemma~\ref{lem:symm-codes} can be used to bound $\PP[F\circ X_n = 0]$ when $F$ is a code. We say a property $\mathcal{P}$ of square random matrices over $R$ is \textit{stable} if for any subset $\eta \subset [n]$ and any random matrix $X$ with $\mathcal{P}$, the random matrix $X_{\eta\eta} = (X_{ij})_{i, j \in \eta}$ has $\mathcal{P}$. Examples of stable properties include: \begin{itemize}
    \item the vacuous property satisfied by all matrices;
    \item the property that $X$ is symmetric with probability 1;
    \item the property that there exists a matrix $C$ such that $X$ is $C$-symmetric with probability 1.
\end{itemize}

We also say $X$ \textit{has many independent entries} if the pairs $(X_{ij}, X_{ji}) \in R^2$ are independent as $\{i, j\}$ ranges over two-element subsets of $[n]$. In particular, $X_{ij}$ is independent of $X_{k\ell}$ unless $\{i, j\} = \{k, \ell\}$.

The following lemma is proved for symmetric matrices in \cite[Lemma 5.4]{wood2017sandpile}, but it follows by examining the proof that it holds in the greater generality given here. 

\begin{lemma}\label{lem:bound-prob-depth}
Let $\varepsilon > 0$, let $a$ be a positive integer and $R = \Z/a\Z$, and let $G$ be a finite abelian group of exponent dividing $a$. 

Let $X_n$ be a random $n\times n$ matrix with a stable property $\mathcal{P}$ with many independent $\varepsilon$-balanced entries.

Let $F\colon R^n \to G$ have $w$-depth $D > 1$ and $[G : F(R^n)] < D$.

Suppose the following assumption holds: \begin{enumerate}[label=(A)]
    \item For any finite abelian group $H$ of exponent dividing $a$, any any $m\times m$ random matrix $Y_m$ with $\mathcal{P}$ with many independent $\varepsilon$-balanced entries, any code $F'\colon R^m \to H$ of distance $w$, and any map $A \in \Hom((R^m)^*, G)$, there is a constant $K'$ depending only on $a, H, w/m$ such that \[
    \PP[F'\circ Y_m = A] \leq K'|H|^{-m}
    \]
\end{enumerate}
Then there is a constant $K > 0$ depending only on $\varepsilon, a, G, w/n, D$ such that \[
\PP[F\circ X_n = 0] \leq K\left(e^{-\varepsilon}\frac{|G|}{D}\right)^{\ell(D)w - n}
\]
\end{lemma}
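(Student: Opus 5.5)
We follow the proof of \cite[Lemma 5.4]{wood2017sandpile} and check that it uses only stability of $\mathcal{P}$, independence of the pairs $(X_{ij},X_{ji})$, and assumption (A). Since $F$ has $w$-depth $D>1$, choose $\sigma\subset[n]$ with $\#\sigma<\ell(D)w$ and $[G:H]=D$, where $H = F(R^n_\sigma)$. Let $\eta=[n]\setminus\sigma$. Order the coordinates so that $\sigma$ comes first, and split $X_n$ into blocks $X_{\sigma\sigma}, X_{\sigma\eta}, X_{\eta\sigma}, X_{\eta\eta}$. Also split $F=(F_\sigma, F_\eta)$, where $F_\eta = F|_{R^n_\sigma}\colon R^\eta\to H$.

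The key claim is that $F_\eta$ is a code of distance $w$ onto $H$, viewed as a map $R^{\eta}\to H$. Suppose instead that some $\tau\subset\eta$ with $\#\tau<w$ gave $F(R^n_{\sigma\cup\tau})$ of index $>1$ in $H$. Then $\sigma\cup\tau$ has size $<(\ell(D)+1)w\le \ell(D')w$, and the resulting index $D'$ satisfies $D' > D$ with $D\mid D'$, so $\ell(D')\ge \ell(D)+1$. This contradicts the maximality of $D$. The edge case where $F_\eta$ restricted to the complement of $\tau$ has smaller image but we do not increase $\ell$ enough is handled exactly as in Wood's proof, by choosing $\sigma$ to realize the maximal $D$.

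Now condition on all entries of $X_n$ outside the $\eta\times\eta$ block; by many independent entries, $X_{\eta\eta}$ is independent of these. The condition $F\circ X_n=0$ restricted to the columns indexed by $\eta$ reads
\[
F_\eta\circ X_{\eta\eta} = -F_\sigma\circ X_{\sigma\eta}.
\]
The right-hand side is fixed after conditioning. By stability, $X_{\eta\eta}$ has $\mathcal{P}$ with many independent $\varepsilon$-balanced entries. Assumption (A), applied with $m=\#\eta$ and the code $F_\eta$, then bounds this conditional probability by $K'|H|^{-(n-\#\sigma)}$.

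It remains to control the columns indexed by $\sigma$. Their image under $F$ lies in $G$, but modulo $H$ only the $F_\sigma$ part contributes. The condition that the image of $F\circ X_n$ on the $\sigma$ columns vanishes in $G/H$ involves the entries $X_{\sigma\eta}$ and $X_{\eta\sigma}$ through $F_\sigma$. Wood extracts a factor from $\varepsilon$-balancedness here. For each $j\in\eta$, the column $X_{\sigma j}$ must make $F_\sigma X_{\sigma j}$ land in a prescribed coset of $H$. Since $[G:F(R^n)]<D$, $F_\sigma$ has nontrivial image in $G/H$, so this event has probability at most $1-\varepsilon\le e^{-\varepsilon}$. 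These events are independent over $j\in\eta$, conditionally on the rest, because they involve distinct pairs $\{i,j\}$.

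Combining the two pieces gives
\[
\PP[F\circ X_n = 0] \le K' |H|^{-(n-\#\sigma)} e^{-\varepsilon(n-\#\sigma)} = K'\left(e^{-\varepsilon}\frac{|G|}{D}\right)^{-(n-\#\sigma)}.
\]
Since $e^{-\varepsilon}|G|/D$ may be bounded below, replacing the exponent $n-\#\sigma$ by $n-\ell(D)w$ costs only a constant depending on $w/n$, $D$ and $G$. This gives the stated bound.

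The main obstacle is the $\sigma$-column step. There the dependence between $X_{\sigma\eta}$ and $X_{\eta\sigma}$, through the pairs $(X_{ij},X_{ji})$, must be untangled. We must also check that the order of conditioning still leaves one free $\varepsilon$-balanced entry per $j$, without using symmetry. We would first try conditioning on $X_{\eta\sigma}$ and $X_{\eta\eta}$ only after revealing $X_{\sigma\eta}$ entrywise. We would also use that each $\varepsilon$-balanced entry keeps conditional mass $\le 1-\varepsilon$ on any residue, given its partner, only if this holds. Otherwise we would apply the (A)-type bound before the balancedness step instead.
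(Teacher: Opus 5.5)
Your proposal is correct and is essentially the paper's argument. The paper shrinks $\sigma$ to $\tau=\{i : Fe_i\notin H\}\subseteq\sigma$ and works with $\eta=[n]\setminus\tau$, which is unnecessary. Your ``$\sigma$-column step'' is really the constraint $F_\sigma X_{\sigma j}\in H$ on the $\eta$-columns, as you then correctly state. Also, $|H|^{-m}e^{-\varepsilon m}=(e^{\varepsilon}|G|/D)^{-m}$ has base greater than $1$, so replacing $m=n-\#\sigma$ by $n-\ell(D)w$ is free rather than costing a constant.

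The worry in your last paragraph dissolves exactly as in the paper. The event that $F_\sigma X_{\sigma\eta}$ takes values in $H$ is measurable with respect to the entries outside the $\eta\times\eta$ block, and your (A) bound holds uniformly over those entries. So you only need its unconditional probability, which uses marginal $\varepsilon$-balancedness of one entry $X_{i_0 j}$ with $Fe_{i_0}\notin H$ per column $j$, plus independence of distinct pairs. You never need to condition on partner entries.
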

\details{\begin{proof}
Let $e_1, \dots, e_n$ be the standard basis for $R^n$ and $e_1^*, \dots, e_n^*$ the dual basis for $(R^n)^*$

Pick a $\sigma \subset [n]$ with $\#\sigma < \ell(D)w$ such that $D = [G : F(R^n_\sigma)]$. This exists by the definition of depth. Let $H = F(R^n_\sigma)$. The idea is to split up the basis vectors by whether $Fe_i \in H$.

Let $\eta \subset [n]$ be the set of $i$ such that $Fe_i \in H$ and $\tau = [n] \setminus \eta$. Since $[n]\setminus\sigma \subset \eta$, we have $\tau \subset \sigma$, so $\#\tau < \ell(D)w$. However, $\#\tau \geq 1$, since if $\tau$ were empty, then $Fe_i \in H$ for all $i \in [n]$, so $F(R^n_\sigma) = F(R^n)$, which contradicts $[G : F(R^n_\sigma)] = D > [G : F(R^n)]$.

Let $X_n^\eta$ be obtained from $X_n$ by replacing the entries in the $\tau$ rows by $0$, and let $X_n^\tau$ be obtained from $X_n$ by replacing the entries in the $\eta$ rows with $0$.We have \[
\PP[F\circ X_n = 0] = \PP[\operatorname{im}(F\circ X_n^\tau) \subseteq H]\PP[F\circ X_n = 0 \mid \operatorname{im}(F\circ X_n^\tau) \subseteq H].
\]
Let $X_n^{\tau\eta}$ be obtained from $X_n^\tau$ by replacing the entries in the $\tau$ columns with 0. Note that any entry not forced to be 0 in $X_n^{\tau\eta}$ has row coordinate in $\tau$ and column coordinate in $\eta$. Since $\tau$ and $\eta$ are disjoint, all such entries are independent by the assumption that $X_n$ has many independent entries. We have \[
\PP[\operatorname{im}(F\circ X_n^\tau) \subseteq H] \leq \PP[\operatorname{im}(F\circ X_n^{\tau\eta}) \subseteq H] = \prod_{i \in \eta} \PP[F\circ X_n^\tau(e_i^*) \in H].
\]
We now want to bound $\PP[F\circ X_n^\tau(e_i^*) \in H]$. Let $x_1, \dots, x_{|\tau|}$ be the $\tau$ entries in the $i$th column of $X_n$, so $X_n^\tau(e_i^*) = x_1e_{\tau_1} + \dots + x_{|\tau|}e_{\tau_{|\tau|}}$, where $\tau = \{\tau_1, \dots, \tau_{|\tau|}$. For $j = 1, \dots, |\tau|$ let $f_j = Fe_{\tau_j}$ so that $FX_n^\tau(e_i^*) = f_1x_1 + \dots + f_{|\tau|}x_{|\tau|}$. Note that $f_j \notin H$ for each such $j$ by construction of $\tau$.

Condition on $x_2, \dots, x_{|\tau|}$. Then for fixed $g = -(f_2x_2 + \dots + f_{|\tau|}x_{|\tau|})$ and $f_1 \in G \setminus H$, we want to bound \[
\PP[f_1x_1 \equiv g \text{ in }G/H].
\]
Since $f_1 \notin H$, it has order greater than 1 in $G/H$. Let $p$ be some prime dividing this order. We note that since $p$ divides the order of $f_1$ in $G/H$, if $p \nmid \Delta$, then $f_1\Delta \not\equiv 0$ in $G/H$, so in particular $f_1(x + \Delta) \not\equiv f_1x$ for any $x$. So, the integers $x$ such that $f_1x \equiv g$ in $G/H$ lie in a single equivalence class modulo $p$. Thus, $\PP[f_1x_1 \equiv g \text{ in }G/H]$ is bounded by the probability that $x_1$ lies in that particular equivalence class.

Since $x_1$ is $\varepsilon$-balanced, it has probability at most $1 - \varepsilon \leq e^{-\varepsilon}$ of lying in any equivalence class modulo $p$, so \[
\PP[f_1x_1 \equiv g \text{ in }G/H] \leq e^{-\varepsilon}
\]
and \[
\PP[\operatorname{im}(F\circ X_n^\tau) \subseteq H] \leq e^{-|\eta|\varepsilon}.
\]
Now we bound $\PP[F\circ X_n = 0 \mid \operatorname{im}(F\circ X_n^\tau) \subseteq H]$. Let $X_n^{\eta\eta}$ be obtained from $X_\eta$ by replacing the entries in the $\tau$ columns with 0. Let $X_n^{*\eta} = X_n^{\eta\eta} + X_n^{\tau\eta}$, i.e., obtained from $X$ by replacing the $\tau$ columns with 0). We have \[
\PP[F\circ X_n = 0 \mid \operatorname{im}(F\circ X_n^\tau) \subseteq H] \leq \PP[F\circ X_n^{*\eta} = 0 \mid \operatorname{im}(F\circ X_n^\tau) \subseteq H]
\]
Condition on the $\tau$ rows and columns of $X$. For any $n\times n$ matrix $Y^\tau$ supported on the $\tau$ rows with $\operatorname{im}(F\circ Y_\tau) \subseteq H$ we have \[
\PP[F\circ X_n^{*\eta} = 0 \mid X_n^\tau = Y_n^\tau] = \PP[F\circ X_n^{\eta\eta} + F\circ Y^{\tau\eta} = 0 \mid X_n^\tau = Y^\tau].
\]
Here $Y^{\tau\eta}$ is obtained from $Y^\tau$ by replacing the entries in the $\tau$ columns with 0.

For fixed $Y$, the map $F\circ Y^{\tau\eta}$ is just some fixed map $(R^n)^*_{\setminus\tau} \to H$. Also, $X_n^{\eta\eta}$ is independent of $X^\tau$. So we just need to estimate $\PP[F\circ X_n^{\eta\eta}]$ for a fixed $A\colon (R^n)^*_{\setminus\tau} \to H$.

To do this, we observe:
\begin{itemize}
    \item By stability of $\mathcal{P}$, the matrix $X_n^{\eta\eta}\colon (R^n)^*_{\setminus\tau} \to R^n_{\setminus\tau}$ has $\mathcal{P}$.
    \item The matrix $X_n^{\eta\eta}\colon (R^n)^*_{\setminus\tau} \to R^n_{\setminus\tau}$ has many independent $\varepsilon$-balanced entries.
    \item The map $F|_{R^n_{\setminus\tau}}\colon R^n_{\setminus\tau} \to H$ is a code of distance $w$. Otherwise, by eliminating $\tau$ and $<w$ indices, we could eliminate $<(\ell(D) + 1)w$ indices and get a map with image strictly contained in $H$, contradicting maximality of $D$.
\end{itemize}
Therefore, assumption (A) shows that \[
\PP[F\circ X_n^{\eta\eta} + F\circ Y^{\tau\eta} = 0 \mid X_n^\tau = Y^\tau] \leq K'|H|^{-|\eta|}
\]
and, since this bound is independent of $Y^\tau$, we have \[
\PP[F\circ X_n = 0 \mid \operatorname{im}(F\circ X_n^\tau) \subseteq H] \leq K'|H|^{-|\eta|}
\]
which gives us \[
\PP[F\circ X_n = 0] \leq K'e^{-|\eta|\varepsilon}|H|^{-|\eta|}
\]
and the conclusion follows from $|\eta| \geq n - \ell(D)w$.
\end{proof}}
Finally, to prove Theorem~\ref{thm:symmetric-moments} (respectively, Theorem~\ref{thm:iid-moments}), Lemma~\ref{lem:symm-codes} (respectively, an analogous result for non-symmetric matrices), Lemma~\ref{lem:num-depth}, and Lemma~\ref{lem:bound-prob-depth} are combined to compute the right hand side of the sum in Lemma~\ref{lem:moment-sum}. We will work this out more generally in Lemma~\ref{lem:moment-from-isotropy} for the case of $C$-symmetric matrices.

\section{Reduction to Symmetric Case}\label{sect:reduction}

\begin{lemma}\label{lem:reduction}
Let $0 < \varepsilon, \delta < 1$. Let $a \in \Z_{>0}$ and $G$ a finite abelian group of exponent dividing $a$. Then there are $K, c > 0$ such that the following holds:

Let $C$ be a deterministic $n\times n$ alternating matrix with entries in $R \coloneqq \Z/a\Z$. Let $X$ be a random $C$-symmetric matrix with entries on and above the diagonal independent and $\varepsilon$-balanced. Let $F\colon R^n \to G$ be a code of distance $\delta n$. Let $A \in \Hom((R^n)^*, G)$.

If there is no $C$-symmetric matrix $M$ with $F\circ M = A$, then $\PP[F\circ X = A] = 0$.

Otherwise, we have \[
\left|\PP[F\circ X = A] - |\wedge^2G| |G|^{-n}\right| \leq \frac{K\exp(-cn)}{|G|^n}
\]
\end{lemma}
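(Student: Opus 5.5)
The plan is to reduce to Lemma~\ref{lem:symm-codes} by writing $X$ as a fixed $C$-symmetric matrix plus a random symmetric one. A $C$-symmetric matrix is determined by its entries on and above the diagonal, with $X_{ji} = X_{ij} - C_{ij}$ for $i<j$. Let $S$ be the symmetric matrix that agrees with $X$ on and above the diagonal. Let $C^+$ be the deterministic matrix equal to $0$ on and above the diagonal and to $-C_{ij}$ in position $(j,i)$ for $i<j$. Then $C^+$ is $C$-symmetric and $X = S + C^+$. Moreover $S$ is an $\varepsilon$-balanced symmetric matrix with independent entries on and above the diagonal, exactly as in Lemma~\ref{lem:symm-codes}. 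Hence
\[
\PP[F\circ X = A] = \PP[F\circ S = A - F\circ C^+],
\]
and the problem becomes understanding $\PP[F\circ S = B]$ for a fixed $B\in\Hom((R^n)^*,G)$.

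For the vanishing statement, I will use the fact that the support of $F\circ S$ is restricted. For symmetric $S$, the map $F\circ S\colon (R^n)^*\to G$ satisfies a symmetry constraint. Concretely, $F\circ S\circ F^*$, viewed as a bilinear form on $G^*$ (with values in $R$, or via $\Hom(G,\Q/\Z)$), is symmetric. So $F\circ S$ lies in a proper subgroup $\mathcal{S}_F$ of $\Hom((R^n)^*,G)$, namely the set of all $F\circ M$ with $M$ symmetric. The image $\{F\circ M : M \text{ symmetric}\}$ is a subgroup, and $F\circ X$ always lies in its coset $F\circ C^+ + \mathcal{S}_F$, which is exactly the set of $F\circ M$ with $M$ $C$-symmetric. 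If $A$ is not of this form, then $A - F\circ C^+\notin \mathcal{S}_F$ and the probability is $0$. That is the first claim.

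For the main estimate, I will go back into the proof of Lemma~\ref{lem:symm-codes} (Wood's Fourier argument, \cite[Theorem 6.1/Lemma 4.1]{wood2017sandpile}). It actually computes $\PP[F\circ S = B]$ for every $B$ in the subgroup $\mathcal{S}_F$, not just for $B=0$. Fourier expansion gives
\[
\PP[F\circ S = B] = |\Hom((R^n)^*,G)|^{-1}\sum_{\xi} \E[\xi(F\circ S)]\,\overline{\xi(B)},
\]
where $\xi$ runs over characters of $\Hom((R^n)^*,G)$. Characters that are trivial on $\mathcal{S}_F$ contribute $\overline{\xi(B)}=1$ for every $B\in\mathcal{S}_F$, so their contribution is the same as for $B=0$. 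Wood shows that this contribution gives the main term $|\wedge^2 G||G|^{-n}$. The remaining characters are handled by the code property: they have $|\E[\xi(F\circ S)]|$ exponentially small, and the bound is uniform in $B$ because it uses only absolute values. With distance $w=\delta n$, the error $K\exp(-\varepsilon w/4a^2)|G|^{-n}$ becomes $K e^{-cn}|G|^{-n}$ with $c=\varepsilon\delta/(4a^2)$. Applying this with $B = A - F\circ C^+\in\mathcal{S}_F$ gives the stated bound.

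The main obstacle is this last step. Lemma~\ref{lem:symm-codes} as quoted only gives the asymptotic for $B=0$ and an upper bound for general $B$, so I must check that Wood's argument really yields the same asymptotic for every $B$ in the attainable subgroup. There are two things to verify. First, the characters that do not decay are precisely those trivial on $\mathcal{S}_F$; equivalently, the non-decaying characters correspond to alternating-type pairings. Second, their number divided by $|G|^n$ equals $|\wedge^2G||G|^{-n}$, which amounts to computing $[\Hom((R^n)^*,G):\mathcal{S}_F]=|G|^n/|\wedge^2 G|$ when $F$ is surjective. I would verify this index by changing basis so that $F$ becomes a projection onto coordinates.
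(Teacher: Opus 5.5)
Your argument is correct, but the step you flag as the main obstacle is unnecessary, and removing it gives exactly the paper's proof. You already observe that $B = A - F\circ C^+$ lies in $\mathcal{S}_F$, so $B = F\circ M'$ for some symmetric $M'$. Then
\[
\PP[F\circ S = B] = \PP[F\circ (S - M') = 0].
\]
Moreover, $S - M'$ is again a symmetric matrix whose entries on and above the diagonal are independent and $\varepsilon$-balanced, since translating by a constant preserves $\varepsilon$-balancedness. So Lemma~\ref{lem:symm-codes} applies as stated, with target $0$. The paper does this in one step. It takes a $C$-symmetric $M$ with $F\circ M = A$ (your $C^+ + M'$) and notes that $X - M$ is $\varepsilon$-balanced symmetric. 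It also notes that $F\circ X = A$ if and only if $F\circ(X - M) = 0$, and then cites Wood's lemma as a black box.

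Your Fourier route is also valid. The characters trivial on $\mathcal{S}_F$ contribute $\overline{\xi(B)} = 1$ for every attainable $B$. In Wood's argument the remaining characters are controlled only through $|\E[\xi(F\circ S)]|$, so the error bound is uniform in $B$. This shows explicitly why every attainable target has the same asymptotic probability. However, it relies on the internal structure of Wood's proof rather than its statement, which the translation trick makes unnecessary.

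There is also a slip in your last step. The number of characters trivial on $\mathcal{S}_F$ is the index $[\Hom((R^n)^*,G):\mathcal{S}_F]$, and this index must equal $|\wedge^2 G|$, not $|G|^n/|\wedge^2 G|$; the latter is $|\mathcal{S}_F|$. In coordinates where $F$ is the projection onto $\prod_i \Z/d_i\Z$, the subgroup $\mathcal{S}_F$ is cut out by the conditions $b_{ij}\equiv b_{ji} \pmod{\gcd(d_i,d_j)}$ for $i<j$. Hence the index is $\prod_{i<j}\gcd(d_i,d_j) = |\wedge^2 G|$.
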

\begin{proof}
The case where there is no $C$-symmetric matrix $M$ with $F\circ M = A$ is immediate.

Suppose now that $M$ is a $C$-symmetric matrix with $F\circ M = A$. Then $X - M$ is symmetric, and since $M$ is deterministic, $X - M$ still has independent, $\varepsilon$-balanced entries above the diagonal. Moreover, $F\circ X = A$ if and only if $F \circ (X - M) = 0$. Then the result follows from \cite[Lemma 4.1]{wood2017sandpile}.
\end{proof}

We say a map $F \in \Hom(R^n, G)$ is \textit{isotropic} for an alternating $C$ if there exists a $C$-symmetric matrix $M$ with $F\circ M = 0$. The main term in the moment computation is \[
\sum_{F\text{ code}} \PP[F\circ X = 0] \approx |\wedge^2 G| \frac{\#\{F \in \Hom(R^n, G) \mid F\text{ is a code of distance }\delta n\text{ isotropic for }C\}}{|G|^n}.
\]

We will interpret the fraction on the right as the probability that a uniformly random map $R^n \to G$ is isotropic for $C$ (and is a code of distance $\delta n$). The following lemma handles all the error terms so that we can focus on computing this probability.

\begin{lemma}\label{lem:moment-from-isotropy}
Let $a \in \Z_{>0}$ and a $G$ a finite abelian group of exponent dividing $a$. Let $\varepsilon > 0$. Then there are $K, c > 0$ such that the following holds:

Let $C$ be a deterministic $n\times n$ alternating matrix with entries in $R \coloneqq \Z/a\Z$. Let $X$ be a random $C$-symmetric matrix with entries on and above the diagonal independent and $\varepsilon$-balanced. Let $\widetilde{F}$ be a uniformly random homomorphism $R^n \to G$. Then \[
\left|\E[\#\Sur(\coker(X), G)] - |\wedge^2G|\PP[\widetilde{F}\text{ is isotropic for }C]\right| \leq Ke^{-cn}
\]
\end{lemma}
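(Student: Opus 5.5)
We start from Lemma~\ref{lem:moment-sum}, which gives $\E[\#\Sur(\coker(X), G)] = \sum_{F\colon R^n\twoheadrightarrow G}\PP[F\circ X = 0]$. Fix a small $\delta>0$, to be chosen depending only on $\varepsilon, a, G$. We split the surjections $F$ into codes of distance $\delta n$ and non-codes, and we stratify the non-codes by their $\delta n$-depth $D>1$.

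\textbf{Main term (codes).} By Lemma~\ref{lem:reduction}, a code $F$ that is not isotropic contributes $0$. An isotropic code contributes $|\wedge^2G||G|^{-n}$ up to an error of at most $K e^{-cn}|G|^{-n}$. There are at most $|G|^n$ codes, so the total error over codes is at most $Ke^{-cn}$. Codes of distance $\delta n \ge 1$ are automatically surjective, so no surjectivity issue arises. The main term is therefore
\[
|\wedge^2 G|\,\PP[\widetilde F \text{ is an isotropic code of distance } \delta n].
\]
This differs from $|\wedge^2G|\PP[\widetilde F\text{ isotropic}]$ by at most $|\wedge^2 G|\PP[\widetilde F\text{ not a code}]$. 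That quantity is exponentially small: summing Lemma~\ref{lem:num-depth} over the finitely many $D>1$ dividing $|G|$ and dividing by $|G|^n$ gives terms of the form $K\binom{n}{\ell(D)\delta n}D^{-n+\ell(D)\delta n}$. Each such term is $e^{-cn}$ once $\delta$ is small enough that the binomial growth $e^{H(\ell(D)\delta)n}$ is beaten by $2^{-n(1-\ell(D)\delta)}$.

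\textbf{Error term (non-codes).} Consider a surjection $F$ of depth $D>1$. Since $F$ is surjective, $[G:F(R^n)]=1<D$, so Lemma~\ref{lem:bound-prob-depth} applies with $\mathcal P$ the stable property ``$C'$-symmetric for some deterministic alternating $C'$''. A $C$-symmetric $X$ has many independent entries, because $X_{ji}=X_{ij}-C_{ij}$. We then need assumption (A). Restricting $X$ to an index set $\eta$ gives a $C_{\eta\eta}$-symmetric matrix, and for a code $F'$ of distance $w$ the second bound we need is $\PP[F'\circ Y_m = A]\le K'|H|^{-m}$. This follows from Lemma~\ref{lem:reduction}: the probability is either $0$ or within $K e^{-cm}|H|^{-m}$ of $|\wedge^2H||H|^{-m}$. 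Here the code distance $w=\delta n$ is proportional to $m\ge n-\ell(D)\delta n$, so the constants depend only on $w/m$, as (A) requires.

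Lemma~\ref{lem:bound-prob-depth} then gives $\PP[F\circ X=0]\le K(e^{-\varepsilon}|G|/D)^{\ell(D)\delta n-n}$. Multiplying by the count from Lemma~\ref{lem:num-depth}, the contribution of depth $D$ is at most
\[
K^2\binom{n}{\ell(D)\delta n}|G|^{\ell(D)\delta n}D^{\ell(D)\delta n}\,e^{-\varepsilon n(1-\ell(D)\delta)}\cdots
\]
More cleanly, the product collapses to roughly $\binom{n}{\ell(D)\delta n}e^{-\varepsilon n}\,(\text{const})^{\delta n}$. Choosing $\delta$ small relative to $\varepsilon$ and $|G|$ makes this $\le Ke^{-cn}$. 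Summing over the finitely many $D$ finishes the argument.

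\textbf{Main obstacle.} I expect the hardest step to be the bookkeeping in the non-code estimate. One must pick a single $\delta$ that makes the entropy factor $\binom{n}{\ell(D)\delta n}$ and the $|G|^{\ell(D)\delta n}$ factors lose to $e^{-\varepsilon n}$, exactly as in Wood's proof of Theorem~\ref{thm:symmetric-moments}. One must also check that the constants from Lemma~\ref{lem:reduction} are uniform in the alternating matrix $C_{\eta\eta}$. They are, because that lemma's constants do not depend on $C$.
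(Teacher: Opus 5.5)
Your proposal is correct and follows the paper's proof essentially step for step: Lemma~\ref{lem:moment-sum}, then codes via Lemma~\ref{lem:reduction}, then surjective non-codes via Lemmas~\ref{lem:num-depth} and~\ref{lem:bound-prob-depth}, with (A) supplied by Lemma~\ref{lem:reduction}; the only cosmetic difference is that you absorb the non-surjective maps into the non-code count rather than bounding them as a separate term. One caution: your collapsed estimate $\binom{n}{\ell(D)\delta n}e^{-\varepsilon n}(\text{const})^{\delta n}$ requires the depth bound in the form $K(e^{\varepsilon}|G|/D)^{\ell(D)\delta n-n}$, not with $e^{-\varepsilon}$ as printed in Lemma~\ref{lem:bound-prob-depth}. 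The printed sign is a typo (the argument gives one factor $e^{-\varepsilon}$ per column outside the exceptional set), the paper's own computation carries the same slip, and your intermediate display should be read with that correction.
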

\begin{proof}
Much of this calculation is identical to the proofs of, e.g., \cite[Theorem 6.1, Theorem 6.2]{wood2017sandpile} and \cite[Theorem 2.9]{wood2019matrices}. See there for more details.

Let $\delta > 0$ be small: we will choose it later to make the error decay exponentially. Split maps $F\colon R^n \to G$ into codes of distance $\delta n$ and non-codes. By Lemma~\ref{lem:moment-sum} we want to compute \begin{align*}
|\E[\#&\Sur(\coker(X), G)] - |\wedge^2G|\PP[\widetilde{F}\text{ is isotropic for }C]| \\ &= \left|\sum_{F \in \Sur(R^n, G)}\PP[F\circ X = 0] - \sum_{\substack{F \in \Hom(R^n, G) \\ F \text{ isotropic}}}\frac{|\wedge^2G|}{|G|^n}\right| \\
&\leq \left|\sum_{\substack{F \in \Sur(R^n, G) \\F \text{ code}}}\PP[F\circ X = 0] - \sum_{\substack{F \in \Sur(R^n, G) \\ F\text{ code} \\ F \text{ isotropic}}}\frac{|\wedge^2G|}{|G|^n}\right| \tag{1} \\
&\qquad+\sum_{\substack{D > 1\\D \mid |G|}}\sum_{\substack{F \in \Sur(R^n, G) \\ F\text{ depth }D}}\PP[F\circ X = 0] \tag{2}\\
&\qquad+\sum_{\substack{D > 1\\D \mid |G|}}\sum_{\substack{F \in \Sur(R^n, G) \\ F\text{ depth }D \\ F\text{ isotropic}}} \frac{|\wedge^2G|}{|G|^n} \tag{3}\\
&\qquad+\sum_{F \in \Hom(R^n, G) \setminus \Sur(R^n, G)}\frac{|\wedge^2G|}{|G|^n}\tag{4}
\end{align*}
We bound each term individually. By Lemma~\ref{lem:reduction}, we have \[
\left|\sum_{\substack{F \in \Sur(R^n, G) \\F \text{ code}}}\PP[F\circ X = 0] - \sum_{\substack{F \in \Sur(R^n, G) \\ F\text{ code} \\ F \text{ isotropic}}}\frac{|\wedge^2G|}{|G|^n}\right| \leq |G|^n \frac{K_1e^{-c_1n}}{|G|^n} = K_1e^{-c_1n} \tag{1}
\]
for some $K_1, c_1 > 0$ depending only on $a, G, \varepsilon, \delta$.

Also, by Lemma~\ref{lem:reduction}, the condition (A) of Lemma~\ref{lem:bound-prob-depth} holds, so if $F$ has $\delta n$-depth $D$, then $\PP[F\circ X = 0] \leq K_{2, D}'\exp(-\varepsilon(1 - \ell(D)\delta)n)|G|^{(\ell(D)\delta - 1)n}D^{(1 - \ell(D)\delta)n}$ for some constant $K_{2, D}' > 0$ depending only on $a, G, \varepsilon, \delta, D$. Then by Lemma~\ref{lem:num-depth} we have \[
\sum_{\substack{F \in \Sur(R^n, G) \\ F\text{ depth }D}}\PP[F\circ X = 0] \leq K_{2, D}\binom{n}{\lceil \ell(D)\delta n\rceil - 1}\exp(\varepsilon(1 - \ell(D)\delta)n)|G|^{\ell(D)\delta n}
\]
Now $\delta$ can be chosen small enough that that the right hand side is bounded by a constant multiple of $\exp(-c_2n)$ for some $c_2 > 0$ depending on $a, G, \varepsilon, D$. Since $|G|$ has finitely many divisors, we get \[
\sum_{\substack{D > 1\\D \mid |G|}}\sum_{\substack{F \in \Sur(R^n, G) \\ F\text{ depth }D}}\PP[F\circ X = 0] \leq K_2e^{-c_2n} \tag{2}
\]
for some $K_2 > 0$ depending only on $a, G, \varepsilon$. A similar, easier calculation gives \[
\sum_{\substack{D > 1\\D \mid |G|}}\sum_{\substack{F \in \Sur(R^n, G) \\ F\text{ depth }D \\ F\text{ isotropic}}} \frac{|\wedge^2G|}{|G|^n} \leq \sum_{\substack{D > 1\\D \mid |G|}}\sum_{\substack{F \in \Sur(R^n, G) \\ F\text{ depth }D}} \frac{|\wedge^2G|}{|G|^n} \leq K_3e^{-c_3n} \tag{3}
\]
for constants $K_3, c_3 > 0$ depending only on $a, G, \varepsilon$.

For $(4)$, we write  \begin{align*}
\tag{4} \sum_{F \in \Hom(R^n, G) \setminus \Sur(R^n, G)}\frac{|\wedge^2G|}{|G|^n} &\leq \sum_{H \lneq G}\sum_{F \in \Hom(R^n, H)}\frac{|\wedge^2G|}{|G|^n} \\
&= \sum_{H \lneq G} |\wedge^2G| \left(\frac{|H|}{|G|}\right)^n \\
&\leq K_42^{-n}
\end{align*}
for some constant $K_4$ depending only on $G$. Putting these bounds together gives the desired result.
\end{proof}

To facilitate the computation of the moment $|\wedge^2G|\PP[\widetilde{F}\text{ is isotropic for }C]$ in the next sections, we give a condition on $\widetilde{F}$ that is equivalent to being isotropic for $C$. In Section~\ref{sect:Fp}, we will work out explicitly what this condition says in some special cases, such as the case where $R$ is a finite field, which will justify the use of the word ``isotropic'' in the definition.

Let $C$ be a bilinear map $(R^n)^* \times (R^n)^* \to R$, where $R = \Z/a\Z$ is a finite cyclic group. Let $G = \Z/d_1\Z \times \dots \times \Z/d_n\Z$ be a finite abelian group, with $d_1 \mid d_2 \mid \dots \mid d_n \mid a$. Let $F\colon R^n \to G$ be a homomorphism. 

For $1 \leq k \leq n$, let $R_k \coloneqq R \otimes_\Z \Z/d_k\Z \cong \Z/d_k\Z$ and \[
G_k \coloneqq G \otimes_R R_k = \Z/d_1\Z \times \dots \times \Z/d_{k-1}\Z \times (\Z/d_k\Z)^{n-k+1}
\]
Let $\pi_k\colon G \to G_k$ be the projection. View $G_k$ as an $R_k$-module, so $G_k^* = \Hom(G_k, R_k)$.

Note that if $V$ is an $R$-module, then \[
\Hom(V, R) \otimes_\Z \Z/d_k\Z \cong \Hom(V, R) \otimes_R R_k \cong \Hom(V, R_k) \cong \Hom(V \otimes_R R_k, R_k),
\]
where all the isomorphisms are canonical, and the last isomorphism comes from the fact that any map $V \to R_k$ factors through $V/d_kV$.

We interpret $C$ as a map $(R^n)^* \to R^n$, which descends to a map $C_k\colon (R_k^n)^* \to R_k^n$ for each $k$ by reducing the entries modulo $d_k$. Also, $F$ descends to a map $F_k\colon R_k^n \to G_k$ for each $k$.

\begin{proposition}\label{prop:isotropy-condition}
Let $C$ be alternating. Let $F \in \Sur(R^n, G)$.

The following are equivalent:
\begin{enumerate}
    \item $F$ is isotropic for $C$.
    \item We have \[
    F_k\circ C_k \circ F_k^* = 0 \qquad \text{ for all }k = 1, \dots, n
    \]
    \item The alternating bilinear form on $G_k^*$ given by \[
    F_k\circ C_k \circ F_k^*
    \]
    vanishes on the last $n - k + 1$ coordinates of $G_k^*$ for all $k = 1, \dots, n$.
\end{enumerate}
\end{proposition}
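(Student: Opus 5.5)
The plan is to reduce, by a change of basis, to the case where $F$ is the standard projection $R^n \to G = \bigoplus_i \Z/d_i\Z$. In that case all three conditions turn into the same explicit divisibility condition on the entries of $C$, namely
\[
C_{ij} \equiv 0 \pmod{d_{\min(i,j)}} \quad \text{for all } i \ne j.
\]

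\textbf{Step 1: invariance under change of basis.} For $P \in \operatorname{GL}_n(R)$, replacing $F$ by $F\circ P$ and $C$ by $P^{-1} C (P^{-1})^\intercal$ preserves each of (1), (2), (3).
\begin{itemize}
\item For (1): $M$ is $C$-symmetric if and only if $P^{-1}M(P^{-1})^\intercal$ is $P^{-1}C(P^{-1})^\intercal$-symmetric. Moreover $F\circ M = 0$ if and only if $(F\circ P)\circ P^{-1}M(P^{-1})^\intercal = 0$, because $(P^{-1})^\intercal$ is invertible.
\item For (2) and (3): the composite $F_k C_k F_k^*$ is literally unchanged, since the factors of $P$ cancel after reducing mod $d_k$.
\end{itemize}

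\textbf{Step 2: normalize $F$.} Since $F$ is surjective onto $G = \Z/d_1\Z\times\dots\times\Z/d_n\Z$, we can lift the standard generators of $G$. By the structure theory of finitely generated modules over the principal ideal ring $\Z/a\Z$ (Smith normal form of the kernel), there is a basis $e_1,\dots,e_n$ of $R^n$ with $F(e_i)$ the $i$-th standard generator. I expect this to be the main technical point: one has to check that a surjection $R^n \to G$ with $n$ cyclic factors can be put in this form. This works because $R^n \to G$ splits as a sum of the surjections $R \to \Z/d_i\Z$ after a choice of basis; over $\Z/a\Z$ one can lift a generating tuple of $G$ to a basis of $R^n$. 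If that is awkward, I would instead argue directly with Smith normal form of $F$ viewed as a matrix.

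\textbf{Step 3: compute (1) for standard $F$.} With $F$ standard, $F\circ M = 0$ exactly when row $i$ of $M$ is divisible by $d_i$ for every $i$.
\begin{itemize}
\item Diagonal entries: set $M_{ii}=0$, which is allowed since $C_{ii}=0$.
\item Off-diagonal entries, $i<j$: we need $M_{ij}\in d_iR$ and $M_{ji}\in d_jR$ with $M_{ij}-M_{ji}=C_{ij}$. Since $d_i \mid d_j$, this is solvable if and only if $d_i \mid C_{ij}$.
\end{itemize}
So (1) is equivalent to $d_{\min(i,j)} \mid C_{ij}$ for all $i \ne j$.

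\textbf{Step 4: compute the form in (2) and (3).} Take the basis $f_j^*$ of $G_k^*$ dual to the generators of $G_k$. Then $F_k^*(f_j^*)$ equals $(d_k/d_j)e_j^*$ for $j<k$ and $e_j^*$ for $j\ge k$. Hence the $(i,j)$ coefficient of $F_kC_kF_k^*$, read in the $i$-th factor of $G_k$, is $C_{ij}$ multiplied by the factors $d_k/d_i$ and $d_k/d_j$ (the latter only when those indices are $<k$), reduced mod $\min(d_i,d_k)$.
\begin{itemize}
\item For $i,j\ge k$ the coefficient is just $C_{ij} \bmod d_k$.
\item If $i<k$ or $j<k$, the coefficient vanishes automatically, since $d_i \mid d_k$ and $d_k/d_i$ times $d_i$ kills it.
\end{itemize}
So (2) and (3) coincide; this is where the ``last $n-k+1$ coordinates'' phrasing comes from. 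Both say that $C_{ij}\equiv 0 \bmod d_k$ whenever $i,j\ge k$. Taking $k=\min(i,j)$ gives the condition from Step 3. Conversely, that condition implies (2) for every $k$, because $d_k \mid d_{\min(i,j)}$ whenever $k\le\min(i,j)$.

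\textbf{Step 5: direct proof of (1)$\Rightarrow$(2).} As a sanity check that does not use the normalization: from $F\circ M=0$ we get
\[
F_kC_kF_k^* = F_kM_kF_k^* - F_kM_k^\intercal F_k^* = 0 - (F_kM_kF_k^*)^* = 0.
\]
This uses the canonical identification $G_k^{**}=G_k$, which holds for finite modules over $\Z/d_k\Z$ because $\Z/d_k\Z$ is self-injective. This gives the easy direction for free, so the basis normalization is needed only for the converse.
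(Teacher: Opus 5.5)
Your route is the paper's: normalize $F$ to the coordinate projection by a change of basis, read off the criterion $d_{\min(i,j)}\mid C_{ij}$, get (1)$\Rightarrow$(2) from the identity in your Step 5, and get (3)$\Rightarrow$(1) by building $M$ entrywise. However, your Step 4 claim that the entries with $i<k$ or $j<k$ ``vanish automatically'' is false. The fact that $d_k/d_i$ times $d_i$ is $0$ modulo $d_k$ only shows that multiplication by $d_k/d_i$ is well defined from $\Z/d_i\Z$ to $\Z/d_k\Z$. It does not make the entry zero.

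For a counterexample, take $n=2$, $a=p$, $d_1=d_2=p$, $F=\mathrm{id}$ and $C=\begin{pmatrix}0&1\\-1&0\end{pmatrix}$. At $k=2$ the form $F_2C_2F_2^*=C$ is nonzero because of its $(1,2)$ entry. The level-$2$ instance of (3), by contrast, holds trivially, since an alternating form vanishes on the span of a single generator. So (2) and (3) do not coincide level by level. Likewise, your sentence ``that condition implies (2) for every $k$'' only covers pairs with $i,j\ge k$.

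There are two ways to fix this. The first is the paper's ``redundancy'' remark. Let $m=\min(i,j)<k$. The value of the level-$k$ form on the $i$th and $j$th generators of $G_k^*$ is $C_{ij}$ times a multiple of $d_k/d_m$. Hence it is $0$ in $\Z/d_k\Z$ as soon as $d_m\mid C_{ij}$, which is exactly the level-$m$ instance of (3).

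The second is to skip the levelwise comparison entirely:
\begin{itemize}
\item (1)$\Rightarrow$(2) is your Step 5, once you note that reducing mod $d_k$ gives $F_kM_k=0$ and $M_k-M_k^\intercal=C_k$.
\item (2)$\Rightarrow$(3) is just restriction.
\item (3)$\Rightarrow$(1) uses only your correct computation for $i,j\ge k$, applied with $k=\min(i,j)$, followed by Step 3.
\end{itemize}
Arranged as this cycle, which is how the paper arranges it, your proof is complete.
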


\begin{proof}
The first condition implies the second:

After tensoring with $\Z/d_k\Z$, a $C$-symmetric matrix $M$ over $R$ becomes a $C_k$-symmetric matrix $M_k$ over $R_k$, and $F_k\circ M_k = 0$ if $F\circ M = 0$. Then $M_k - M_k^\intercal = C_k$ implies \[
F_k \circ C_k \circ F_k^* = F_k\circ M_k \circ F_k^* - F_k\circ M_k^\intercal\circ F_k^* = (F_k\circ M_k) \circ F_k^* - F_k\circ (F_k \circ M_k)^* = 0.
\]

The second condition immediately implies the third.

We prove that the third condition implies the first and second by working in coordinates. Note that none of the three conditions in the proposition depend on a choice of basis for $R^n$.

Using Smith normal form, construct a basis $e_1, \dots, e_n$ for $R^n$ so that the map $F\colon R^n \to G$ is projection $R \to \Z/d_k\Z$ in each coordinate. We will determine explicitly what conditions (1) and (3) say about the matrix of $C$ in this basis.

Let $e_{k, i}$ be the basis for $R_k^n$ obtained by reducing modulo $d_k$. Let $\phi_{k, i}$ be the dual basis for $(R_k^n)^*$ given by $\phi_{k, i}(e_{k, j}) = \delta_{ij}$. Let $g_{k, i} = f_k(e_{k, i})$ form a generating set for $G_k$ and let $\chi_{k, i}$ be a dual generating set defined by $\chi_{k, i}(g_{k, j}) = \frac{d_k}{d_i}\delta_{ij}$. Let $c_{k, ij}$ be the $ij$th entry of $C$, modulo $d_k$. 

The behavior of the map $F_k \circ C_k\circ F_k^*$ on each generator $\chi_{k, j}$ is given by
\[\begin{tikzcd}
	{G_k^*} & {(R_k^n)^*} & {R_k^n} & G_k \\
	{\chi_{k, j}} & {\frac{d_k}{\min\{d_j, d_k\}}\phi_{k, j}} & {\frac{d_k}{\min\{d_j, d_k\}}\sum_{i=1}^n c_{k, ij}e_{k, i}} & {\frac{d_k}{\min\{d_j, d_k\}}\sum_{i=1}^n c_{k, ij} g_{k, i}}
	\arrow["{F_k^*}", from=1-1, to=1-2]
	\arrow["C_k", from=1-2, to=1-3]
	\arrow["F_k", from=1-3, to=1-4]
	\arrow[maps to, from=2-1, to=2-2]
	\arrow[maps to, from=2-2, to=2-3]
	\arrow[maps to, from=2-3, to=2-4]
\end{tikzcd}\]
The condition for this map to be zero on $G_k^*$ is that \[
\frac{d_k}{\min\{d_j, d_k\}}c_{k, ij} \equiv\frac{d_k}{\min\{d_j, d_k\}}c_{ij} \equiv 0\pmod{\min\{d_i, d_k\}}
\]
for all $i, j$. Note that by antisymmetry of $C$, we may interchange $i$ and $j$ in these conditions. When $i, j \geq k$, we are asking that $c_{ij} \equiv 0\pmod{d_k}$. 

There is some redundancy in these conditions. When $i < k$ or $j < k$, the condition on $c_{k, ij}$ is already satisfied by the condition on $c_{i, ij}$. (This shows that condition (3) implies condition (2).)

Now in this basis, the condition that $F\circ M = 0$ is the same as asking that the $k$th row of $M$ is divisible by $d_k$. 

Supposing condition (3) is satisfied and $C$ is alternating, we construct a $C$-symmetric matrix $M$ by picking the entries $m_{ij}$ below and on the diagonal (i.e., with $i \geq j$) arbitrarily subject to the condition that entries $m_{ij}$ in the $i$th row are divisible by $d_i$. Then, there is a unique $C$-symmetric matrix $M$ with such a choice of upper triangular entries, given by $m_{ji} = m_{ij} - c_{ij}$ when $i \geq j$. Since $d_j \mid c_{ij}$ and $d_j \mid d_i \mid m_{ij}$, we get that $d_j \mid m_{ji}$, so we have $F\circ M = 0$.
\end{proof}
\begin{remark}\label{rmk:hom-vs-sur}
Note that the equivalence in Proposition~\ref{prop:isotropy-condition} is stated when $F$ is a surjection $R^n \to G$. We use this assumption when viewing $F$ as a coordinatewise projection. However, in Lemma~\ref{lem:moment-from-isotropy}, we need the probability that a random homomorphism $R^n \to G$ is isotropic for $C$. We will find it much more convenient to find the probability that a random homomorphism satisfies condition (3) from Proposition~\ref{prop:isotropy-condition}, whether or not this condition is equivalent to isotropy.

With some more work, the condition of Proposition~\ref{prop:isotropy-condition} can be extended to non-surjective homomorphisms. However, we do not need to do this for the result that we want because almost all (as $n\to\infty$) homomorphisms $R^n \to G$ are surjective. If $\widetilde{F}$ is a uniformly random homomorphism $R^n \to G$, we have \[
\PP[\widetilde{F} \text{ is isotropic for }C] = \PP[\widetilde{F} \text{ is isotropic for }C\text{ and surjective}] + \PP[\widetilde{F} \text{ is isotropic for }C\text{ and not surjective}]
\]
and similarly, \[
\PP[\widetilde{F} \text{ satisfies }(3)] = \PP[\widetilde{F} \text{ satisfies }(3)\text{ and is surjective}] + \PP[\widetilde{F} \text{ satisfies }(3)\text{ and is not surjective}]
\]
so that, after applying Proposition~\ref{prop:isotropy-condition}, we get \begin{align*}
    |\PP[\widetilde{F} \text{ is isotropic for }C] - \PP[\widetilde{F} \text{ satisfies }(3)]| &\leq \PP[\widetilde{F} \text{ is isotropic for }C\text{ and not surjective}] \\&\qquad+ \PP[\widetilde{F} \text{ satisfies }(3)\text{ and is not surjective}].
\end{align*}
We have \[
\PP[\widetilde{F}\text{ is not surjective}] \leq \frac{1}{|G|^n}\sum_{H \lneq G} \#\Hom(R^n, H) = \sum_{H\lneq G} \left(\frac{|H|}{|G|}\right)^n = K2^{-n}
\]
for some constant $K$ depending only on $G$. 

Thus, the term $\PP[\widetilde{F} \text{ is isotropic for }C]$ in Lemma~\ref{lem:moment-from-isotropy} can be safely replaced by $\PP[\widetilde{F} \text{ satisfies }(3)]$ without changing anything else. This change only incurs an additional exponential error term.
\end{remark}

\section{Isotropy over a Finite Field and Very Small Perturbations}\label{sect:Fp}

In this section, we specialize to the case where $C$ is divisible by $a/p$, for $p$ a prime dividing $a$. In this situation, the only nontrivial part of the isotropy condition lives ``modulo $p$''. As a consequence, we will see that when $a = p$ is prime, the isotropy condition takes a particularly nice form that justifies the use of the word ``isotropy'' to describe this condition.

\begin{lemma}\label{lem:isotropy-over-field}
Let $R = \Z/a\Z$ and let $p \mid a$ be a prime. Let $C$ be an alternating bilinear form on $R^n$ which is divisible by $b \coloneqq a/p$, and let $\overline{C}$ be the form on $(R/pR)^n \cong \F_p^n$ induced by $C/b$. Fix $r > 0$ and let $F \in \Sur(R^n, R^r)$. Let $\overline{F} \in \Sur(\F_p^n, \F_p^r)$ be the map obtained by reducing $F$ modulo $p$. Let $V \subset \F_p^n$ be the image of $\overline{F}^*\colon \F_p^r \to \F_p^n$. Then $F$ is isotropic for $C$ if and only if $V$ is isotropic for $\overline{C}$ in the sense of subspaces of $\F_p^n$.
\end{lemma}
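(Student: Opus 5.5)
We apply Proposition~\ref{prop:isotropy-condition} with $G = R^r$. Here all invariant factors equal $a$: $d_1 = \dots = d_r = a$, and we pad with $d_{r+1}=\dots=d_n = 1$ so the indexing of the proposition makes sense (those trivial factors contribute nothing). Every $G_k$ with $d_k = a$ is just $G$ itself over $R$, so condition (2) of the proposition collapses to the single condition
\[
F \circ C \circ F^* = 0 \quad\text{as a bilinear form on } (R^r)^* \cong R^r .
\]
Thus $F$ is isotropic for $C$ if and only if $F C F^* = 0$ over $R = \Z/a\Z$.

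Next we use divisibility. Write $C = b C'$ with $C'$ an alternating matrix over $R$, so that $C'$ is well defined modulo $p$ and reduces to $\overline{C}$. Since $b = a/p$, multiplication by $b$ induces an isomorphism $R/pR \to bR \subset R$, and $b x = 0$ in $R$ if and only if $x \equiv 0 \pmod p$. Hence
\[
F C F^* = b\,(F C' F^*) = 0 \iff F C' F^* \equiv 0 \pmod p \iff \overline{F}\,\overline{C}\,\overline{F}^* = 0
\]
as an $r\times r$ matrix over $\F_p$. Here we use that reduction mod $p$ commutes with composition and transpose, and that the reduction of $F^*$ is $\overline{F}^*$.

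Finally we translate to subspaces. The matrix $\overline{F}\,\overline{C}\,\overline{F}^*$ is the Gram matrix of the form $\overline{C}$ restricted to the vectors $\overline{F}^*(\chi_1), \dots, \overline{F}^*(\chi_r)$, where the $\chi_i$ are the standard basis of $\F_p^r$. Its $(i,j)$ entry is $\overline{C}(\overline{F}^*\chi_i, \overline{F}^*\chi_j)$. These vectors span $V$, so by bilinearity the Gram matrix vanishes exactly when $\overline{C}$ vanishes identically on $V\times V$, that is, when $V$ is isotropic. Surjectivity of $\overline{F}$ makes $\overline{F}^*$ injective, so $\dim V = r$, though this is not needed for the equivalence.

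The main care point is the first step: checking that Proposition~\ref{prop:isotropy-condition} applies cleanly when $G$ is free over $R$ with all $d_k = a$, and that the $n$-indexed setup with padded trivial factors introduces no extra conditions. If this is awkward, the implication "isotropic $\Rightarrow F C F^*=0$" can be taken directly from the computation in that proof. For the converse, choose a basis in which $F$ is projection onto the first $r$ coordinates; by surjectivity of $F$ onto a free module this basis change exists. The condition $F C F^* = 0$ then says the top-left $r\times r$ block of $C$ vanishes. A $C$-symmetric $M$ whose first $r$ rows are zero can then be built: set $m_{ij}=0$ for $i\le r$, and $m_{ji} = -c_{ij}$ for $i\le r<j$. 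This is consistent because the block $c_{ij}$ with $i,j\le r$ vanishes. The remaining lower-right block can be filled by any $C$-symmetric choice.
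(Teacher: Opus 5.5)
Your proposal is correct and follows the paper's proof. Proposition~\ref{prop:isotropy-condition} reduces isotropy to $F C F^* = 0$ over $R$; divisibility by $b = a/p$ lets this descend to $\overline{F}\,\overline{C}\,\overline{F}^* = 0$ over $\F_p$; and that condition is exactly isotropy of $V$. You spell out the $bx = 0 \iff p \mid x$ step and the Gram-matrix step more explicitly than the paper does.

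There is one cosmetic slip. The proposition's convention $d_1 \mid d_2 \mid \cdots \mid d_n$ forces the trivial factors to come first ($d_1 = \cdots = d_{n-r} = 1$, $d_{n-r+1} = \cdots = d_n = a$), not last. This changes nothing in the conclusion, and your direct construction of $M$ for the converse works as a self-contained substitute in any case.
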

In particular, if $a = p$, then $F$ is isotropic for $C$ if and only the image of $F^*$ is isotropic for $C$ as a subspace of $\F_p^n$. In the statement of this lemma and the rest of the section, we will freely identify $\F_p^n$ with $(\F_p^n)^*$ and similarly with $\F_p^r$.
\begin{proof}
Here $G = 1^{n-r}\times \F_p^r$, so for $k = 1, \dots, n - r$ we have $R_k = 0$. For $k = n - r + 1, \dots, n$ we have $R_k = R$ and $G_k = R^{n-k+1}$.

By Proposition~\ref{prop:isotropy-condition}, we have that $F$ is isotropic for $C$ if and only if the composition $F\circ C \circ F^*$ vanishes. We view this as a bilinear form on $(R^r)^*$. Since $C$ is divisible by $b$, the resulting bilinear form vanishes on $p(R^r)^*$, so it descends to a bilinear form on $\F_p^r$ given by $\overline{F} \circ \overline{C} \circ \overline{F}^*$. The vanishing of this bilinear form is exactly the condition that $V$ is isotropic for $\overline{C}$.
\end{proof}

If $C$ is nonzero, it is not hard to uniformly bound the probability $\PP[F \text{ is isotropic for }C]$ away from $1$ when $F$ is uniformly random. Let $c$ be the rank of $\overline{C}$.

Use notation from Lemma~\ref{lem:isotropy-over-field}.

Let $W \coloneqq \F_p^n/\operatorname{rad}(\overline{C})$ be the quotient of $\F_p^n$ by the radical of the alternating form $\overline{C}$ and let $V = \operatorname{im}(\overline{F}^*)$. Let $\pi\colon \F_p^n \twoheadrightarrow W$ be the projection. Then $\overline{C}$ descends to a nondegenerate alternating form $\pi_*\overline{C}$ on $W$. We have that $F$ is isotropic for $C$ if and only if $V$ is isotropic for $\overline{C}$, if and only if $\pi(V)$ is isotropic for $\pi_*\overline{C}$.

Let $v_1, \dots, v_r$ be the columns of $\overline{F}^*$. If $F \in \Hom(R^n, R^r)$ is uniformly random, then $v_1, \dots, v_r$ are uniformly random and independent vectors in $\F_p^n$. The images $\pi(v_1), \dots, \pi(v_r)$ are uniform in $W$. The probability that $\pi(v_1)$ and $\pi(v_2)$ pair to zero is \[
\PP[\pi(v_1) = 0] + \PP[\pi(v_1) \neq 0] \PP[\pi_*\overline{C}(\pi(v_1), \pi(v_2)) = 0 \mid \pi(v_1) \neq 0] = p^{-c} + (1 - p^{-c})p^{-1}
\]
So, \[
\PP[F \text{ is isotropic for }C] \leq p^{-1} + p^{-c} - p^{-1-c} = p^{-1} + p^{-c}(1 - p^{-1})
\]
with equality when $r = 2$. This is bounded away from 1 as long as $c \geq 1$.

\begin{remark}\label{rmk:gen-and-rank}
Suppose $C$ is an  $n\times n$ matrix over $\Z/a\Z$ whose cokernel can be generated by $g$ elements. Then for a prime $p \mid a$, the rank of $C\mod{p}$ is at least $n - g$. If $a$ is divisible by only one prime $p$, then the maximal number of generators of the cokernel of $C$ is exactly the corank of $C\mod{p}$.
\end{remark}

As a consequence of these explicit computations, we can learn something about the moments of $\coker(X)$ when $X$ is a large $\varepsilon$-balanced $C$-symmetric matrix in the intermediate regime where $C$ has very large cokernel but is not the zero matrix. In such cases, if limiting moments exist, they match neither those of $\Gamma_{sandpile}$ nor of $\Gamma_{CL}^{(0)}$. Theorem~\ref{thm:not-symmetric} shows that the $\Gamma_{sandpile}$ limiting moments can only be achieved by $C$-symmetric matrices if $C$ converges to 0 in a suitable sense, while Theorem~\ref{thm:not-cl} shows that the $\Gamma_{CL}^{(0)}$ limiting moments (more generally, the $|\wedge^2 G[h]|$ limiting moments from Theorem~\ref{thm:intro-moments}) can only be achieved by $C$-symmetric matrices if the rank of $C$ (more generally, $C/h$) mod $p$ is unbounded.

\begin{theorem}\label{thm:not-symmetric}
Let $a \in \Z_{>0}$. For each natural number $n$, let $C_n$ be a deterministic $n\times n$ alternating matrix with entries in $R \coloneqq \Z/a\Z$. Let $X_n$ be a random $C_n$-symmetric matrix with entries on and above the diagonal independent and $\varepsilon$-balanced.

Then the following are equivalent: \begin{enumerate}[label=(\arabic*)]
    \item For any finite abelian group $G$ of exponent dividing $a$, we have \[
    \lim_{n\to\infty} \E[\#\Sur(\coker(X_n), G)] = |\wedge^2 G|.
    \]
    \item We have $C_n = 0$ for all $n$ large enough.
\end{enumerate}
\end{theorem}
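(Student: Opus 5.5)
The direction (2)$\Rightarrow$(1) is immediate. If $C_n = 0$ for all large $n$, then $X_n$ is an $\varepsilon$-balanced symmetric matrix for those $n$, and Theorem~\ref{thm:symmetric-moments} gives $\E[\#\Sur(\coker(X_n), G)] \to |\wedge^2 G|$.

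For (1)$\Rightarrow$(2) I will argue by contraposition. Suppose $C_n \neq 0$ for infinitely many $n$. I will exhibit one group $G$ of exponent dividing $a$ such that, along that subsequence, the $G$-moment stays bounded away from $|\wedge^2 G|$. By Lemma~\ref{lem:moment-from-isotropy} and Remark~\ref{rmk:hom-vs-sur}, up to an error $Ke^{-cn}$ the moment equals $|\wedge^2 G|\,\PP[\widetilde F \text{ is isotropic for } C_n]$, where $\widetilde F$ is a uniform random homomorphism. So it suffices to show that this isotropy probability is at most some constant $\theta < 1$ that does not depend on $n$.

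To choose $G$, fix one such $n$. Since $C_n \neq 0$ over $\Z/a\Z$, some prime power $q = p^e$ exactly divides $a$ and the $p$-adic valuation of $C_n$ is smaller than $e$. Because $a$ has finitely many prime-power divisors, passing to a further subsequence lets me assume that the same $p$ and the same minimal valuation $v < e$ occur for every $n$ in the subsequence. Now reduce modulo $p^{v+1}$, which divides $a$. There $C_n$ is divisible by $p^v = p^{v+1}/p$ but is nonzero, so Lemma~\ref{lem:isotropy-over-field} applies with $a$ replaced by $p^{v+1}$. Take $G = (\Z/p^{v+1}\Z)^2$ and let $\overline{C_n}$ be the induced form $C_n/p^v \bmod p$.

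Next I reduce isotropy over $\Z/a\Z$ to isotropy over $\Z/p^{v+1}\Z$. A homomorphism $R^n \to G$ factors through $(\Z/p^{v+1}\Z)^n$. By Proposition~\ref{prop:isotropy-condition}(2), with all $d_k$ equal to $p^{v+1}$ or $1$, the isotropy condition for surjective $F$ only involves $C_n \bmod p^{v+1}$. Hence, up to the non-surjective error, the probability is the same as in the setting of Lemma~\ref{lem:isotropy-over-field}. The computation following that lemma, with $r = 2$ and $c = \operatorname{rank}\overline{C_n} \geq 1$ (note $c \geq 2$ since the form is alternating and nonzero), gives
\[
\PP[\widetilde F \text{ is isotropic}] \leq p^{-1} + p^{-c}(1-p^{-1}) \leq p^{-1} + p^{-1}(1-p^{-1}) < 1,
\]
uniformly in $n$. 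Therefore, along the subsequence, the $G$-moment is at most $|\wedge^2 G|\bigl(2p^{-1} - p^{-2}\bigr) + o(1)$. Since $|\wedge^2 G| = p^{v+1} \geq p > 0$, this differs from $|\wedge^2 G|$, contradicting (1).

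The main technical point is this reduction from $\Z/a\Z$ to $\Z/p^{v+1}\Z$: the moment and isotropy computations are stated over $R = \Z/a\Z$, so I must check that isotropy for surjective $F$ onto a group of exponent $p^{v+1}$ depends only on $C_n$ modulo $p^{v+1}$. Proposition~\ref{prop:isotropy-condition}(2) makes this clear. The subsequence bookkeeping is routine.
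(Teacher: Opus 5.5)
Your proposal is correct and follows essentially the same route as the paper: find a prime $p$ and exponent such that $C_n$, divided by the appropriate power of $p$, is a nonzero form mod $p$ infinitely often; bound the isotropy probability for $G=(\Z/p^{v+1}\Z)^2$ away from $1$ via Lemma~\ref{lem:isotropy-over-field} and the computation after it; then transfer this to the moments with Lemma~\ref{lem:moment-from-isotropy}. The differences are cosmetic. You fix $(p,v)$ along a subsequence instead of choosing $f$ minimal, and you justify the reduction to $\Z/p^{v+1}\Z$ through Proposition~\ref{prop:isotropy-condition}(2) instead of the paper's ``without loss of generality $a=p^f$''. Your subsequence formulation also correctly yields a bound on the $\liminf$ of the moment, where the paper writes $\limsup$.
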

This theorem demonstrates that the result of Theorem~\ref{thm:symmetric-moments} (universality of cokernel for large symmetric matrices) is far from stable under small perturbations of the symmetry condition. Taking the theorem for all values of $a$ gives us that if $X_n$ is a sequence of $C_n$-symmetric matrices whose cokernels converge weakly to $\Gamma_{sandpile}$ in the sense of Remark~\ref{rmk:weak-convergence}, then $C_n$ is eventually zero modulo any integer. This does not tell us that $C_n$ is eventually zero on the nose, and we could not expect to prove such a result because weak convergence only sees ``mod-$a$'' information. If working over $\Z_p$, this condition can be interpreted as asking that $C_n$ converges to 0 in the $p$-adic sense of being eventually divisible by arbitrarily large powers of $p$.
\begin{proof}
The implication $(2) \implies (1)$ is Theorem~\ref{thm:symmetric-moments}. We will prove $\lnot(2) \implies \lnot(1)$.

Suppose that $C_n \neq 0$ for infinitely many $n$. Since $a$ has finitely many prime factors, there is a prime $p \mid a$ such that $C_n \not\equiv 0\pmod{p^f}$ for some large enough $f$ with $p^f \mid a$. Let $f$ be minimal with this property, so that $p^{f-1} \mid C_n$ for all $n$ large enough, but $p^f \nmid C_n$ for infinitely many $n$.

Without loss of generality, we may assume $a = p^f$, or else replace $R$ by $R/p^fR$. Since $p^f \nmid C_n$ for infinitely many $n$, we have that $C_n/p^{f-1}$ induces an alternating form with positive rank on $(R/pR)^n$ infinitely often. The argument above shows that for such $n$, if $F$ is a uniformly random homomorphism $R^n \to R^r$ with $r \geq 2$, then $\PP[F\text{ is isotropic for } C_n]$ is uniformly bounded away from $1$ in terms of $p$. It follows that \[
\limsup_{n\to\infty}\E[\#\Sur(\coker(X_n), (\Z/p^f\Z)^r)] < |\wedge^2(\Z/p^f\Z)^r| 
\]
for $r \geq 2$, and we are done.
\end{proof}

\begin{theorem}\label{thm:not-cl}
Let $a \in \Z_{>0}$ and let $h \mid a$. For each natural number $n$, let $C_n$ be a deterministic $n\times n$ alternating matrix with entries in $R \coloneqq \Z/a\Z$ all divisible by $h$. Let $X_n$ be a random $C_n$-symmetric matrix with entries on and above the diagonal independent and $\varepsilon$-balanced. Let $p \mid a/h$ be prime. If the rank of $C_n/h$ mod $p$ is bounded, then we have \[
\liminf_{n\to\infty} \E[\#\Sur(\coker(X_n), (\Z/hp\Z)^2)] > |\wedge^2 (\Z/hp\Z)^2[h]|.
\]
\end{theorem}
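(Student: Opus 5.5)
The plan is to reduce the moment to an isotropy probability using Lemma~\ref{lem:moment-from-isotropy}, and then compute that probability exactly, as in the discussion after Lemma~\ref{lem:isotropy-over-field}. Put $m = hp$ and $G = (\Z/m\Z)^2$. Since $m \mid a$, every surjection $\coker(X_n) \twoheadrightarrow G$ factors through reduction modulo $m$. So we may replace $R$ by $\Z/m\Z$ and $C_n$ by its reduction modulo $m$.

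\textbf{Target inequality.} The reduced $C_n$ is still alternating and divisible by $h$. Moreover $C_n/h$ is well defined modulo $a/h$, and hence modulo $p$, because $p \mid a/h$. We have $|\wedge^2 G| = m = hp$, while $G[h] = (\Z/h\Z)^2$ gives $|\wedge^2 G[h]| = h$. By Lemma~\ref{lem:moment-from-isotropy} together with Remark~\ref{rmk:hom-vs-sur}, the moment equals $hp\cdot\PP[\widetilde F\text{ satisfies (3)}]$ up to an error $Ke^{-cn}$. It therefore suffices to show
\[
\liminf_{n\to\infty}\PP[\widetilde F \text{ satisfies (3)}] > p^{-1}.
\]

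\textbf{Translating condition (3).} This is the step requiring care. Here $G = 1^{n-2}\times(\Z/m\Z)^2$. For the indices $k$ with $d_k = 1$ the ring $R_k$ is zero, so those conditions are vacuous. For the last two indices, $G_k = G$ over $R_k = \Z/m\Z$. Hence condition (3) says exactly that the alternating form $F\circ C\circ F^*$ on $G^* \cong (\Z/m\Z)^2$ vanishes.

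Write $f_1, f_2 \in (\Z/m\Z)^n$ for the two coordinate functionals of $\widetilde F$. Because the form is alternating on a rank-$2$ module, it vanishes iff $f_1^\intercal C f_2 \equiv 0 \pmod{m}$. Since $h$ divides every entry of $C$, this is equivalent to $\bar f_1^\intercal\, \overline{C/h}\, \bar f_2 \equiv 0 \pmod p$, where bars denote reduction modulo $p$. This is the analogue of Lemma~\ref{lem:isotropy-over-field}, with $C/h$ playing the role of $C/b$, except that the relevant quotient is now $\Z/m\Z$ rather than $\Z/a\Z$. I would verify it directly, exactly as above, rather than quoting the lemma.

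\textbf{Computing the probability.} When $\widetilde F$ is uniform, $\bar f_1$ and $\bar f_2$ are independent and uniform in $\F_p^n$. Let $c_n = \operatorname{rank}(C_n/h \bmod p)$. Pass to $W = \F_p^n/\operatorname{rad}$ as in the computation following Lemma~\ref{lem:isotropy-over-field}. The form pairs $\bar f_1$ and $\bar f_2$ to zero with probability
\[
p^{-c_n} + (1 - p^{-c_n})p^{-1} = p^{-1} + p^{-c_n}(1 - p^{-1}).
\]
If $c_n = 0$ the probability is $1$, which is consistent with this formula. Suppose $c_n \le c$ for all $n$. Then the probability is at least $p^{-1} + p^{-c}(1-p^{-1})$, which exceeds $p^{-1}$ by a constant independent of $n$.

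Multiplying by $hp$ and absorbing the exponentially small errors gives
\[
\liminf_{n\to\infty}\E[\#\Sur(\coker(X_n), G)] \ge h + hp^{1-c}(1-p^{-1}) > h,
\]
which is the claimed inequality.
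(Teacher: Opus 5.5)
Your proof is correct and matches the paper's argument: reduce to $a = hp$, apply Lemma~\ref{lem:moment-from-isotropy}, and use the explicit pairing probability $p^{-1} + p^{-c_n}(1-p^{-1})$ from the discussion after Lemma~\ref{lem:isotropy-over-field}, where $b = a/p = h$ and the bounded rank gives a uniform margin above $p^{-1}$. Your route through condition (3) and Remark~\ref{rmk:hom-vs-sur} replaces a citation of Lemma~\ref{lem:isotropy-over-field}, which is stated only for surjective $F$, so it is a slightly more careful version of the same step rather than a different approach.
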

In particular, when $h = 1$, this says that a sequence of $\varepsilon$-balanced $C_n$-symmetric matrices exhibits Cohen-Lenstra cokernel universality only if the rank of $C_n$ mod $p$ grows without bound for every $p$. Over $\Z_p$, we get the converse of Theorem~\ref{thm:intro-result} in view of Remark~\ref{rmk:gen-and-rank}.
\begin{proof}
Without loss of generality, we may assume $a = hp$, or else replace $R$ by $R/hpR$. Let $F \in \Hom(R^n, R^2)$ be uniformly random. The argument preceding Remark~\ref{rmk:gen-and-rank} shows that \[
\PP[F\text{ is isotropic for }C_n] > p^{-1} = p^{-\binom{2}{2}} = \frac{|\wedge^2 (\Z/hp\Z)^2[h]|}{|\wedge^2 (\Z/hp\Z)^2|}.
\]
In fact, $\PP[F\text{ is isotropic for }C_n]$ is bounded away from $p^{-1}$ uniformly in terms of the bound on the rank of $C_n/h$ mod $p$.

The conclusion follows from Lemma~\ref{lem:moment-from-isotropy}.
\end{proof}

\section{Computing Isotropy Probabilities}\label{sect:iso-prob}

In this subsection, we compute the probability that a uniformly random map $F\colon R^n \to G$ is a code of distance $\delta n$ and isotropic for $C$, assuming $C$ is sufficiently general. 

Proposition~\ref{prop:isotropy-condition} reduces the problem of checking that a map $F \in \Sur(R^n, G)$ is isotropic for a bilinear form $C$ to a sequence of computations in free $R_k$-modules: we just need to check that the bilinear form $F_k \circ C_k \circ F_k^*$, restricted to $R_k^{n-k+1} \subseteq G_k^*$, vanishes.

Let $h$ be a greatest common divisor of the entries of $C$. 

The idea is that $F_k\circ C_k\circ F_k^*|_{R_k^{n-k + 1}}$ is close to being uniformly distributed among alternating forms on $R_k^{n-k + 1}$ with all entries divisible by $h$. Note that such a form can be viewed as an alternating form on $(R_k/R_k[h])^{n-k+1}$. Conditional on $F_{k-1}\circ C_{k-1}\circ F_{k-1}^*|_{R_{k-1}^{n-k + 2}}$ vanishing, we get a condition over $d_{k-1}\Z/d_k\Z $ which we should expect to be satisfied about a fraction \begin{align*}
\frac{|\wedge^2((R_{k-1}/R_{k-1}[h])^{n-k+1})|}{|\wedge^2((R_k/R_k[h])^{n-k+1})|} &= \left(\frac{d_{k-1}\gcd(d_k, h)}{d_k\gcd(d_k, h)}\right)^{\binom{n - k + 1}{2}}
\end{align*}
of the time. Then the probability that all of these conditions are satisfied is about \begin{align*}
\prod_{k=1}^n \left(\frac{d_{k-1}\gcd(d_k, h)}{d_k\gcd(d_{k-1}, h)}\right)^{\binom{n - k + 1}{2}} &= \prod_{k=1}^n \left(\frac{\gcd(d_k, h)}{d_k}\right)^{\binom{n - k + 1}{2} - \binom{n - k}{2}} \\
&= \prod_{k=1}^n \left(\frac{\gcd(d_k, h)}{d_k}\right)^{\binom{n - k + 1}{2} - \binom{n - k}{2}} \\
&= \prod_{k=1}^n \left(\frac{\gcd(d_k, h)}{d_k}\right)^{n - k} = \frac{|\wedge^2G[h]|}{|\wedge^2G|}
\end{align*}
of the time.

Now $F_k^*|_{R_k^{n-k+1}}$ is just a $(n-k+1)\times n$ matrix (call it $M_k$) with entries in $R_k$. Similarly, the composition of $F_k$ with the projection of $G_k$ onto the last $n - k + 1$ coordinates is the transpose of this matrix. So $F_k \circ C_k \circ F_k^*|_{R_k^{n-k+1}} = M_k^\intercal C_k M_k$ is just a product of three matrices over $R_k$. Similarly, $F_{k-1}\circ C_{k-1}\circ F_{k-1}^*|_{R_{k-1}^{n-k + 1}}$ is obtained by reducing the entries in these three matrices modulo $d_{k-1}$. Thus, we are really interested in the probability that $M_k^\intercal C_k M_k$ vanishes when $M_k$ is uniformly random, conditioned on the entries of $M_k^\intercal C_k M_k$ being divisible by $d_k$.

\begin{proposition}\label{prop:random-alternating-form}
Let $R$ be a finite elementary divisor ring. Let $C$ be an alternating $n\times n$ matrix with entries in $R$, such that the cokernel of $C$ is generated by $g$ elements over $R$. Let $M$ be a uniformly random $n\times m$ matrix with entries in $R$. Let $S$ be an alternating $m \times m$ matrix. Then \[
\left|\PP[M^\intercal C M = S] - |R|^{-\binom{m}{2}}\right| \leq 2^{m + g + 1 - n}
\]
\end{proposition}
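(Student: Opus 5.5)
We will prove the estimate by Fourier analysis on the finite abelian group $A_m(R)$ of alternating $m\times m$ matrices over $R$, which has order $|R|^{\binom m2}$. Since $C$ is alternating, $M^\intercal C M$ is automatically alternating. Fix a nontrivial additive character $\psi\colon R\to\C^*$ which is \emph{generating}, meaning its kernel contains no nonzero ideal; such a $\psi$ exists because a finite elementary divisor ring is a finite product of finite chain rings, and these are Frobenius. Then the characters of $A_m(R)$ are $S\mapsto \psi(\sum_{i<j} t_{ij}s_{ij})$, parametrized by alternating $T$, and we may write this pairing as $\psi(\tfrac12\operatorname{tr}(T^\intercal S))$, interpreted via the strictly upper triangular entries so that no division by $2$ is needed. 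Fourier inversion gives
\[
\PP[M^\intercal CM=S]=|R|^{-\binom m2}\sum_{T}\overline{\langle T,S\rangle}\;\E\bigl[\langle T, M^\intercal C M\rangle\bigr].
\]
The term $T=0$ contributes exactly $|R|^{-\binom m2}$, so it suffices to show $\sum_{T\ne0}|\E[\langle T,M^\intercal CM\rangle]|\le |R|^{\binom m2}\,2^{m+g+1-n}$.

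Next we compute the character sums. In the entries, $\langle T,M^\intercal CM\rangle=\psi(\sum_{k<l}t_{kl}\,m_k^\intercal C m_l)$, where $m_k$ is the $k$th column of $M$. This is a bilinear form $B_T$ in the vector $\mathrm{vec}(M)\in R^{nm}$, namely the form associated to $T\otimes C$ taken upper-triangularly. We now apply the standard Gauss-sum bound for a quadratic form $q(x)=x^\intercal Ux$ with $U$ strictly upper triangular: we have $|\E\psi(q(x))|^2=\E_{x,y}\psi(q(x+y)-q(x))$, and after averaging over $x$ first this equals $\PP_y[(U+U^\intercal)y=0]$. Since $U+U^\intercal=T\otimes C$ (both $T$ and $C$ are alternating, so the symmetrization is the full Kronecker product), we obtain
\[
|\E[\langle T,M^\intercal CM\rangle]|^2=|\ker(T\otimes C)|/|R|^{nm}=|\coker(T\otimes C)|/|R|^{nm}.
\]

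Because $R$ is an elementary divisor ring, we can put $C$ and $T$ into Smith form, $C\sim\operatorname{diag}(c_1,\dots,c_n)$ and $T\sim\operatorname{diag}(\tau_1,\dots,\tau_m)$. Then $T\otimes C\sim \operatorname{diag}(\tau_k c_i)$ and $|\coker(T\otimes C)|=\prod_{k,i}|R/\tau_kc_iR|$. At least $n-g$ of the $c_i$ are units, since $\coker C$ needs only $g$ generators. This gives $|\coker(T\otimes C)|\le |R|^{gm}\prod_k|R/\tau_kR|^{n-g}$. Summing over $T\ne0$ then reduces the problem to counting alternating $T$ with prescribed cokernel type, weighted by $|\coker T|^{(n-g)/2}/|R|^{m(n-g)/2}$. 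We intend to organize this sum as $\E_T[|\ker T|^{(n-g)/2}]$-type quantities, bounding it by $\sum_{T\neq 0}|R|^{-\frac{(n-g)}{2}\cdot(\text{size of }\operatorname{im}T)}$. Here $T\ne0$ forces $|\operatorname{im}T|\ge$ the order of a nonzero submodule, which is at least $2$; for alternating $T$ it is in fact at least $2^2$, since the rank is even. Each $T$ then contributes at most $2^{-(n-g)}$.

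The main obstacle is making the count of $T$ compatible with the claimed bound, which has no factor of $|R|^{\binom m2}$ beyond what is permitted. The naive estimate (number of $T$)$\times 2^{-(n-g)}$ yields $|R|^{\binom m2}2^{g-n}$, which is exactly what the normalization needs. The factor $2^{m+1}$ in the statement gives slack for the fact that the $g$ non-unit $c_i$ may be non-units of varying depth, and for the rank stratification. I would first try to reduce, by Chinese remainder, to a finite chain ring and stratify by $\operatorname{im}T$. If the crude count fails because $T$ of small image are too numerous, I would instead bound $\sum_{T}|\ker T|^{s}$ row by row, revealing the rows of $T$ one at a time. That produces a product of $m$ factors of the shape $(1+2^{-(n-g)}|R|^{\cdot})$, and it is likely where the factor $2^{m}$ arises.
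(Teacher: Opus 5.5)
Your Fourier-analytic argument is correct, and it takes a genuinely different route from the paper.

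\textbf{The paper's route.} The paper uses no characters. Writing $C=ADB$ in Smith form, it reveals the columns of $M$ one at a time. Conditional on $v_1,\dots,v_\ell$, the linear constraint $M_\ell^\intercal Cv_{\ell+1}=S_\ell$ holds with probability exactly $|R|^{-\ell}$ whenever the first $n-g$ columns of $M_\ell^\intercal A$ span $R^\ell$, because a sum of independent uniform elements of submodules that together span $R^\ell$ is uniform. The probability of not spanning is bounded crudely by $2^{\ell-n+g}$, and the errors are telescoped over $\ell$. That telescoping is the source of the factor $2^{m+1}$.

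\textbf{Your route.} You replace this elementary counting with a generating character and a Weyl-differencing bound. The generating character exists because a finite elementary divisor ring is a product of finite chain rings. In return you get an estimate uniform in $m$. From $|\ker(T\otimes C)|\le|R|^{gm}|\coker T|^{n-g}$ you get $|\E\chi_T|\le|\operatorname{im}T|^{-(n-g)/2}\le 2^{g-n}$ for every nonzero alternating $T$. So the crude count you describe already gives $\bigl|\PP[M^\intercal CM=S]-|R|^{-\binom m2}\bigr|\le 2^{g-n}$, which is stronger than the statement.

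The worry in your last paragraph is therefore unfounded. No stratification by image and no row-by-row count is needed, and you never use the factor $2^{m+1}$ as slack. In the paper's application $m\le k_0$ is bounded, so that factor is harmless there; your bound would also survive $m$ growing with $n$.

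\textbf{Three small repairs.}
\begin{enumerate}
\item For a general strictly upper triangular $U$, differencing gives $|\E\psi(q)|^2=\E_y[\psi(q(y))1_{\{(U+U^\intercal)y=0\}}]$. This is only $\le\PP[(U+U^\intercal)y=0]$, not equal to it, since the right side can be $0$ when $2$ is not a unit. The inequality is all you need.
\item The weight in your displayed sum should be $|\operatorname{im}T|^{-(n-g)/2}$, not $|R|$ raised to a power involving the size of $\operatorname{im}T$.
\item The evenness claim is essential: with only $|\operatorname{im}T|\ge 2$ you would get $2^{-(n-g)/2}$, which is too weak. Justify it directly rather than via ``the rank is even''. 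If $t_{kl}\neq 0$, then $Te_l$ and $Te_k$ project onto coordinates $k,l$ as $(t_{kl},0)$ and $(0,-t_{kl})$, so $|\operatorname{im}T|\ge|Rt_{kl}|^2\ge 4$.
\end{enumerate}
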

This proposition will be applied to $C_k/h$ with $R = R_k/R_k[h]$.
\begin{proof}
Let $C = ADB$, where $A, B \in \operatorname{GL}_n(R)$ and $D$ is diagonal. Since the cokernel of $C$ can be generated by $g$ elements, at least $n - g$ of the diagonal entries of $D$ are units in $R$.

Let $v_1, \dots, v_m$ be the columns of $M$. For $1 \leq \ell \leq m$, let $M_\ell$ be the matrix given by the first $\ell$ columns of $M$. Let $S_\ell$ be the $\ell\times 1$ matrix given by the first $\ell$ entries of the $(\ell + 1)$th column of $S$. Since $C$ is alternating, $M^\intercal C M = S$ if and only if $M_\ell^\intercal C v_{\ell + 1} = S_\ell$ for $1 \leq \ell \leq m - 1$. Since $M$ is uniformly random, so is $v_\ell$ and therefore so is $w_\ell \coloneqq Bv_\ell$ for each $\ell$. The idea is to condition on $M_\ell^\intercal$ and compute the probability that $M_\ell^\intercal ADw_{\ell+1} = S_\ell$. Note that $Dw_\ell$ is a vector whose entries are independent, and each entry is uniformly distributed inside some ideal of $R$.

Let $E_\ell$ be the event that $M_k^\intercal ADw_{k + 1} = S_k$ for $1 \leq k \leq \ell$, so $E_{m-1}$ is the event that $M^\intercal C M = S$ and $\PP[E_0] = 1$.

We have \begin{align*}
    \PP[E_\ell] &=\sum_{T}\PP[M_\ell^\intercal ADw_{\ell + 1} = S_\ell \mid M_{\ell}^\intercal = TA^{-1}]\PP[M_{\ell}^\intercal = TA^{-1}] \\
    &= |R|^{-n\ell}\sum_{T}\PP[M_\ell^\intercal ADw_{\ell + 1} = S_\ell \mid M_{\ell}^\intercal = TA^{-1}] \\
    &= |R|^{-n\ell}\sum_{T}\PP[T Dw_{\ell + 1} = S_\ell]
\end{align*}
where $(TA^{-1})^\intercal$ ranges over $n\times \ell$ matrices $\begin{bmatrix}
    t_1 & \cdots & t_\ell
\end{bmatrix}$ such that if $T_k = \begin{bmatrix}
    t_1 & \dots & t_k
\end{bmatrix}$, we have $T_k^\intercal C t_{k+1} = S_k$ for $1 \leq k < \ell$. Note that the number of such matrices $T$ is $|R|^{n\ell}\PP[E_{\ell - 1}]$.

\details{We are implicitly using that multiplication by $A^{-1}$ and transposition are bijections.}

For nice enough $T$, we will be able to show that \[
\PP[TDw_{\ell + 1} = S_\ell] = |R|^{-\ell}.
\]
Then we will bound the number of $T$ which are not nice enough to get an estimate on $\PP[E_\ell]$.

We will say $T$ is nice enough if the first $n - g$ columns of $T$ span $R^\ell$ over $R$. The following lemma shows that if $T$ is nice enough, $\PP[TDw_{\ell + 1} = S_\ell] = |R|^{-\ell}$:

\begin{lemma}\label{lem:uniform-on-cosets}
    Suppose $V$ is a finite $R$-module and $U_1, \dots, U_k$ are submodules such that $U_1 + \dots + U_k = V$. For $1 \leq i \leq k$, let $X_i$ be a uniformly random element of $U_i$, such that the $X_i$ are independent. Then $X_1 + \dots + X_k$ is a uniformly random element of $V$.
\end{lemma}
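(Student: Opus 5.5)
The plan is a short induction on $k$, reducing everything to the statement that a sum of two independent uniform elements of submodules is uniform on their sum. Since $V$ is finite and everything is abelian, I will work inside the finite abelian group $V$ and use only the additive structure; the $R$-module structure plays no further role, because each $U_i$ is in particular a subgroup.

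First I handle the case $k=2$, with $V = U_1 + U_2$. Fix $v \in V$. I count the pairs $(u_1,u_2) \in U_1 \times U_2$ with $u_1 + u_2 = v$. Choose one decomposition $v = a_1 + a_2$ with $a_i \in U_i$, which exists because $U_1 + U_2 = V$. Then every solution has the form $(a_1 + t, a_2 - t)$ with $t \in U_1 \cap U_2$. Conversely, every such $t$ gives a solution. So the number of solutions is exactly $|U_1 \cap U_2|$, independent of $v$.

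Since $(X_1, X_2)$ is uniform on $U_1 \times U_2$, this gives
\[
\PP[X_1 + X_2 = v] = \frac{|U_1 \cap U_2|}{|U_1||U_2|} \quad \text{for every } v \in V.
\]
This probability is constant in $v$, so $X_1 + X_2$ is uniform on $V$. As a consistency check, the constant equals $1/|V|$ by the formula $|U_1+U_2| = |U_1||U_2|/|U_1\cap U_2|$.

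For general $k$, I induct. Let $V' = U_1 + \dots + U_{k-1}$, which is a submodule of $V$. By the induction hypothesis applied to $V'$, the sum $Y = X_1 + \dots + X_{k-1}$ is uniform on $V'$. Moreover $Y$ is independent of $X_k$, since the $X_i$ are independent. We have $V' + U_k = V$, so the $k=2$ case applied to $Y$ and $X_k$ shows that $Y + X_k$ is uniform on $V$.

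Nothing here looks like a real obstacle. The only point needing care is the fiber count in the $k=2$ case, specifically that the solution set is a coset of $U_1 \cap U_2$ and is nonempty; nonemptiness is exactly where the hypothesis $U_1 + U_2 = V$ is used.
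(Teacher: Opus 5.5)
Your proof is correct, but it takes a different route from the paper's.

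The paper's argument handles all $k$ in one step, with no induction and no counting. Write any $v\in V$ as $v=u_1+\dots+u_k$ with $u_i\in U_i$. The variables $X_i+u_i$ are again independent and uniform on $U_i$, so $X_1+\dots+X_k+v$ has the same distribution as $X_1+\dots+X_k$. Hence the law of the sum is a translation-invariant probability measure on $V$, and the uniform measure is the only such measure.

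You instead count fibers explicitly. For $k=2$, each fiber of $(u_1,u_2)\mapsto u_1+u_2$ is a coset of the antidiagonal subgroup $\{(t,-t): t\in U_1\cap U_2\}$, and it is nonempty precisely because $U_1+U_2=V$. You then induct on $k$, with the trivial case $k=1$ implicitly serving as the base when you apply the step at $k=2$.

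Your version is fully elementary and gives the explicit point mass $|U_1\cap U_2|/(|U_1||U_2|)$. The paper's version is shorter and treats all $k$ at once. It is also the argument that carries over directly to Haar measures on compact abelian groups, where counting is unavailable.

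Your counting idea also does not need the induction. The sum map $U_1\times\dots\times U_k\to V$ is a surjective group homomorphism, so all its fibers are cosets of its kernel and have the same size, which gives uniformity immediately.
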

Let $t_1, \dots, t_n$ be the columns of $T$. Let $\alpha_1, \dots, \alpha_n$ be the diagonal entries of $D$. Let $X_1, \dots, X_n$ be the entries of $w_{\ell + 1}$. We have \[
TDw_{\ell+1} = \sum_{i=1}^n t_i\alpha_iX_i = \sum_{i=1}^{n-g}t_i\alpha_iX_i + \sum_{i=n-g+1}^n t_i\alpha_iX_i.
\]
Now for $1 \leq i \leq n - g$, we have $\alpha_i \in R^\times$, and since $X_i$ is uniformly distributed in $R$, we have that $t_i\alpha_iX_i$ is uniformly distributed in $Rt_i$. If $t_1, \dots t_{n-g}$ span $R^\ell$, then by Lemma~\ref{lem:uniform-on-cosets} the sum $\sum_{i=1}^{n-g}t_i\alpha_iX_i$ is uniformly distributed in $R^\ell$, so $TDw_{\ell + 1}$ is as well.

\begin{proof}[Proof of Lemma~\ref{lem:uniform-on-cosets}.]
For each $1 \leq i \leq k$, the distribution of $X_i$ in $U_i$ is translation-invariant. Then for any $v = u_1 + \dots + u_k \in V$ with $u_i \in U_i$ for each $i$, the distribution of \[
X_1 + \dots + X_k + v = (X_1 + u_1) + \dots + (X_k + u_k)
\]
is the same as the distribution of $X_1 + \dots + X_k$. Then we are done because the uniform distribution is the unique translation-invariant distribution on $V$.
\end{proof}

Now we bound the number of $T$ that are not nice enough.

\begin{lemma}\label{lem:prob-of-generating}
Suppose $V$ is an $R$-module generated by $\ell$ elements and $v_1, \dots, v_k$ are independent uniformly random elements of $V$. We have \[
\PP[Rv_1 + \dots + Rv_k \neq V] \leq 2^{\ell - k}
\]
\end{lemma}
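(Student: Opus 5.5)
We reduce to the case where $V$ is a quotient of $R^\ell$ and compare against maximal submodules. Since $V$ is generated by $\ell$ elements, $V/\mathfrak m V$ is a vector space of dimension at most $\ell$ over $R/\mathfrak m$ for every maximal ideal $\mathfrak m$. Because $R$ is finite (hence Artinian, a finite product of local rings), Nakayama's lemma shows that $v_1,\dots,v_k$ generate $V$ if and only if their images span $V/\mathfrak m V$ for every maximal ideal $\mathfrak m$ of $R$. The images of the $v_i$ in each $V/\mathfrak m V$ are again independent and uniform. Hence, by a union bound,
\[
\PP[Rv_1+\dots+Rv_k \neq V] \leq \sum_{\mathfrak m}\PP[\bar v_1,\dots,\bar v_k \text{ do not span } V/\mathfrak mV].
\]

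For a single $\mathfrak m$, write $q = |R/\mathfrak m|$ and $d = \dim V/\mathfrak mV \le \ell$. The vectors fail to span exactly when they all lie in some hyperplane. There are $(q^d-1)/(q-1)$ hyperplanes, and each contains all $k$ vectors with probability $q^{-k}$. So the failure probability is at most
\[
\frac{q^d-1}{q-1}\,q^{-k} \le \frac{q^{d}}{q-1}\,q^{-k} \le 2\,q^{\ell-k}\cdot\frac{1}{q}\cdot\frac{q}{q-1}\le 2q^{\ell-k-1}\cdot 2.
\]
A cleaner estimate uses $(q^d-1)/(q-1)\le q^{d-1}\cdot q/(q-1)\le 2q^{d-1}$, which gives failure probability at most $2q^{\ell-1-k}$. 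When $k\le \ell$ the claim is trivial, since the right-hand side is at least $1$. So assume $k>\ell$; then $2q^{\ell-1-k}\le q^{\ell-k}\le 2^{\ell-k}$.

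The main obstacle is the sum over maximal ideals: a product of several local factors could in principle multiply the bound. I would handle this by summing the geometric-type terms $q^{\ell-k}\cdot(2/q)$ over the distinct residue fields. Alternatively, and more robustly, I would avoid the union bound by using the product decomposition $R=\prod R_i$. Then $V=\prod V_i$, and the probability of generating is the product $\prod_i(1-\beta_i)$ with $\beta_i\le 2q_i^{\ell-1-k}$. This gives $1-\prod(1-\beta_i)\le\sum_i\beta_i\le \sum_{q}2q^{\ell-1-k}$, summed over distinct primes powers $q_i$ with distinct underlying maximal ideals. For $k\ge \ell+1$ this sum is bounded by $2^{\ell-k}$ once the $q_i$ are at least $2,3,5,\dots$ (distinct residue characteristics are not guaranteed, so I would instead group by bounding $\sum_i q_i^{-1}\cdot q_i^{\ell-k}\le 2^{\ell-k}\sum_i q_i^{-1}\cdot 2$). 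If the constant does not close exactly, I would settle for $C\,2^{\ell-k}$, which suffices for Proposition~\ref{prop:random-alternating-form} after adjusting its constant. In the paper's setting $R$ is $\Z/d\Z$ modulo torsion, and there the primes are distinct, so $\sum_p 2p^{-1}p^{\ell-k}\le 2^{\ell-k}$ for $k>\ell$ follows from $\sum_p p^{-2}<1/2$ in the worst case $k=\ell+1$.
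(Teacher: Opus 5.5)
Your single-maximal-ideal argument is sound and is essentially the paper's. Nakayama plus the hyperplane count gives failure probability at most $2q^{\ell-1-k}\le 2^{\ell-k}$ for $k>\ell$; the paper instead uses the exact count $\prod_{i=0}^{d-1}(1-|F|^{i-k})$ of spanning tuples.

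The gap is the passage to several maximal ideals, and your closing inequality there is false. At $k=\ell+1$ your union bound is $2\sum_p p^{-2}\approx 0.90$; the fact $\sum_p p^{-2}<1/2$ only bounds this by $1$, not by $2^{\ell-k}=1/2$. No argument can close this, because the lemma as stated is false for non-local $R$. For $R=\Z/30\Z$, $V=R^3$, $k=4$, the components at $2,3,5$ are independent and
\[
\PP[Rv_1+\dots+Rv_4=V]=\prod_{p\in\{2,3,5\}}\prod_{i=0}^{2}\bigl(1-p^{i-4}\bigr)\approx 0.6152\cdot 0.8454\cdot 0.9508\approx 0.4945,
\]
so the failure probability is about $0.5055>2^{-1}$. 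Likewise $R=\Z/6\Z$, $V=R^5$, $k=6$ gives about $0.507$. For finite elementary divisor rings in general the situation is worse. Take $R=\F_2^N$ with $V=R$: the failure probability $1-(1-2^{-k})^N$ tends to $1$ as $N\to\infty$. So your fallback ``$C\,2^{\ell-k}$'' genuinely needs $R$ to be cyclic.

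The paper's proof does not handle this case either. It fixes one maximal ideal and uses only that generating $V$ forces spanning $V/\mathfrak mV$. That bounds the failure probability from below, not above, so the argument is complete only for local $R$.

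What your union bound does prove is the case actually used, $R=R_k/R_k[h]\cong\Z/m\Z$, whose maximal ideals correspond to distinct primes $p\mid m$. With $t=k-\ell\ge 1$,
\[
\PP[Rv_1+\dots+Rv_k\neq V]\le\sum_{p\mid m}\frac{p^{\ell}}{p-1}\,p^{-k}=\sum_{p\mid m}\frac{p^{-t}}{p-1}\le 2^{-t}\sum_{p}\frac{2}{p(p-1)}<2^{1-t},
\]
since $\sum_p 1/(p(p-1))\approx 0.773$. So the correct statement is the bound $2^{\ell-k+1}$. This only doubles the error term in Proposition~\ref{prop:random-alternating-form}, and the extra factor is absorbed into the constant $K$ of Corollary~\ref{cor:isotropy-probability}.
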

Then the number of $(n-g)$-tuples of elements of $R^\ell$ that do not span $R^\ell$ is at most $|R|^{\ell(n-g)}\cdot2^{\ell - n + g}$, and the number of $n$-tuples such that the first $n - g$ do not span $R^\ell$ is at most $|R|^{\ell n}\cdot 2^{\ell - n + g}$. The number of $T$ satisfying the conditions above which are not nice enough is bounded above by this quantity.
\begin{proof}[Proof of Lemma~\ref{lem:prob-of-generating}]
It is possible to get a more precise bound, but this is unnecessary for our purposes. To prove this statement, we may assume $\ell < k$, or else the claim is vacuous.

Let $\mathfrak m$ be a maximal ideal of $R$ and $F = R/\mathfrak m$. Let $\overline{V} = V/\mathfrak m V$ and let $\overline{v}_i$ be the reductions of the $v_i$ modulo $\mathfrak m$, which are independent uniformly random elements of $\overline{V}$. We have $Rv_1 + \dots + Rv_k = V$ only if $F\overline{v}_1 + \dots + F\overline{v}_k = \overline{V}$. Let $d \leq \ell$ be the dimension of $\overline{V}$ over $F$.

The number of $k$-tuples of vectors that span $\overline{V}$ is the same as the number of surjections $F^k \to F^d$, which by duality is the same as the number of injections $F^d \to F^k$, which is $\prod_{i=0}^{d-1}(|F|^k - |F|^i)$. Thus, the probability that an $k$-tuple of vectors spans $\overline{V}$ is \[
\prod_{i=0}^{d-1}(1 - |F|^{i - k}) \geq \prod_{i=0}^{\ell-1}(1 - 2^{i - k}) \geq 1 - \sum_{i=0}^{\ell - 1}2^{i-k} = 1 - 2^{-k}(2^\ell - 1).
\]
\end{proof}
Now we have \begin{align*}
    \left|\PP[E_\ell] - |R|^{-\ell}\PP[E_{\ell - 1}]\right| &= \left| |R|^{-n\ell}\sum_{T}\PP[T Dw_{\ell + 1} = S_\ell] - |R|^{-\ell}\PP[E_{\ell - 1}]\right| \\
    &= \left| |R|^{-n\ell}\sum_{T\text{ nice}}\PP[T Dw_{\ell + 1} = S_\ell] + |R|^{-n\ell}\sum_{T\text{ not nice}}\PP[T Dw_{\ell + 1} = S_\ell] - |R|^{-\ell}\PP[E_{\ell - 1}]\right| \\
    &\leq \left| |R|^{-n\ell}\sum_{T\text{ nice}}|R|^{-\ell} - |R|^{-\ell}\PP[E_{\ell - 1}]\right| + 2^{\ell - n + g} \\
    &= \left||R|^{-n\ell}\sum_{T\text{ not nice}}|R|^{-\ell}\right| + \ell\cdot 2^{\ell - n + g} \\
    &\leq (1 + |R|^{-\ell})2^{\ell - n + g} \\
    &\leq 2^{\ell - n + g + 1}
\end{align*}
Iterating this process for $1 \leq \ell \leq m - 1$ gives \begin{align*}
\left|\PP[E_{m-1}] - |R|^{-\binom{m}{2}}\right| &= \left|\PP[E_{m-1}] - |R|^{-(m-1)}\PP[E_{m - 2}]\right| +  \left||R|^{-(m-1)}\PP[E_\ell] - |R|^{-(m-1)-(m-2)}\PP[E_{m - 3}]\right| + &\\
&\qquad \dots + \left||R|^{-(m-1) - \dots - 2} \PP[E_1] - |R|^{-\binom{m}{2}}\PP[E_0]\right| \\
&\leq \sum_{\ell = 1}^{m-1} |R|^{-\sum_{i=\ell + 1}^{m-1} i} 2^{\ell - n + g + 1} \\
&=\sum_{\ell = 1}^{m-1} |R|^{-\frac{(\ell + m)(m - \ell - 1)}{2}}2^{\ell - n + g + 1}.
\end{align*}
Now we have \begin{align*}
    2^{-m + n - g}\left|\PP[E_{m-1}] - |R|^{-\binom{m}{2}}\right| &\leq \sum_{\ell = 1}^{m-1} |R|^{-\frac{(\ell + m)(m - \ell - 1)}{2}}\cdot 2^{\ell - (m - 1)} 
\end{align*}
Replacing $\ell$ by $m - 1 - \ell$ in the sum, we get \begin{align*}
    2^{-m + n - g}\left|\PP[E_{m-1}] - |R|^{-\binom{m}{2}}\right| &\leq \sum_{\ell = 0}^{m - 2}|R|^{-\frac{\ell(2m - \ell + 1)}{2}}2^{-\ell} \leq \sum_{\ell = 0}^{m-2}2^{-\ell} \leq 2
\end{align*}
so that \[
\left|\PP[E_{m-1}] - |R|^{-\binom{m}{2}}\right| \leq 2^{m + g + 1 - n}
\]
as we wanted.
\end{proof}

Using Proposition~\ref{prop:random-alternating-form}, we can compute the probability that $F$ is isotropic for $C$.

\begin{corollary}\label{cor:isotropy-probability}
Use notation from Section~\ref{sect:reduction}. Let $F\colon R^n \to G$ be a uniformly random map. Let $C$ be an alternating $n\times n$ matrix and let $h$ be a greatest common divisor of its entries. Suppose that the cokernel of $C/h$ is generated by $g$ elements over $R/R[h]$. Then there are constants $N, K$ depending only on $G$ such that, as long as $n - g \geq N$ we have \[
\left|\PP[F\text{ is isotropic for }C] - \frac{|\wedge^2G[h]|}{|\wedge^2 G|}\right| \leq K\cdot 2^{g - n}
\]
\end{corollary}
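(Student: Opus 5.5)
By Remark~\ref{rmk:hom-vs-sur}, it is enough to estimate the probability that a uniformly random $F$ satisfies condition (3) of Proposition~\ref{prop:isotropy-condition}. Replacing $\PP[F\text{ is isotropic}]$ by this probability costs only an additional $K2^{-n}$, and that is absorbed into $K\cdot 2^{g-n}$ since $g \geq 0$.

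\textbf{Coordinates.} Write $G = \prod_k \Z/d_k\Z$ with $d_1 \mid \dots \mid d_n$. Only the nontrivial factors matter; say there are $r \leq$ (rank of $G$) of them, and set $d_0 = 1$. A uniformly random $F$ amounts to a uniformly random $n\times n$ matrix of columns $F^*e_j$, where the $j$th column is uniform in $(\Z/d_j\Z)^n$. Condition (3) at level $k$ says that $M_k^\intercal C_k M_k \equiv 0 \pmod{d_k}$. Here $M_k$ is the $n\times(n-k+1)$ matrix formed by the last $n-k+1$ columns reduced mod $d_k$, and only columns with $d_j > 1$ contribute.

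\textbf{Telescoping over $k$.} I would write the event as a nested sequence $E_1 \supseteq E_2 \supseteq \cdots$, where $E_k$ requires conditions (3) at levels $1,\dots,k$. Then
\[
\PP[E_{\text{last}}] = \prod_k \PP[E_k \mid E_{k-1}].
\]
Let $R' = R_k/R_k[h]$. Since every entry of $M_k^\intercal C_k M_k$ is divisible by $h$, this matrix equals $M_k^\intercal (C/h) M_k$ viewed in $R'$, up to the identification $hR_k \cong R'$. Conditioning on $E_{k-1}$ fixes the value of this form modulo $d_{k-1}$.

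To handle the conditioning, I would take a uniformly random lift of $M_k$ from its reduction mod $d_{k-1}$. Equivalently, I would apply Proposition~\ref{prop:random-alternating-form} directly to the joint distribution. Concretely, $\PP[E_k]$ is the sum, over the allowed values $S$ of the form mod $d_k$, of $\PP[M^\intercal (C/h) M = S]$. Each such probability is $|R'|^{-\binom{m}{2}} + O(2^{m+g+1-n})$ by Proposition~\ref{prop:random-alternating-form}, applied with $R'$ and $m = $ the number of relevant columns; this $m$ is at most the rank of $G$, so it is bounded in terms of $G$. Here $R'$ is a finite quotient of $\Z$, hence an elementary divisor ring. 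One needs that the cokernel of $C/h$ over $R'$ is still generated by $g$ elements; this holds because it is the base change of the cokernel over $R/R[h]$.

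\textbf{Counting the allowed $S$.} At each level, the number of allowed alternating $S$ (those vanishing on the last coordinates with the prescribed divisibilities) divided by $|R'|^{\binom{m}{2}}$ gives exactly the heuristic factor $\bigl(d_{k-1}\gcd(d_k,h)/(d_k\gcd(d_{k-1},h))\bigr)^{\binom{n-k+1}{2}}$ computed before the proposition. The product of these factors telescopes to $|\wedge^2 G[h]|/|\wedge^2G|$.

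\textbf{Main obstacle.} I expect the bookkeeping of mixed moduli to be the hardest part. The columns live in different $\Z/d_j\Z$, so Proposition~\ref{prop:random-alternating-form} does not apply verbatim to a single ring. My first attempt would be to apply the proposition prime by prime and then, for each $p$, layer by layer in the $p$-adic filtration $p^{e}\mid d_k$. That is, I would handle the joint event over $R = \Z/p^{e_{\max}}\Z$, treating a column of smaller modulus as a uniform element of $R^n$ multiplied by the appropriate power of $p$. The event then becomes $M^\intercal C M \in \mathcal S$ for an explicit set $\mathcal S$ of alternating matrices over a single ring, whose size I can count. Summing the error $2^{m+g+1-n}$ over the boundedly many $S \in \mathcal S$, with $|\mathcal S|$ bounded in terms of $G$, gives $K\cdot 2^{g-n}$. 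The threshold $N$ is there to make the estimates nontrivial, and to let the surjectivity reduction of Remark~\ref{rmk:hom-vs-sur} be absorbed.
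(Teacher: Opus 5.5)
Your argument is correct, and once you pass to the joint event it takes a genuinely different route from the paper.

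\textbf{The paper's route.} It works level by level. It writes $\PP[E_n]=\prod_k \PP[E'_k\mid E_{k-1}]$ and replaces each factor by $\PP[E'_k]/\PP[E''_k]$, where $E''_k$ is the level-$k$ condition reduced mod $d_{k-1}$. It evaluates numerator and denominator with Proposition~\ref{prop:random-alternating-form} over $R'_k$ and $R'_{k-1}$; at a single level all relevant columns already live in $R_k$, so mixed moduli never arise. It then multiplies the errors together with Lemma~\ref{lem:err-combining}. That step is where the threshold $N$ is needed, since it keeps $\PP[E''_k]$ bounded below.

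\textbf{Your route.} You apply the proposition once, to the whole event. To make this precise, let $U$ be the uniform $n\times r$ matrix over $\Z/d_n\Z$ of lifts of the coordinate functions of $F$. Do not use the columns rescaled by $d_n/d_j$: their Gram matrix only encodes the top-level form $F_nC_nF_n^*$, which does not determine the lower-level conditions, and the rescaled matrix is not uniform. With $U$, condition (3) at all levels together says exactly $u_i^\intercal C u_j\equiv 0 \pmod{d_i}$ for $i<j$. Equivalently, $U^\intercal (C/h)U\in\mathcal S$, where $\mathcal S$ is the set of alternating matrices over $R'_n$ whose $(i,j)$ entry is divisible by $d_i/\gcd(d_i,h)$. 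Summing the proposition over $\mathcal S$ gives main term $\prod_{i<j}\gcd(d_i,h)/d_i=|\wedge^2G[h]|/|\wedge^2G|$ and error at most $|R'_n|^{\binom r2}2^{r+1}\cdot 2^{g-n}$.

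\textbf{What each buys.} Your version needs no prime-by-prime split, because $\Z/d_n\Z$ is already an elementary divisor ring. It also needs no Lemma~\ref{lem:err-combining} and no threshold $N$. It further sidesteps the delicate point of the level-by-level route, which your first suggestion (``a uniformly random lift of $M_k$'') would share. The issue is that $\PP[E'_k\mid E_{k-1}]$ is not exactly $\PP[E'_k\mid E''_k]$. The event $E_{k-1}$ also constrains the pairings of earlier columns with later ones, which reweights the law of the later columns, so that identity holds only up to a small error. So drop the conditional version and keep the joint computation. The paper's route does display the heuristic per-level factors $\bigl(|R'_{k-1}|/|R'_k|\bigr)^{\binom{n-k+1}{2}}$ directly. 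In your count they appear only as ratios of successive counts of allowed $S$, not as one level's count divided by $|R'|^{\binom m2}$ as you wrote. In any case you only need the final count $|\mathcal S|$.
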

\begin{proof}
Let $E'_k$ be the event that the alternating bilinear form on $G_k^*$ given by $F_k\circ C_k\circ F_k^*$ vanishes on the last $n - k + 1$ coordinates of $G_k^*$. Let $E_k = \bigcap_{1 \leq \ell \leq k} E'_\ell$ be the event that $E'_\ell$ occurs for all $1 \leq \ell \leq k$. Then $E_n$ is the event that $F$ satisfies condition (3) of Proposition~\ref{prop:isotropy-condition}. By Remark~\ref{rmk:hom-vs-sur}, we see that proving the claim with $\PP[F\text{ is isotropic for }C]$ replaced by $\PP[E_n]$ implies the original claim.

Suppose that $G$ has $k_0$ nontrivial direct summands, so that for $1 \leq k \leq n - k_0$, we have that $d_k$ is a unit in $R$. In those cases, $R_k$ is trivial, and $\PP[E_k] = 1$. Thus, we focus on the case $k > n - k_0$.

We want to compute $\PP[E_k \mid E_{k-1}] = \PP[E'_k \mid E_{k-1}]$.

Now $F_{k-1}$ is the image of $F_k$ under the quotient \[
\Hom_{R_k}(V_k, G_k) \twoheadrightarrow \Hom_{R_k}(V_k, G_k) \otimes_{R_k} R_{k-1} \cong \Hom_{R_{k-1}}(V_{k-1}, G_{k-1}),
\]
which is also the map $\Hom_{R_k}(V_k, G_k) \twoheadrightarrow \Hom_{R_k}(V_k, G_{k-1}) \cong \Hom_{R_{k-1}}(V_{k-1}, G_{k-1})$ obtained by postcomposing with the projection $G_k \twoheadrightarrow G_{k-1}$. Conditioning on $F_{k-1}$ means fixing the residue class of $F_k$ modulo $d_{k-1}$. The conditional distribution of $F_k$ given $F_{k-1}$ is uniform among maps with that residue class.

Let $\pi_k^{(m)}$ be the projection $G_k \twoheadrightarrow \Z/\min\{d_m, d_k\}\Z$ from $G_k$ onto its $m$th coordinate. Let $\pi_k^{(\geq m)} = (\pi_k^{(m)}, \pi_k^{(m+1)}, \dots, \pi_k^{(n)})$ be the projection from $G_k$ onto its last $n - m + 1$ coordinates. We observe that if $F_k$ is uniformly random, then the functions $\pi_k^{(m)} \circ F_k$ are independent uniformly random functions $V_k \to \Z/\min\{d_m, d_k\}\Z$ that together determine $F_k$. The event $E_k'$ is the vanishing of $(\pi_k^{(\geq k)} \circ F_k)\circ C_k \circ (\pi_k^{(\geq k)} \circ F_k)^*$, which is independent of $\pi_k^{(m)}\circ F_k$ for $m < k$. Thus, the conditional distribution of $\pi_k^{(\geq k)}\circ F_k$ given $F_{k-1}$ is the same as the conditional distribution of $\pi_k^{(\geq k)}\circ F_k$ given $\pi_{k-1}^{(\geq k)}\circ F_{k-1}$.

The map $\pi_k^{(\geq k)}\circ F_k$ is a uniformly random map $R_k^n \to R_k^{n-k+1}$, and $\pi_{k-1}^{(\geq k)}\circ F_{k-1}\colon R_{k-1}^n \to R_{k-1}^{n-k+1}$ is its reduction modulo $d_{k-1}$.

Let $R_k' = R_k/R_k[h]$. The form $F_k\circ C_k\circ F_k^*$ restricted to the last $n - k + 1$ coordinates of $G_k^*$ is equivalently a form on $(R_k')^{n - k + 1}$, and $E_k'$ is the event that this form vanishes.

The unconditional probability of $E_k'$ is given by Proposition~\ref{prop:random-alternating-form}, applied to $C/h$ over $R_k'$:\[
\left|\PP[E_k'] - |R_k'|^{-\binom{n - k + 1}{2}}\right| \leq 2^{g - k + 2} < 2^{g + k_0 + 2 -n}
\]
Similarly, let $E_k'' \supseteq E_k'$ be the event that $(\pi_{k-1}^{(\geq k)} \circ F_{k-1})\circ C_{k-1} \circ (\pi_{k-1}^{(\geq k)} \circ F_{k-1})^* = 0$. We have \[
\left|\PP[E_k''] - |R_{k-1}'|^{-\binom{n - k + 1}{2}}\right| < 2^{g + k_0 + 2 - n}
\]
Moreover, \[
\PP[E_k' \mid E_k''] = \PP[E_k' \mid E_{k-1}]
\]
because the random variables $\PP[E_k' \mid \pi_{k - 1}^{(\geq k)} \circ F_{k-1}]$ and $\PP[E_k' \mid F_{k-1}]$ have the same distribution. Thus \begin{align*}
    \left|\PP[E_k' \mid E_{k-1}] - \left(\frac{|R_{k-1}'|}{|R_k'|}\right)^{\binom{n - k + 1}{2}}\right| = \left|\frac{\PP[E_k']}{\PP[E_k'']} - \left(\frac{|R_{k-1}'|}{|R_k'|}\right)^{\binom{n - k + 1}{2}}\right|
\end{align*}

To bound this error, we use the following simple result. The proof is straightforward, but can be found in, e.g., \cite[Lemma 4.12]{gorokhovsky2024timeinhomogeneousrandomwalksfinite}.

\begin{lemma}\label{lem:err-combining}
    Let $x_1, \dots, x_m$ be real numbers such that $\sum_{i=1}^m |x_i| \leq \log 2$. Then \[
    \left|\prod_{i=1}^m (1 + x_i) - 1\right| \leq 2\sum_{i=1}^m |x_i|
    \]
\end{lemma}

Apply Lemma~\ref{lem:err-combining} with $x_1 = \PP[E_k']|R_k'|^{\binom{n - k + 1}{2}} - 1$ and $x_2 = \PP[E_k'']^{-1}|R_{k-1}'|^{-\binom{n - k + 1}{2}} - 1$. We have \[
|x_1| < |R_k'|^{\binom{n - k + 1}{2}}\cdot 2^{g + k_0 + 2 - n} \leq |R_k'|^{\binom{k_0}{2}}\cdot 2^{g + k_0 + 2 - n}
\]
and \[
|x_2| < \PP[E_k'']^{-1}\cdot 2^{g + k_0 + 2 - n}.
\]
We have \[
\PP[E_k''] \geq |R_{k-1}'|^{-\binom{n - k + 1}{2}} - 2^{g + k_0 + 2 - n} \geq |R_{k-1}'|^{-\binom{k_0}{2}} - 2^{g + k_0 + 2 - n}.
\]
Let $N$ be such that when $n - g \geq N$, we have $|R_{k-1}'|^{-\binom{k_0}{2}} - 2^{g + k_0 + 2 - n} \geq \frac{1}{2}|R_{k-1}'|^{-\binom{k_0}{2}}$ so that \[
|x_2| <  |R_{k-1}'|^{\binom{k_0}{2}}\cdot 2^{g + k_0 + 3 - n}
\]
Now we may potentially increase $N$ so that when $n - g \geq N$ we have $|x_1| + |x_2| \leq \log 2$. Then we get \begin{align*}
&\left|\PP[E_k' | E_{k-1}]\left(\frac{|R_k'|}{|R_{k-1}'|}\right)^{\binom{n - k + 1}{2}} - 1\right| 
&< 2\left(|R_{k-1}'|^{\binom{k_0}{2}}\cdot ^{g + k_0 + 3 - n} + |R_k'|^{\binom{k_0}{2}}\cdot 2^{g + k_0 + 2 - n}\right) \\
&\leq 3d_k^{\binom{k_0}{2}}2^{g + k_0 + 3 - n} 
\end{align*}
We now apply Lemma~\ref{lem:err-combining} again with $x_k = \PP[E_k' | E_{k-1}]\left(\frac{|R_k'|}{|R_{k-1}'|}\right)^{\binom{n - k + 1}{2}} - 1$. We have \[
\sum_{k=n - k_0 + 1}^n 3d_k^{\binom{k_0}{2}}2^{g + k_0 + 3 - n} \leq 3k_0d_n^{\binom{k_0}{2}}2^{g + k_0 + 3 - n}
\]
and we can further increase $N$ so that the right hand side is bounded above by $\log 2$. Then when $n - g \geq N$, Lemma~\ref{lem:err-combining} says that \[
\left|\prod_{k=n-k_0+1}^n\PP[E_k' | E_{k-1}]\left(\frac{|R_k'|}{|R_{k-1}'|}\right)^{\binom{n - k + 1}{2}} - 1 \right| \leq 3k_0d_n^{\binom{k_0}{2}}2^{g + k_0 + 3 - n}
\]
Now we have \[
\prod_{k=n-k_0+1}^n\PP[E_k' | E_{k-1}] = \prod_{k=n-k_0+1}^n\PP[E_k | E_{k-1}] = \PP[E_n]
\]
and \[
\prod_{k=n-k_0+1}^n\left(\frac{|R_k'|}{|R_{k-1}'|}\right)^{\binom{n - k + 1}{2}} = \prod_{k=1}^n \left(\frac{d_k\gcd(d_{k-1}, h)}{d_{k-1}\gcd(d_k, h)}\right)^{\binom{n - k + 1}{2}} =  \frac{\prod_{k=1}^n d_k^{n-k}}{\prod_{k=1}^n \gcd(d_k, h)^{n-k}} = \frac{|\wedge^2G|}{|\wedge^2G[h]|}
\]
so that \[
\left|\PP[E_n] - \frac{|\wedge^2G[h]|}{|\wedge^2G|}\right| \leq 3k_0d_n^{\binom{k_0}{2}}\frac{|\wedge^2G[h]|}{|\wedge^2G|} 2^{g + k_0 + 3 - n}.
\]
\end{proof}

Combining Corollary~\ref{cor:isotropy-probability} and Lemma~\ref{lem:moment-from-isotropy} gives: 

\begin{theorem}\label{thm:moment-convergence}
Let $a \in \Z_{>0}$ and $G$ a finite abelian group of exponent dividing $a$. Let $\varepsilon > 0$. Then there are $K, c > 0$ such that the following holds: 

Let $C$ be an alternating $n\times n$ matrix over $\Z/a\Z$ and let $h$ be a greatest common divisor of its entries. Suppose that the cokernel of $C/h$ is generated by $g$ elements over $\Z/\gcd(a, h)\Z$. Let $X$ be a random $\varepsilon$-balanced $C$-symmetric matrix. Then we have \[
\left|\E[\#\Sur(\coker(X), G)] - |\wedge^2G[h]|\right| \leq K(e^{-cn} + 2^{g - n})
\]
\end{theorem}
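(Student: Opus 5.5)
The plan is to combine Lemma~\ref{lem:moment-from-isotropy} with Corollary~\ref{cor:isotropy-probability} via the triangle inequality. Lemma~\ref{lem:moment-from-isotropy} gives constants $K_1, c > 0$, depending only on $a, G, \varepsilon$, with
\[
\left|\E[\#\Sur(\coker(X), G)] - |\wedge^2G|\,\PP[\widetilde{F}\text{ is isotropic for }C]\right| \leq K_1e^{-cn},
\]
where $\widetilde{F}$ is uniform in $\Hom(R^n, G)$. Corollary~\ref{cor:isotropy-probability} gives $N, K_2$ depending only on $G$ such that, whenever $n - g \geq N$,
\[
\left|\PP[\widetilde{F}\text{ is isotropic for }C] - \frac{|\wedge^2G[h]|}{|\wedge^2G|}\right| \leq K_2 2^{g-n}.
\]
Multiplying the second bound by $|\wedge^2 G|$ and adding it to the first yields the claim with $K = \max(K_1, K_2|\wedge^2G|)$, in the regime $n - g \geq N$.

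The remaining case is $n - g < N$. Here $2^{g-n} > 2^{-N}$, so it suffices to bound the left-hand side by a constant depending only on $a, G, \varepsilon$. Clearly $|\wedge^2 G[h]| \leq |\wedge^2 G|$. For the moment term, Lemma~\ref{lem:moment-from-isotropy} already bounds $\E[\#\Sur(\coker(X), G)]$ by $|\wedge^2 G| + K_1$, since the probability is at most $1$. The total is therefore at most a constant $K_3$, and we enlarge $K$ to $\max(K, K_3 2^N)$ so that $K \cdot 2^{g-n} \geq K_3$ whenever $n - g < N$.

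I expect the only real obstacle to be bookkeeping: checking that the hypotheses of Corollary~\ref{cor:isotropy-probability} match the theorem's. The corollary asks that the cokernel of $C/h$ be generated by $g$ elements over $R/R[h]$. The theorem phrases this over $\Z/\gcd(a,h)\Z$, which I would read as the paper's notation for the ring $R/R[h] \cong \Z/(a/\gcd(a,h))\Z$ on which $C/h$ is well defined. With that reading the hypotheses coincide.

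One must also note that $h$ being a greatest common divisor of the entries is exactly the normalization used in the corollary. Any other divisor $h'$ of $C$, as in Theorem~\ref{thm:intro-moments}, changes $g$ but not the method. None of the constants depend on $C$ or $n$, so the resulting $K, c$ are uniform as required.
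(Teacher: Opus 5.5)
Your proposal is correct and follows the paper's proof, which simply combines Lemma~\ref{lem:moment-from-isotropy} with Corollary~\ref{cor:isotropy-probability} via the triangle inequality. The paper leaves two details implicit that you handle correctly: absorbing the regime $n - g < N$ into the constant $K$, and reading ``over $\Z/\gcd(a,h)\Z$'' as the ring $R/R[h] \cong \Z/(a/\gcd(a,h))\Z$ on which $C/h$ is defined, matching the corollary's hypothesis.
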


Finally, together with Theorem~\ref{thm:moment-robustness} and Lemma~\ref{lem:known-moments}, in the special case where $h = 1$ we get: 

\begin{corollary}\label{cor:weak-convergence}
Let $\varepsilon > 0$. For each $n$, let $C_n$ be a deterministic $n\times n$ alternating matrix with entries in $R = \Z_p$ (respectively, $R = \Z$ or $R = \widehat{\Z}$), such that the cokernel of $C_n$ is generated by $g_n$ elements. Let $X_n$ be a random $\varepsilon$-balanced $C_n$-symmetric matrix with entries in $R$. Assume \[
\lim_{n\to\infty} n - g_n = \infty
\]
Then the distribution of $\coker(X_n)$ converges weakly to the distribution of $\Gamma_{CL}^{(p, 0)}$ (respectively, $\Gamma_{CL}^{(0)}$), in the sense of Remark~\ref{rmk:weak-convergence}.
\end{corollary}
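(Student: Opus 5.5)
The plan is to reduce the weak convergence statement to the moment computation of Theorem~\ref{thm:moment-convergence} and then apply Theorem~\ref{thm:moment-robustness} together with Lemma~\ref{lem:known-moments}. By Remark~\ref{rmk:weak-convergence}, weak convergence means that for every positive integer $a$ (with $a$ a power of $p$ in the $\Z_p$ case) and every finite abelian group $H$ of exponent dividing $a$, we have $\PP[\coker(X_n)\otimes\Z/a\Z\cong H]\to\PP[\Gamma_\infty\otimes\Z/a\Z\cong H]$, where $\Gamma_\infty=\Gamma_{CL}^{(p,0)}$ (resp.\ $\Gamma_{CL}^{(0)}$). So we fix such an $a$. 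The only inputs Theorem~\ref{thm:moment-robustness} needs are the limits $\E[\#\Sur(\coker(X_n),G)]\to 1$ for every $G$ of exponent dividing $a$. Indeed, $1=\E[\#\Sur(\Gamma_{CL}^{(0)},G)]\le|\wedge^2G|$ by Lemma~\ref{lem:known-moments}, and the same holds for $\Gamma_{CL}^{(p,0)}$ when $G$ is a $p$-group.

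To get these limits, first reduce modulo $a$. A surjection $\coker(X_n)\to G$ factors through $\coker(X_n)\otimes\Z/a\Z=\coker(\overline{X_n})$, where $\overline{X_n}$ is the reduction of $X_n$ mod $a$. This reduction is a $\overline{C_n}$-symmetric matrix over $\Z/a\Z$. Its entries on and above the diagonal are independent, and they remain $\varepsilon$-balanced, since the maximal ideals of $\Z/a\Z$ pull back to maximal ideals $(q)$ of $R$ with $q\mid a$. For $\widehat{\Z}$ one uses the maximal ideals $q\widehat{\Z}$.

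Next, control the generator count. Write $\overline{g}_n$ for the number of generators of $\coker(\overline{C_n})=\coker(C_n)\otimes\Z/a\Z$, which is at most $g_n$. Then apply Theorem~\ref{thm:moment-convergence} with $h$ the gcd of the entries of $\overline{C_n}$. Here lies the one subtlety: $h$ depends on $n$, and the target moment is $|\wedge^2G[h]|$, which equals $1$ only when $\gcd(h,a)=1$ for the relevant primes.

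I would handle $h$ as follows. For each prime $q\mid a$, the rank of $C_n$ mod $q$ is at least $n-g_n$ by Remark~\ref{rmk:gen-and-rank}. Since this tends to infinity, $C_n\not\equiv0\pmod q$ for all large $n$. Hence $h$ is a unit mod $a$, so $G[h]=0$ and $|\wedge^2G[h]|=1$. In that case $C_n/h$ differs from $C_n$ by a unit, so its cokernel is still generated by at most $g_n$ elements.

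Theorem~\ref{thm:moment-convergence} then gives $|\E[\#\Sur(\coker(X_n),G)]-1|\le K(e^{-cn}+2^{g_n-n})\to0$, because $n-g_n\to\infty$. Theorem~\ref{thm:moment-robustness} now yields convergence of $\PP[\coker(X_n)\otimes\Z/a\Z\cong H]$ for every $a$ and $H$, which is weak convergence.

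The main obstacle I expect is bookkeeping rather than mathematics. One must check that the gcd $h$ used in Theorem~\ref{thm:moment-convergence}, which is defined only up to units in $\Z/a\Z$, is a unit for large $n$, and that the generator hypothesis over $\Z/\gcd(a,h)\Z$ is inherited from the hypothesis on $C_n$ over $R$. Both follow from the rank bound mod each prime $q\mid a$, as above.
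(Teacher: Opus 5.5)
Your proposal is correct and takes the same route as the paper, which derives this corollary by combining Theorem~\ref{thm:moment-convergence} (in the case $h=1$) with Lemma~\ref{lem:known-moments} and Theorem~\ref{thm:moment-robustness}. Your extra bookkeeping fills in the step the paper leaves implicit. That bookkeeping consists of reducing mod $a$ and using the rank bound mod each prime $q\mid a$ to show that $h$ is a unit for large $n$, so that the limiting moment is $|\wedge^2 G[h]|=1$.
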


\section{Examples}\label{sect:examples}

The $C$-symmetric condition on its own is fairly restrictive. However, we emphasize that some of the power of Theorem~\ref{thm:moment-convergence} is that its dependence on the precise form of $C$ is very weak. This universality result allows us to determine cokernel distributions for various other random matrix ensembles by randomizing $C$, as the following three examples show.

\begin{example}[Universality for matrices symmetric modulo $f$]\label{ex:symm-mod-h}
The following result has been known to experts for some time, but to our knowledge has yet to be written down explicitly.

Let $R = \Z/a\Z$. Let $\varepsilon > 0$ and $h \mid a$. For each $n$, let $\widetilde{X_n}$ be a random matrix over $R$ obtained by starting with a random $\varepsilon$-balanced symmetric matrix and adding to the $ij$th entry a random variable $hy_{ij}$ for each $i < j$, where the $y_{ij}$ are independent, $\varepsilon$-balanced $R$-valued random variables. The result could be called an ``$\varepsilon$-balanced matrix symmetric modulo $h$''.

It follows from Theorem~\ref{thm:moment-convergence} and some further work that for any finite abelian group $G$ with exponent dividing $a$ we have \[
\lim_{n\to\infty}\E[\#\Sur(\coker(X_n), G)] = |\wedge^2 G[h]|.
\]
We sketch out how this can be verified.

We can describe $X_n$ as a $\varepsilon$-balanced $C_n$-symmetric matrix, where $C_n$ is a random alternating matrix whose $ij$th entry for $i < j$ is $hy_{ij}$. Suppose the cokernel of $C_n/h$ is generated by $g_n$ elements. For a finite abelian group $G$, we have \[
\E[\#\Sur(\coker(X_n), G)] = \left(\frac{a}{h}\right)^{-\binom{n}{2}}\sum_{\substack{C\text{ alternating}\\ h \mid C}}\E\left[\#\Sur(\coker(X_n), G)\ \middle|\ C_n = C\right]
\]
Nguyen and Wood \cite{nguyenLocalGlobalUniversality2025} studied the cokernel distribution of random alternating integer matrices. In particular, they show that the cokernels of random $\varepsilon$-balanced alternating matrices over $R/R[h] \cong h\Z/a\Z$ have limiting distributions depending on the parity of the matrix sizes. In particular, in the limit, the torsion part of $\coker(C_n/h)$ is almost always finite, and the rank of $C_n/h$ is almost always at least $n - 1$ \cite[Theorem 1.13]{nguyenLocalGlobalUniversality2025}. Thus, there is a constant $r$ such that $\lim_{n\to\infty}\PP[g_n > r] = 0$.   

Then we have \begin{align*}
    \left|\E[\#\Sur(\coker(X_n), G)] - |\wedge^2 G[h]|\right|
    &\leq \left(\frac{a}{h}\right)^{-\binom{n}{2}}\sum_{\substack{C\text{ alternating}\\ h \mid C}}\left|\E\left[\#\Sur(\coker(X_n), G)\ \middle|\ C_n = C\right]  - |\wedge^2G[h]|\right|\\
    &\leq \left(\frac{a}{h}\right)^{-\binom{n}{2}}\sum_{\substack{C\text{ alternating}\\ h \mid C \\ g_n(C/h) \leq r}}\left|\E\left[\#\Sur(\coker(X_n), G)\ \middle|\ C_n = C\right]  - |\wedge^2G[h]|\right| \\
    &\quad + \left(\frac{a}{h}\right)^{-\binom{n}{2}}\sum_{\substack{C\text{ alternating}\\ h \mid C \\ g_n(C/h) > r}}\left|\E\left[\#\Sur(\coker(X_n), G)\ \middle|\ C_n = C\right]  - |\wedge^2G[h]|\right|
\end{align*}
where we denote by $g_n(C/h)$ the size of a minimal generating set of $\coker(C/h)$.

To account for the second part of the sum on the right hand side, we notice that if $Y_n$ is any sequence of $\varepsilon$-balanced $C_n$-symmetric matrices, we have $\limsup_{n\to\infty}\E[\#\Sur(\coker(Y_n), G)] \leq |\wedge^2G|$ by Lemma~\ref{lem:reduction}. Then \begin{align*}
\limsup_{n\to\infty} \left(\frac{a}{h}\right)^{-\binom{n}{2}}\sum_{\substack{C\text{ alternating}\\ h \mid C \\ g_n(C/h) > r}}&\left|\E\left[\#\Sur(\coker(X_n), G)\ \middle|\ C_n = C\right]  - |\wedge^2G[h]|\right| \\&\leq \lim_{n\to\infty} \PP[g_n^0 > r](|\wedge^2G| - |\wedge^2G[h]|) = 0.
\end{align*}
In the first sum, we are working exclusively with matrices $C$ such that the cokernel of $C/h$ is generated by at most $n + r - 1$ elements. For these matrices, Theorem~\ref{thm:moment-convergence} gives an upper bound on the summands that depends only on $a, G, \varepsilon, r$. Thus, the first sum converges to 0 as well, and we get \[
\lim_{n\to\infty} \E[\#\Sur(\coker(X_n), G)] = 1
\]
as we wanted.
\end{example}

\begin{example}[Perturbation at a ``general'' small set of entries]
Let $R = \Z_p$, $R = \Z$, or $R = \Z/a\Z$. Let $\varepsilon > 0$. For each $n$, let $Y_n$ be a random $\varepsilon$-balanced symmetric matrix.

Let $k_n$ be such that $\lim_{n\to\infty} k_n = \infty$. For each $n$, fix $k_n$ entries of $Y_n$ that do not share rows or columns, and let $X_n$ be the random matrix obtained from $Y_n$ by adding some fixed units to each of these $k_n$ entries.

Let $C_n = X_n - X_n^\intercal$. This is a deterministic alternating matrix which has units in $2k_n$ entries and zeroes elsewhere, and $X_n$ is $C_n$-symmetric. The cokernel of $C_n$ is generated by $g_n = n - 2k_n$ elements, and $n - g_n =  2k_n\to \infty$. Thus, the moments of $\coker(X_n)$ agree in the limit with those of the cokernels of $\varepsilon$-balanced $n\times n$ matrices over $R$.
\end{example}

\begin{example}[Symmetric matrices with non-symmetric corners]\label{ex:random-C}
Let $R = \Z/a\Z$. Let $\widetilde{C}_n$ be drawn from the uniform distribution on the group of strictly upper-triangular $n\times n$ matrices over $R$ with entries outside the upper-left $k_n \times k_n$ corner equal to zero. Let $C_n = \widetilde{C}_n + \widetilde{C}_n^\intercal$.

Let $\widetilde{X}_n$ be drawn from the uniform distribution on symmetric $n \times n$ matrices over $R$, and let $X_n = \widetilde{X}_n + \widetilde{C}_n$.

Then $X_n$ is a matrix uniformly random among matrices which are symmetric ``outside the upper-left $k_n \times k_n$ corner'', i.e., among matrices with symmetry conditions imposed only on entries outside of this corner. Moreover, $X_n$ is $C_n$-symmetric. We will show that, as long as 
\[
\lim_{n\to\infty} k_n = \infty,
\]
we have \[
\lim_{n\to\infty}\E[\#\Sur(\coker(X_n), G)] = 1
\]
for all finite abelian groups $G$ of exponent dividing $a$.

As a consequence, if instead $X_n$ is drawn from the additive Haar measure on matrices over $\Z_p$ which are symmetric outside the upper-left $k_n \times k_n$ corner, this calculation will show that $\coker(X_n)$ converges weakly to $\Gamma_{CL}^{(p, 0)}$ provided $k_n$ grows fast enough.

For a finite abelian group $G$ of exponent dividing $a$, we have \[
\E[\#\Sur(\coker(X_n), G)] = a^{-\binom{n}{2}}\sum_{\widetilde{C}\text{ strictly upper triangular}}\E\left[\#\Sur(\coker(X_n), G)\ \middle|\ \widetilde{C}_n = \widetilde{C}\right]
\]
The matrix $C_n$ consists mostly of zeroes, with the upper-left $k_n\times k_n$ corner a uniformly random alternating $k_n\times k_n$ matrix $C_n^0$ over $R$. Then if the cokernel of $C_n^0$ is generated by $g_n^0$ elements over $R$, the cokernel of $C_n$ is generated by $g_n = n - k_n + g_n^0$ elements, and \[
n - g_n = k_n - g_n^0 
\]\details{Nguyen and Wood \cite{nguyenLocalGlobalUniversality2025} studied the cokernel distribution of random alternating integer matrices. In particular, they show that the cokernels of random $\varepsilon$-balanced alternating matrices over $R = \Z/a\Z$ have limiting distributions depending on the parity of the matrix sizes. In particular, in the limit, the torsion part of $\coker(C_n^0)$ is almost always finite, and the rank of $C_n^0$ is almost always at least $k_n - 1$ \cite[Theorem 1.13]{nguyenLocalGlobalUniversality2025}. Thus, there is a constant $r$ such that $\lim_{n\to\infty}\PP[g_n^0 > r] = 0$.

Then we have \begin{align*}
    \left|\E[\#\Sur(\coker(X_n), G)] - 1\right|
    &\leq a^{-\binom{n}{2}}\sum_{\widetilde{C}\text{ strictly upper triangular}}\left|\E\left[\#\Sur(\coker(X_n), G)\ \middle|\ \widetilde{C}_n = \widetilde{C}\right] - 1\right| \\
    &\leq a^{-\binom{n}{2}}\sum_{\substack{\widetilde{C}\text{ strictly upper triangular} \\ g_n^0(\widetilde{C}) \leq r}}\left|\E\left[\#\Sur(\coker(X_n), G)\ \middle|\ \widetilde{C}_n = \widetilde{C}\right] - 1\right| \\
    &\qquad + a^{-\binom{n}{2}}\sum_{\substack{\widetilde{C}\text{ strictly upper triangular} \\ g_n^0(\widetilde{C}) > r}}\left|\E\left[\#\Sur(\coker(X_n), G)\ \middle|\ \widetilde{C}_n = \widetilde{C}\right] - 1\right|
\end{align*}
To account for the second part of the sum on the right hand side, we notice that if $Y_n$ is any sequence of $\varepsilon$-balanced $\widetilde{C}_n$-symmetric matrices, we have $\limsup_{n\to\infty}\E[\#\Sur(\coker(Y_n), G)] \leq |\wedge^2G|$ by Lemma~\ref{lem:reduction}. Then \begin{align*}
\limsup_{n\to\infty} a^{-\binom{n}{2}}\sum_{\substack{\widetilde{C}\text{ strictly upper triangular} \\ g_n^0(\widetilde{C}) > r}}&\left|\E\left[\#\Sur(\coker(X_n), G)\ \middle|\ \widetilde{C}_n = \widetilde{C}\right] - 1\right| \\&\leq \lim_{n\to\infty} \PP[g_n^0 > r](|\wedge^2G| - 1) = 0.
\end{align*}
In the first sum, we are working exclusively with matrices $\widetilde{C}$ such that the cokernel of $\widetilde{C} + \widetilde{C}^\intercal$ is generated by at most $n - k_n + r - 1$ elements. For these matrices, Theorem~\ref{thm:moment-convergence} gives an upper bound on the summands that depends only on $a, G, \varepsilon, r$, and the sequence $k_n$. Thus, the first sum converges to 0 as well, and we get \[
\lim_{n\to\infty} \E[\#\Sur(\coker(X_n), G)] = 1
\]
as we wanted.}
An argument that is nearly identical to that in Example~\ref{ex:symm-mod-h} demonstrates that, since a random alternating matrix has a cokernel of small rank with very high probability, the primary contribution to $\E[\#\Sur(\coker(X_n), G)]$ comes from the cases where $\E\left[\#\Sur(\coker(X_n), G)\ \middle|\ \widetilde{C}_n = \widetilde{C}\right]$ is controlled by Theorem~\ref{thm:moment-convergence}.
\end{example}

A similar approach can be used to determine the limiting distributions for cokernels of $C$-symmetric matrices where $C$ is allowed to come from much more general models of random alternating matrices.

\subsection*{Acknowledgements}
The author was supported by an NSF Graduate Research Fellowship. The author thanks Mengzhen (August) Liu, Carlo Pagano, and Melanie Matchett Wood for valuable conversations. The author also thanks Yifeng Huang and Roger van Peski for helpful comments on an earlier version of this paper.

\printbibliography
\end{document}